\numberwithin{equation}{section}
\renewcommand{\phi}{\varphi}
\newcommand{\bqn}{\begin{equation*}}
\newcommand{\eqn}{\end{equation*}}
\newcommand{\bq}{\begin{equation}}
\newcommand{\eq}{\end{equation}}
\newcommand{\ba}{\begin{aligned}}
\newcommand{\ea}{\end{aligned}}
\newcommand{\be}{\begin{enumerate}}
\newcommand{\ee}{\end{enumerate}}
\newtheorem{lemma}{Lemma}[section]
\newtheorem{thm}[lemma]{Theorem}
\newtheorem{prop}[lemma]{Proposition}
\newtheorem{cor}[lemma]{Corollary}
\theoremstyle{definition}
\newtheorem{defn}[lemma]{Definition}
\newtheorem{example}[lemma]{Example}
\newtheorem{rem}[lemma]{Remark}
\newcommand{\R} {\ensuremath {\mathbb{R}}}
\newcommand{\eu}{\ensuremath{\mathrm{eu}}}
\newcommand{\omeo}{\ensuremath{{\rm Homeo}^+(S^1)}}
\newcommand{\G}{\ensuremath {\Gamma}}
\newcommand{\Z}{\mathbb Z}
\author{Michelle Bucher}
\author{Roberto Frigerio}
\author{Tobias Hartnick}
\date{May 2016}
\title{A note on semi-conjugacy for circle actions}
\begin{document}
\maketitle
\begin{abstract} We define a notion of semi-conjugacy 
between orientation-preserving actions of a group on the circle, 
which for fixed point free actions coincides 
with a classical definition of Ghys.
We then show that two circle actions are semi-conjugate if and only if they have the same bounded Euler class. This settles some existing confusion present in the literature.
\end{abstract}
\section{Introduction}

A fundamental problem in one-dimensional dynamics is the classification of group actions on the circle. More precisely, denote by $\omeo$ the group of orientation-preserving homeomorphisms of the circle. Given a group $\G$, 
we will refer to a homomorphism $\rho: \G \to \omeo$ as a \emph{circle action}. One would like to associate to every circle action of $\G$ a family of invariants which classify the action up to a suitable equivalence relation, 
ideally up to conjugacy. For the case of a single transformation acting minimally on the circle, this problem was solved by Poincar\'e around the end of the 19th century, using his theory of rotation number \cite{Poi1, Poi2}.

In \cite{Ghys0,Ghys2} \'Etienne Ghys introduced and studied a far reaching generalization of the rotation number, the bounded Euler class of a circle action. For \emph{minimal} actions, i.e. actions for which every orbit is dense, 
he thereby achieved a complete classification result:
\begin{thm}[{\cite[Theorem 6.5]{Ghys2}}]\label{GhysMinimal}
Let $\rho_1, \rho_2: \Gamma \to \omeo$ be minimal circle actions. Then $\rho_1$ and $\rho_2$ are conjugate if and only if they have the same bounded Euler class.
\end{thm}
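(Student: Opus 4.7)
The plan is to handle the two directions separately. Conjugacy implies equality of bounded Euler classes in full generality (no minimality needed): a conjugating $h\in\omeo$ admits a lift $\widetilde h\in\omeot$, and this lift provides a bounded $0$-cochain whose coboundary identifies the pullback integer-valued Euler cocycles representing $e_b(\rho_1)$ and $e_b(\rho_2)$.

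For the substantive direction, assume $e_b(\rho_1)=e_b(\rho_2)$. First I would choose section-lifts $\widetilde\rho_i\colon\Gamma\to\omeot$ so that the associated integer-valued bounded Euler cocycles coincide on the nose. This is possible because two cocycles representing the same bounded class differ by a bounded coboundary, which may be absorbed into the sections defining the lifts (shifting each $\widetilde\rho_i(g)$ by a bounded integer correction). Next, fix $\tilde x_0\in\R$ and consider the partial map
$\widetilde h_0\colon \widetilde\rho_1(\Gamma)\tilde x_0 \to \widetilde\rho_2(\Gamma)\tilde x_0$ sending $\widetilde\rho_1(g)\tilde x_0\mapsto\widetilde\rho_2(g)\tilde x_0$. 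The key claim is that equality of the two cocycles forces $\widetilde h_0$ to be weakly order-preserving and to commute with integer translations. Extending by monotonicity (e.g.\ by left-limits on the closure of the orbit) produces a non-decreasing map $\widetilde h\colon\R\to\R$ intertwining $\widetilde\rho_1$ and $\widetilde\rho_2$ and commuting with integer translations, which descends to a degree-one monotone self-map $h$ of $S^1$, i.e.\ a semi-conjugacy in the classical sense.

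The final step invokes minimality to promote $h$ to a homeomorphism. Minimality of $\rho_2$ ensures that the image of $h$ contains a dense $\rho_2$-orbit, hence is dense in $S^1$; a monotone self-map of $S^1$ with dense image cannot have jumps, so $h$ is continuous. Minimality of $\rho_1$ prevents $h$ from being constant on any non-degenerate interval $I$, for by $\Gamma$-equivariance the $\rho_1$-orbit of $I$ is $\rho_1$-invariant and open, hence dense, and so $h$ would be locally constant on a dense open subset, contradicting that it has degree one. Thus $h$ is a continuous, strictly monotone, surjective self-map of $S^1$, i.e.\ an element of $\omeo$ conjugating $\rho_1$ to $\rho_2$.

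The main obstacle is the monotonicity claim for $\widetilde h_0$. This is the technical heart of the theorem and reflects the fundamental principle that the bounded Euler cocycle records the cyclic-order data of orbits: if $\widetilde\rho_1(g)\tilde x_0<\widetilde\rho_1(g')\tilde x_0$ but $\widetilde\rho_2(g)\tilde x_0>\widetilde\rho_2(g')\tilde x_0$, one must derive a discrepancy between the two bounded cocycles, typically by evaluating them on well-chosen pairs of group elements and reading off the reversal of order as an integer defect. This precise translation from bounded cohomological equality to dynamical order-preservation is the insight underlying Ghys's recognition of bounded cohomology as the natural home for the Euler invariant.
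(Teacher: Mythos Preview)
Your outline is essentially correct and follows Ghys's original orbit-map construction, but the paper takes a different route to produce the semi-conjugacy. Rather than defining $\widetilde h_0$ on a single orbit and then verifying monotonicity by hand (which you rightly flag as the technical obstacle), the paper constructs the semi-conjugacy directly as
\[
\widetilde\varphi(x)=\sup_{g\in\widetilde\Gamma}\,\widetilde\rho_1(g)^{-1}\widetilde\rho_2(g)(x),
\]
after arranging that $\widetilde\rho_1^*T=\widetilde\rho_2^*T$ (equality of pulled-back translation quasimorphisms, which is what equality of bounded Euler classes gives via Proposition~\ref{TNumber}). The advantage of this construction is that monotonicity is automatic---a supremum of increasing maps commuting with integer translations is a good lift---and the only thing to check is that the supremum is finite, which follows from the uniform bound $|T(\widetilde\rho_1(g)^{-1}\widetilde\rho_2(g))|\le D(T)$. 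This completely bypasses the combinatorial order-comparison you describe. Your approach is valid and perhaps more conceptual (it makes vivid that the bounded Euler cocycle encodes orbit order), but the sup-construction is technically cleaner.

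The final step---upgrading semi-conjugacy to conjugacy via minimality---is the same in spirit as the paper's Proposition~\ref{dense:orbits}, but your injectivity argument has a small slip. Being locally constant on a \emph{dense} open set does not by itself contradict degree one: the devil's staircase is a continuous degree-one non-decreasing map constant on a dense open set. What minimality actually gives you is stronger: the $\rho_1$-saturation of a nonempty open interval $I$ is not merely dense but equal to all of $S^1$, since every point has dense orbit and hence some iterate lying in $I$. Thus $h$ would be locally constant everywhere, hence constant by connectedness of $S^1$, contradicting surjectivity. With this correction your argument goes through.
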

The bounded Euler class is thus a complete conjugacy-invariant for minimal actions. For non-minimal actions, this result is not true. Instead, non-minimal actions sharing the same bounded Euler class only satisfy
 a weaker equivalence relation. In \cite{Ghys0} Ghys introduced the notion of \emph{semi-conjugacy} between circle actions, which generalizes the notion of conjugacy. With this notion he proved:
\begin{thm}[{\cite[Theorem A1]{Ghys0}}]\label{IntroMain}
Two circle actions $\rho_1, \rho_2: \Gamma \to \omeo$ are semi-conjugate if and only if they have the same bounded Euler class.
\end{thm}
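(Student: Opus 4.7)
The plan is to prove the two implications separately. For the easier direction, assume that $\rho_1$ and $\rho_2$ are semi-conjugate via a map $h: S^1 \to S^1$. I would lift $h$ to a monotone, $\Z$-equivariant map $\widetilde{h}: \R \to \R$, and use it to transport an orbit-cocycle representative of the bounded Euler class of $\rho_2$ to one for $\rho_1$. Since the bounded Euler class of a circle action admits an orbit-cocycle representative relative to any chosen base point on $\R$, this identification at the cocycle level yields equality of the bounded Euler classes in $\hlim^2(\Gamma;\Z)$.

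For the converse, assume that $\rho_1$ and $\rho_2$ have equal bounded Euler class. First I would pass to a common central $\Z$-extension $\widetilde\Gamma$ on which both actions lift to $\omeot$, producing actions $\widetilde\rho_i: \widetilde\Gamma \to \omeot$ on $\R$; the integer-valued $2$-cocycle $c_i$ computed from the chosen lift represents the bounded Euler class of $\rho_i$. The hypothesis then supplies a bounded integer $1$-cochain $b$ with $c_1 - c_2 = \delta b$. Modifying $\widetilde\rho_1$ by composing with the integer translation by $b(\gamma)$ turns $c_1$ into $c_2$ and, more importantly, forces $\widetilde\rho_1(\gamma)(x_0) - \widetilde\rho_2(\gamma)(x_0)$ to be uniformly bounded in $\gamma$ for every base point $x_0 \in \R$.

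With this bounded comparison in hand, I would construct the semi-conjugacy as the monotone envelope of the orbit correspondence. Fix $x_0 \in \R$ and set
\[
\widetilde h(t) = \sup\{\widetilde\rho_2(\gamma)(x_0) : \gamma \in \widetilde\Gamma, \ \widetilde\rho_1(\gamma)(x_0) \leq t\}.
\]
The uniform boundedness above ensures $\widetilde h(t)$ is finite, monotone non-decreasing, and commutes with integer translations. The defining formula directly yields $\widetilde h \circ \widetilde\rho_1(\gamma) = \widetilde\rho_2(\gamma) \circ \widetilde h$, so pushing down to $S^1 = \R/\Z$ produces a monotone degree-one map $h: S^1 \to S^1$ intertwining $\rho_1$ and $\rho_2$.

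The main obstacle I anticipate is not the construction but its compatibility with the paper's \emph{refined} notion of semi-conjugacy, which — as the abstract signals — coincides with Ghys's original definition only for fixed-point-free actions. When some $\widetilde\rho_i(\gamma)$ has fixed points, the envelope $\widetilde h$ may fail to be continuous or fail a strict one-sidedness axiom, so a careful choice between the upper and lower semi-continuous regularizations of $\widetilde h$, together with a separate treatment of points lying outside the closure of any $\widetilde\rho_1$-orbit, is likely needed in order for the output to satisfy the definition used in this paper rather than merely the classical one.
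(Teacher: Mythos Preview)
Your hard direction is essentially the paper's argument (Subsection~4.3): pass to the common central extension, use the translation-number description of $p^*\eu_b$ to force $\widetilde\rho_1(\gamma)(x_0)-\widetilde\rho_2(\gamma)(x_0)$ bounded, then take a monotone envelope. Two remarks. First, your sentence ``modifying $\widetilde\rho_1$ by the integer translation by $b(\gamma)$'' is wrong as stated: $b$ is only a bounded cochain, so $\gamma\mapsto\widetilde\rho_1(\gamma)\cdot i(b(\gamma))$ is \emph{not} a homomorphism, and the equivariance check for your envelope $\widetilde h$ uses the homomorphism property. What actually happens is that $c_1-c_2=\delta b$ with $b$ bounded forces $\widetilde\rho_1^*T_\Z-\widetilde\rho_2^*T_\Z$ to differ from a \emph{homomorphism} $u:\widetilde\Gamma\to\Z$ by a bounded function; one modifies by $u$, not by $b$. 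Second, your last paragraph worries about continuity and regularization, but the paper's definition of semi-conjugacy (Definition~\ref{MainDef}) requires only a good lift---non-decreasing and commuting with integer translations---so no regularization is needed; your envelope already qualifies.

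The genuine gap is in your ``easier'' direction. You propose to transport the orbit cocycle $c_x(g_0,g_1,g_2)=c(\rho_2(g_0)x,\rho_2(g_1)x,\rho_2(g_2)x)$ through $h$ and conclude equality of classes. But the Euler cocycle $c$ of Definition~\ref{def: Euler cocycle} is \emph{not} invariant under arbitrary non-decreasing degree one maps: if $(a,b,c)\in\mathcal O_+$ and $h$ collapses it to $\mathcal O_2$ (i.e.\ $h(a)=h(c)\neq h(b)$), then $c(a,b,c)=0$ while $c(h(a),h(b),h(c))=1$. So the transport fails at the cocycle level unless $h$ is injective on the orbit, which is precisely what you have not arranged. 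The paper's fix is a case split (Theorem~\ref{GhysExplicit}): if either action has a global fixed point then both do and both bounded Euler classes vanish (Corollary~\ref{zeroprop}); otherwise the left-semi-conjugacy $\phi$ is non-constant, and one extracts a $\rho_2(\Gamma)$-invariant set $K\subset S^1$ on which $\phi$ is injective (Proposition~\ref{MainProposition}, implications $(1)\Rightarrow(2)\Rightarrow\cdots\Rightarrow(5)$). Only then does the orbit-cocycle transport go through, by choosing the basepoint $x\in K$.
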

The bounded Euler class which appears in Theorem \ref{GhysMinimal} and Theorem \ref{IntroMain} is an invariant with values in the second bounded cohomology $H^2_b(\Gamma; \Z)$ of $\Gamma$ with $\Z$-coefficients. 
The theory of Ghys developed in \cite{Ghys0, Ghys2} goes far beyond Theorem \ref{IntroMain}. Namely, not only does it parametrize semi-conjugacy classes of circle actions by classes in $H^2_b(\Gamma; \Z)$, but it
 also characterizes exactly which classes in $H^2_b(\Gamma; \Z)$ can be realized by circle actions. This then provides a bijection between semi-conjugacy classes of circle actions and a certain explicit subset of
 $H^2_b(\Gamma; \Z)$. Although we will have nothing to say on this part of the theory in this note, let us at least state the main result:
\begin{thm}[{\cite[Theorem B]{Ghys0}}] Let $\Gamma$ be a discrete countable group and $\beta\in H^2_b(\Gamma,\Z)$. There exists a representation $\rho:\Gamma \rightarrow \omeo$ such that $\beta$ is
 the bounded Euler class of $\rho$ if and only if $\beta$ can be represented by a cocycle taking only the values $0$ and $1$. 
\end{thm}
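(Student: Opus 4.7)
The plan is to establish each implication separately. For the forward direction, consider the central extension $0 \to \Z \to \omeot \to \omeo \to 1$, fix $\tilde 0 \in \R$ above $1 \in S^1$, and take the set-theoretic section $s: \omeo \to \omeot$ sending $f$ to the unique lift $\tilde f$ with $\tilde f(\tilde 0) \in [\tilde 0, \tilde 0 + 1)$. The cocycle $c_\rho(g, h) := s(\rho(g)) s(\rho(h)) s(\rho(gh))^{-1} \in \Z$ represents the bounded Euler class of $\rho$ by definition. Evaluating at $\tilde 0$ gives $c_\rho(g, h) = s(\rho(g))(s(\rho(h))(\tilde 0)) - s(\rho(gh))(\tilde 0)$; since $s(\rho(h))(\tilde 0) \in [\tilde 0, \tilde 0 + 1)$ and $s(\rho(g))$ commutes with integer translations, the first summand lies in $[s(\rho(g))(\tilde 0), s(\rho(g))(\tilde 0) + 1) \subset [\tilde 0, \tilde 0 + 2)$, while the second is in $[\tilde 0, \tilde 0 + 1)$, so the integer $c_\rho(g, h)$ is pinned in $\{0, 1\}$.

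For the converse, suppose $c: \Gamma \times \Gamma \to \{0, 1\}$ is a (normalized) cocycle representing $\beta$. Form the central extension $0 \to \Z \to \tilde\Gamma \to \Gamma \to 1$ with $\tilde\Gamma = \Gamma \times \Z$ and twisted product $(g, m)(h, n) = (gh, m + n + c(g, h))$; let $z = (e, 1)$ generate the central $\Z$. The plan is to produce a group homomorphism $\tilde\rho: \tilde\Gamma \to \omeot$ with $\tilde\rho(z) = T_1$ (translation by $1$). The composition $\pi \circ \tilde\rho \circ \sigma: \Gamma \to \omeo$, where $\pi: \omeot \to \omeo$ is the projection and $\sigma(g) := (g, 0)$ the set-theoretic section, then provides the desired $\rho$; and since $\sigma(g)\sigma(h) = \sigma(gh)\, z^{c(g,h)}$ by the very definition of $\tilde\Gamma$, the bounded Euler cocycle of $\rho$, computed with the section $s$ of the previous paragraph, coincides with $c$ on the nose. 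To produce $\tilde\rho$, one endows $\tilde\Gamma$ with a left-invariant linear order in which $z$ is positive and cofinal, then invokes the classical dynamical realization of countable linearly ordered groups as subgroups of $\text{Homeo}^+(\R)$; after conjugation, $z$ acts as $T_1$ and the image lies in $\omeot$.

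The core technical step is therefore the construction of this left-invariant order on $\tilde\Gamma$, and this is precisely where the $\{0, 1\}$-valued hypothesis on $c$ is used essentially. A natural attempt is the positive cone $P := \{(g, n) : n \ge 1\} \cup \{(g, 0) : g \in P_0\}$ for a subset $P_0 \subset \Gamma \setminus \{e\}$ to be determined. The requirements $P \cdot P \subset P$ and $P \sqcup \{e\} \sqcup P^{-1} = \tilde\Gamma$ translate, via the cocycle identity and the normalization consequence $c(g, g^{-1}) = c(g^{-1}, g)$, into consistent combinatorial constraints on $P_0$ (e.g.\ $g, h \in P_0$ with $c(g, h) = 0$ forces $gh \in P_0$; and if $c(g, g^{-1}) = 1$ then both $g$ and $g^{-1}$ belong to $P_0$) that can be satisfied by induction over an enumeration of $\Gamma$. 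The $\{0, 1\}$-valuedness of $c$ is what rules out pathologies: if $c$ could assume negative values, a product of nominally positive elements could equal $z^{-1}$, destroying the order. Once $\tilde\rho$ has been built, the identification of the bounded Euler cocycle of the resulting $\rho$ with $c$ is immediate from the construction, completing the proof.
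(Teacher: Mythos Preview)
The paper does not prove this theorem. Immediately before stating it, the authors write: ``Although we will have nothing to say on this part of the theory in this note, let us at least state the main result,'' and indeed no proof appears anywhere in the text. So there is nothing to compare your argument against on the paper's side.

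Evaluating your proposal on its own merits: the forward direction is correct, and in fact the paper does carry out exactly this computation (see formula~\eqref{FirstFromulacsigma}), showing that the obstruction cocycle $c_\sigma$ for the section with $E_\sigma=[0,1)$ takes only the values $0$ and $1$; pulling back along $\rho$ gives the claim.

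Your converse follows the outline of Ghys's original argument, but as written it has a genuine gap. The heart of the matter is the construction of a left-invariant total order on $\tilde\Gamma$ in which $z$ is positive and cofinal, and you dispose of this in one clause: the constraints ``can be satisfied by induction over an enumeration of $\Gamma$''. This is precisely the nontrivial content of the theorem. One must show that at each stage of the induction the partial choice of $P_0$ can be extended consistently; this uses the cocycle identity for $c$ in an essential way (to propagate closure under products through the various cases $c(g,h)=0$ or $1$), and it is not automatic from the remarks you list. Without this, the proof is a plausible strategy rather than an argument. A smaller point: your claim that the Euler cocycle of the resulting $\rho$ equals $c$ ``on the nose'' is slightly too strong. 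What the dynamical realization gives you is that $\tilde\rho(g,0)(\tilde 0)$ lies in $(\tilde 0-1,\tilde 0+1)$ (from $z^{-1}<(g,0)<z$), so $g\mapsto\tilde\rho(g,0)$ is a \emph{bounded} section and the associated cocycle is \emph{boundedly} cohomologous to $c$. That suffices, but it is not literal equality with the cocycle coming from the canonical section $s$.
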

Ghys' theory of the bounded Euler class has found applications in many different directions. Recently there has been a revived interest in Theorem \ref{IntroMain}, since it plays a fundamental role in the bounded cohomology approach to higher Teichm\"uller theory (\cite{BIW1, BIW2, BBHIW}).

The beginner in the field who is trying to understand the proof of Theorems \ref{GhysMinimal} and \ref{IntroMain} has to face several challenges which we try to address with this note. 

The first challenge is to understand the notion of bounded Euler class. Like ordinary cohomology, bounded cohomology can be defined either abstractly or through various concrete resolutions. 
In each concrete model the bounded Euler class is represented by a specific cocycle. For example, the proof of Ghys' Theorem makes use of two different incarnations of the bounded Euler class, 
namely the geometric description of the bounded Euler class associated with the $\omeo$-action on $S^1$, and the algebraic description in terms of translation numbers. Neither of these incarnations
 is particularly intuitive at first sight, 
and while it is well-known to the experts that  they represent the same cohomology class under a canonical isomorphism, this does not appear as obvious just by looking at the definitions. 

In our opinion, the most canonical way to define the bounded Euler class is to define it as the bounded lifting obstruction for the central extension corresponding to the universal covering of $\omeo$. 
This is the approach taken in the present note (see Definition \ref{defBddEulerclass}). We then carefully establish that the so-defined class can be represented over the circle by the well-known Euler cocycle 
(Corollary \ref{cor: e as cocycle on S1}) and can also be related to the translation number (Proposition \ref{TNumber}). This then shows in particular the equivalence of the two definitions used in the proof of Ghys' Theorem. 
Yet another characterization of the bounded Euler class in terms of the Sullivan cocycle over the double covering of the circle is given in the appendix. This description is crucial if one wants to extend the notion of
 bounded Euler class to higher dimensions and plays an important role in the study of the cohomology of ${\rm SL}_n(\R)$. It also allows us to give a different (and apparently new) characterization of circle actions with 
vanishing bounded Euler class, hence we include it here.

Once the notion of bounded Euler class is clarified, one needs to understand the notion of semi-conjugacy. 
%Again, there are plenty of different definitions of semi-conjugacy in the literature. If one wants semi-conjugacy to be equivalent to sharing the same bounded Euler class, 
%i.e. if one wants Theorem \ref{IntroMain} to hold, then there can of course only be one ``correct definition''. 
Unfortunately, the original definition in \cite{Ghys0} suffered from a minor inaccuracy, which was corrected in later papers of the author. In the meantime, different authors had developed fixes of their own, leading to a plethora of alternative definitions. Right now the situation seems to be that all of
 these definitions are used simultaneously in the literature without much of a distinction. Several of the most used definitions can be shown to be equivalent and, more importantly, to satisfy Theorem \ref{IntroMain}. However there 
also appear several other definitions of semi-conjugacy in the literature, which are not equivalent and for which Theorem \ref{IntroMain} does not hold. The main goal of this article is to clarify the situation and to compare the 
different definitions. 
%To this aim we first need to fix some notation. From now on we reserve the letter $H$ to denote the group $\omeo$.

All definitions of semi-conjugacy start from the notion of a \emph{non-decreasing degree one map}, i.e. a map $\phi: S^1 \to S^1$ which admits a lift $\widetilde{\phi}: \R \to \R$ (called a \emph{good lift}) such that $\widetilde{\varphi}(x+1)=\widetilde{\varphi}(x)+1$ for every $x\in\R$ and $\widetilde{\varphi}$ is non-decreasing, i.e. $\widetilde{\varphi}(x)\leq \widetilde{\varphi}(y)$ whenever $x\leq y$. (In the body of this text, we will adopt the equivalent but more geometric point of view given in Definition \ref{Noddom}.)  

We emphasize that no continuity requirement is imposed in this definition, and hence the Brouwer-Hopf degree of $\phi$ may not be well-defined. Even if $\varphi$ happens to be continuous, 
it may still be constant and thus of Brouwer-Hopf degree $0$. In general, the Brouwer-Hopf degree of a \emph{continuous} non-decreasing degree one map is either $0$ or $1$ (and it is equal to zero if and only if the map is constant). 
We say that a non-decreasing degree one map $\varphi$ is \emph{upper/lower semi-continuous} if it admits a good lift with the corresponding property.

Now let $H :=\omeo$. We call a non-decreasing degree one map $\varphi: S^1 \to S^1$ a \emph{left-semi-conjugacy} from a circle action $\rho_1: \Gamma \to H$ to a circle action $\rho_2 : \Gamma \to H$ if 
 \[\rho_1(\gamma)\varphi=\varphi\rho_2(\gamma) \quad \text{for every} \quad \gamma\in \G.\]
We then call $\rho_1$ \emph{left-semi-conjugate}\footnote{In \cite{Ghys0} $\rho_1$ is simply called \emph{semi-conjugate} to $\rho_2$, but we would like to emphasize 
here the asymmetry in $\rho_1$ and $\rho_2$.} to $\rho_2$ and $\rho_2$ \emph{right-semi-conjugate} to $\rho_1$. 
\begin{thm}\label{ThmEquivalence} Let $\rho_1: \Gamma \to H$ and $\rho_2 : \Gamma \to H$ be circle actions of the same group $\Gamma$. Then the following are equivalent:
\begin{enumerate}[(i)]
\item$\rho_1$ is both left-semi-conjugate and right-semi-conjugate to $\rho_2$.
\item Either both $\rho_1(\Gamma)$ and $\rho_2(\Gamma)$ do not have a fixed point and $\rho_1$ is left-semi-conjugate to $\rho_2$, or $\rho_1(\Gamma)$ and $\rho_2(\Gamma)$ both have a fixed point.
%\item There exist both a lower semi-continuous left-semi-conjugacy and a lower semi-continuous right-semi-conjugacy from $\rho_1$ to $\rho_2$.
%\item There exist both an upper semi-continuous left-semi-conjugacy and an upper semi-continuous right-semi-conjugacy from $\rho_1$ to $\rho_2$.
\item There exist a left-semi-conjugacy $\phi$ from $\rho_1$ to $\rho_2$ and a $\rho_2(\Gamma)$-invariant subset $K \subset S^1$ such that $\phi|_K$ is injective.
\item There exist a left-semi-conjugacy $\phi$ from $\rho_1$ to $\rho_2$, lifts $\widetilde{\rho_1}(\gamma)$ and $\widetilde{\rho_2}(\gamma)$ for each $\gamma \in \Gamma$ and and a good lift  $\widetilde{\phi}$ of
 $\phi$ such that $\widetilde{\rho_1}(\gamma)\widetilde{\phi}(x) = \widetilde{\phi}(\widetilde{\rho_2}(\gamma)(x))$ for all $\gamma \in \Gamma$ and $x \in \R$.
\item $\rho_1$ and $\rho_2$ have the same bounded Euler class.
\end{enumerate}
All of these conditions remain equivalent if the left-semi-conjugacies in question are required to be either upper semi-continuous or lower semi-continuous.
\end{thm}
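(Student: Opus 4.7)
The plan is to prove the cyclic chain $(v)\Rightarrow(iv)\Rightarrow(iii)\Rightarrow(i)\Rightarrow(ii)\Rightarrow(v)$, verifying along the way that the constructions involved can be realized either upper or lower semi-continuously, which yields the last sentence of the statement. The heart of the argument is $(v)\Rightarrow(iv)$, a sharpening of Ghys' original theorem. Since any two cocycles representing the same class in $H^2_b(\Gamma;\Z)$ differ by the coboundary of a bounded function, I would first choose Borel lifts $\widetilde{\rho}_1,\widetilde{\rho}_2:\Gamma\to\omeot$ of the two actions whose inhomogeneous $\Z$-valued cocycles literally coincide. Fixing a basepoint $x_0\in\R$, I would then attempt to define
\[
\widetilde{\phi}(x)\;:=\;\sup\bigl\{\widetilde{\rho}_1(\gamma)(x_0)\,:\,\gamma\in\Gamma,\ \widetilde{\rho}_2(\gamma)(x_0)\leq x\bigr\},
\]
and show that $\widetilde{\phi}$ is finite, non-decreasing, satisfies $\widetilde{\phi}(x+1)=\widetilde{\phi}(x)+1$, is equivariant in the sense of (iv), and is upper semi-continuous by construction; replacing $\sup$ by $\inf$ (and $\leq$ by $\geq$) produces the lower semi-continuous variant.

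For $(iv)\Rightarrow(iii)$, the required $\rho_2(\Gamma)$-invariant set is the image in $S^1$ of $\widetilde{K}:=\{x\in\R:\widetilde{\phi}^{-1}(\widetilde{\phi}(x))=\{x\}\}$, which is invariant under integer translation and under each $\widetilde{\rho}_2(\gamma)$ because the intertwining relation carries maximal constancy intervals of $\widetilde{\phi}$ to constancy intervals. For $(iii)\Rightarrow(i)$ one constructs a right-semi-conjugacy $\psi$ by inverting $\phi|_K$ on $\phi(K)$ and extending monotonically to $S^1$ via a sup- or inf-formula of the same type (so as to preserve the chosen semi-continuity); equivariance of $\psi$ is forced on $\phi(K)$ by equivariance of $\phi$ and propagates to $S^1$ by monotonicity. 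The implication $(i)\Rightarrow(ii)$ is essentially tautological: a left-semi-conjugacy sends $\rho_2(\Gamma)$-fixed points to $\rho_1(\Gamma)$-fixed points, and a right-semi-conjugacy sends them in the other direction, so the two actions either both have or both lack a global fixed point.

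Finally, $(ii)\Rightarrow(v)$ splits in two cases. When both actions fix a point, each lifts to a homomorphism $\Gamma\to\omeot$ (by requiring the lift to fix a lift of the fixed point), so both bounded Euler classes vanish. When neither does, one starts from a left-semi-conjugacy $\phi$ with any good lift $\widetilde{\phi}$ and any Borel section $\widetilde{\rho}_2$; for each $\gamma\in\Gamma$, both $\widetilde{g}\,\widetilde{\phi}$ and $\widetilde{\phi}\,\widetilde{\rho}_2(\gamma)$ are good lifts of $\rho_1(\gamma)\phi=\phi\rho_2(\gamma)$ (where $\widetilde{g}$ is any lift of $\rho_1(\gamma)$), hence differ by a unique integer translation, and adjusting $\widetilde{g}$ accordingly yields the unique lift $\widetilde{\rho}_1(\gamma)$ satisfying $\widetilde{\rho}_1(\gamma)\widetilde{\phi}=\widetilde{\phi}\,\widetilde{\rho}_2(\gamma)$; a direct two-variable computation then shows that the cocycles of $\widetilde{\rho}_1$ and $\widetilde{\rho}_2$ coincide. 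I expect the main obstacle to be the sup-construction of $\widetilde{\phi}$ in $(v)\Rightarrow(iv)$: finiteness of the supremum and compatibility with the intertwining equation both hinge on equality of the two cocycles, and verifying them requires a careful, Ghys-style use of the cocycle identity. Once this step is in place, the remaining implications become essentially bookkeeping, and the semi-continuity refinement propagates around the cycle by consistently choosing sup- versus inf-formulas.
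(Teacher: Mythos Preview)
Your cyclic strategy is reasonable and close in spirit to the paper, but two of your steps contain genuine gaps.

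\textbf{The step $(iv)\Rightarrow(iii)$.} Your candidate set $\widetilde K=\{x:\widetilde\phi^{-1}(\widetilde\phi(x))=\{x\}\}$ can be empty even when $\phi$ is non-constant. Take $\widetilde\phi(x)=\lfloor 2x\rfloor/2$: this is a good lift of a non-constant map on $S^1$, satisfies $(iv)$ trivially for the identity actions, and yet every fibre is a half-open interval of length $1/2$, so your $\widetilde K=\emptyset$. The paper (Proposition~\ref{MainProposition}, $(4)\Rightarrow(5)$) fixes this by using instead $\widetilde K_-=\{x:x=\inf S_x\}$ or $\widetilde K_+=\{x:x=\sup S_x\}$, where $S_x=\widetilde\phi^{-1}(\widetilde\phi(x))$; since $\R$ is the disjoint union of intervals $S_x$ and not all of them can be open, at least one of $\widetilde K_\pm$ is non-empty. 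Injectivity and $\widetilde\rho_2$-invariance are then checked on $\widetilde K_\pm$ exactly as you intended. Note also that in the constant-$\phi$ case a separate argument is needed (the paper handles this in Corollary~\ref{Bucher}).

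\textbf{The step $(ii)\Rightarrow(v)$.} In the fixed-point-free case you write that $\widetilde g\,\widetilde\phi$ and $\widetilde\phi\,\widetilde\rho_2(\gamma)$ ``are good lifts of the same map, hence differ by a unique integer translation.'' This is false as stated: the paper explicitly warns that a non-decreasing degree one map can have essentially different good lifts not differing by an integer (e.g.\ any constant map), and the difference $n_\gamma(x)=\widetilde g\,\widetilde\phi(x)-\widetilde\phi\,\widetilde\rho_2(\gamma)(x)$ is a priori an integer-valued function of $x$, not a constant. What saves you is that in the fixed-point-free case $\phi$ is necessarily non-constant (its image would otherwise be a $\rho_1$-fixed point), and then one must \emph{prove} that $n_\gamma$ is constant; this is exactly the non-trivial implication $(2)\Rightarrow(3)$ of Proposition~\ref{MainProposition}, and it requires a short but genuine argument comparing values at two points $a_0,b_0$ with $\widetilde\phi(a_0)\neq\widetilde\phi(b_0)$. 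Your direct inversion $(iii)\Rightarrow(i)$ is also shakier than you suggest: the paper instead closes the loop via $(iii)\Rightarrow(v)$ using the cocycle $c_x$ on the circle (Proposition~\ref{MainProposition}, $(5)\Rightarrow(6)$), and then $(v)\Rightarrow(i)$ via the $\sup$-construction $\widetilde\varphi(x)=\sup_{g}\widetilde\rho_1(g)^{-1}\widetilde\rho_2(g)(x)$ over the central extension $\widetilde\Gamma$, which is cleaner than your orbit-sup formula.
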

In this note we will define two circle actions $\rho_1$ and $\rho_2$ to be \emph{semi-conjugate} if they satisfy Condition (i) of the theorem (see Definition \ref{MainDef} below). The equivalence (i)$\Leftrightarrow$(v) 
is then exactly the content of Theorem \ref{IntroMain}. According to the theorem, each of the Conditions (ii)-(iv) could equally well be used as the definition of semi-conjugacy for Theorem \ref{IntroMain} to hold.

Definition (ii) is essentially Ghys' original definition (modulo the necessary correction in the case of fixed points). The case where both $\rho_1(\Gamma)$ and $\rho_2(\Gamma)$ have fixed points is actually equivalent
 to the vanishing of the bounded Euler class. One problem with Definition (ii) is that it is not obvious a priori whether it is an equivalence relation at all. From this point of view, Definition (i) is clearly preferable.
 The (re-)discovery of this ``symmetric'' definition by the second named author was one of our main motivations to write this note. (Later we learned from the referee that this definition already appeared in an old
 manuscript of Takamura \cite{Taka}, which however was never published.) Definition (iii) is due to the first-named author \cite{Bucherweb} and convenient to check in practice, since only one left-semi-conjugacy has to
 be constructed. Definition (iv) was kindly communicated to us by Maxime Wolff \cite{Wolff}.

\begin{rem} As was communicated to us by Ghys and is pretty clear from the proofs in \cite{Ghys0}\label{rem Ghys 1}, what was actually meant is a condition very close to Condition (iv) in Theorem \ref{ThmEquivalence}, which we state as Condition (vi) below. For this we observe that every circle action of $\Gamma$ gives rise
 to a central extension $\Gamma(\rho)$ of $\Gamma$ as follows. Denote by $\widetilde{H}$ the universal covering group of $H$, which is a central $\Z$-extension of $H$ and acts on the real line (see Subsection \ref{SubsecEuler1}). 
Then $\Gamma(\rho)$ is defined as the pullback
\[\begin{xy}\xymatrix{
\Gamma(\rho) \ar[r]^{\widetilde{\rho}} \ar[d] & \widetilde{H}\ar[d]\\
\Gamma \ar[r]^\rho & H.
}\end{xy}\]
We can now state Condition (vi) which is equivalent to (i)-(v) above:

\smallskip

\emph{(vi) There exist an isomorphism $\psi: \Gamma(\rho_1) \to \Gamma (\rho_2)$ and a good lift $\widetilde{\phi}$ of a non-decreasing degree one map such that for all $\gamma \in \Gamma(\rho_1)$ and $x \in \R$,}
\[
\widetilde{\rho_1}(\gamma)\widetilde{\phi}(x) = \widetilde{\phi}(\widetilde{\rho_2}(\psi(\gamma))(x)).
\]
It is obvious that it implies Condition (iv) of Theorem \ref{ThmEquivalence}. Condition (vi) has however the slight disadvantage that it requires the corresponding (unbounded) Euler classes to be equal, which is equivalent to the isomorphism between the two central extensions of $\Gamma$. We will  point out in Remark \ref{rem Ghys 2} how Condition (vi) immediately follows from Condition (i) of Theorem \ref{ThmEquivalence} based on the proof of Part (i) of Theorem \ref{GhysExplicit}.
\end{rem}

Having stated a number of equivalent definitions of semi-conjugacy, let us now point out a number of definitions we found in the literature, which are \emph{not} equivalent to the definitions above.
 For a more detailed discussion including various concrete counterexamples see Remark \ref{ProblemGhys} below. Most importantly, the fact that $\rho_1$ is left-semi-conjugate to $\rho_2$ by itself 
does not imply semi-conjugacy. In fact, left-semi-conjugacy is not even an equivalence relation, since the trivial action is left-semi-conjugate to every circle action. This problem can also not be remedied by replacing 
left-semi-conjugacy by the equivalence relation it generates, since the latter relation is just the trivial relation in which any two circle actions are related, nor by excluding constant semi-conjugacies,
 since these are necessary for Theorem \ref{IntroMain} to hold.

However, it is rather remarkable that for \emph{fixed point free circle actions} all these problems disappear completely. 
 In fact, as an immediate consequence of Theorem~\ref{ThmEquivalence} we have the following:
 %Firstly, for such actions left-semi-conjugacy is indeed an equivalence relation and 
%equivalent to having the same bounded Euler class, as follows from the equivalence (ii) $\Leftrightarrow$ (v) in Theorem \ref{ThmEquivalence}. Secondly, in the fixed point free case existence
% of a left-semi-conjugacy acutally implies existence of a \emph{continuous} left-semi-conjugacy by a theorem of Takamura \cite{Taka}, and any such continuous left-semi-conjugacy between fixed point 
%free actions automatically has Brouwer-Hopf degree $1$. To summarize:

\begin{cor}\label{fpfree} If $\rho_1: \Gamma \to H$ and $\rho_2 : \Gamma \to H$ are fixed point free circle actions of the same group $\Gamma$, then the following are equivalent:
\begin{enumerate}
\item $\rho_1$ is semi-conjugate to $\rho_2$.
\item $\rho_1$ is left-semi-conjugate to $\rho_2$.
\item $\rho_1$ is right-semi-conjugate to $\rho_2$.
\item $\rho_1$ and $\rho_2$ have the same bounded Euler class.
\end{enumerate}
\end{cor}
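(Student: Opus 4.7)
The plan is to derive the corollary directly from Theorem \ref{ThmEquivalence}, using the fixed point free hypothesis to strip down condition (ii) to a bare one-sided semi-conjugacy statement.

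First, the equivalence (1) $\Leftrightarrow$ (4) is literally the equivalence (i) $\Leftrightarrow$ (v) of Theorem \ref{ThmEquivalence}, which applies to any pair of circle actions and in particular to fixed point free ones. So there is nothing to do here except cite.

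Next, (1) $\Rightarrow$ (2) and (1) $\Rightarrow$ (3) are trivial by unpacking Definition \ref{MainDef} (i.e. Condition (i) of Theorem \ref{ThmEquivalence}): being semi-conjugate means being both left- and right-semi-conjugate.

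The substantial directions are (2) $\Rightarrow$ (1) and (3) $\Rightarrow$ (1), and both reduce to condition (ii) of Theorem \ref{ThmEquivalence}. For (2) $\Rightarrow$ (1), assume $\rho_1$ is left-semi-conjugate to $\rho_2$. Since by hypothesis both $\rho_1(\Gamma)$ and $\rho_2(\Gamma)$ act without global fixed point, the pair $(\rho_1,\rho_2)$ falls into the first alternative of Condition (ii) of Theorem \ref{ThmEquivalence}. The implication (ii) $\Rightarrow$ (i) then yields that $\rho_1$ and $\rho_2$ are semi-conjugate. For (3) $\Rightarrow$ (1), observe that $\rho_1$ right-semi-conjugate to $\rho_2$ is by definition the same as $\rho_2$ left-semi-conjugate to $\rho_1$. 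Applying the previous argument with the roles of $\rho_1$ and $\rho_2$ reversed gives that $\rho_2$ and $\rho_1$ are semi-conjugate; since Condition (i) of Theorem \ref{ThmEquivalence} is symmetric in the two actions, this is the same as (1).

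There is no genuine obstacle: the entire content sits in Theorem \ref{ThmEquivalence}, and the only subtlety is making sure that semi-conjugacy (Condition (i)) is manifestly symmetric so that (3) $\Rightarrow$ (1) goes through without extra work. The value of the corollary lies less in its proof than in the cleaner picture it provides in the fixed point free setting, where the various one-sided and two-sided notions collapse into a single equivalence relation detected by the bounded Euler class.
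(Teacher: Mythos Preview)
Your proof is correct and follows exactly the approach the paper intends: the corollary is stated there as ``an immediate consequence of Theorem~\ref{ThmEquivalence}'', and you have correctly unpacked which implications of that theorem are used, including the symmetry argument for $(3)\Rightarrow(1)$.
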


This corollary is the reason why the wrong definitions in the literature are in most cases rather innocuous. Another issue concerning the definition of semi-conjugacy concerns the regularity of the non-decreasing degree one maps involved. As stated in Theorem \ref{ThmEquivalence}, if $\rho_1$ and $\rho_2$ are
 semi-conjugate circle actions, then one can find an upper semi-continuous left-semi-conjugacy from $\rho_1$ to $\rho_2$ (and vice versa). However, one can in general \emph{not} find a continuous left-semi-conjugacy
 from $\rho_1$ to $\rho_2$.  Nevertheless, semi-conjugacy may be defined via the use of continuous maps of Hopf-Brouwer degree $1$ rather than 
 (possibly non-continuous) non-decreasing degree one map as follows:
 \begin{thm}[\cite{Calegari_Book}]\label{Cal} Semi-conjugacy is the equivalence relation generated by continuous left-semi-conjugacies of Brouwer-Hopf degree $1$.
\end{thm}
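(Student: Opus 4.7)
The plan is to establish both inclusions of equivalence relations. Every continuous, Brouwer--Hopf degree-one, left-semi-conjugacy already implements a semi-conjugacy; conversely, given any semi-conjugacy I build an auxiliary circle action $\rho_\ast$ from which both $\rho_1$ and $\rho_2$ receive continuous degree-one left-semi-conjugacies, so that $\rho_1$ and $\rho_2$ lie in a common equivalence class of the generated relation.

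\emph{Continuous left-semi-conjugacies yield semi-conjugacies.} Suppose $\phi\colon S^1\to S^1$ is continuous, non-decreasing, of Brouwer--Hopf degree~$1$, and satisfies $\rho_1(\gamma)\phi=\phi\rho_2(\gamma)$. Then $\phi$ is surjective and each non-empty fibre $\phi^{-1}(y)$ is either a singleton or a closed arc of positive length; since distinct non-degenerate fibres are pairwise disjoint closed arcs in $S^1$, only countably many of them are non-degenerate. Hence
\[
K:=\{x\in S^1 : \phi^{-1}(\phi(x))=\{x\}\}
\]
is non-empty (even uncountable), and $\phi|_K$ is injective by definition. Equivariance of $\phi$ gives $\phi^{-1}(\phi(\rho_2(\gamma)x))=\rho_2(\gamma)\phi^{-1}(\phi(x))=\{\rho_2(\gamma)x\}$ for every $x\in K$, so $K$ is $\rho_2(\Gamma)$-invariant. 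Condition (iii) of Theorem~\ref{ThmEquivalence} now certifies that $\rho_1$ and $\rho_2$ are semi-conjugate, and since semi-conjugacy is already an equivalence relation it contains the relation generated by continuous degree-one left-semi-conjugacies.

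\emph{Semi-conjugate actions factor through an auxiliary action.} Assume now $\rho_1$ and $\rho_2$ are semi-conjugate. Condition (iv) of Theorem~\ref{ThmEquivalence} supplies lifts $\widetilde{\rho_1}(\gamma),\widetilde{\rho_2}(\gamma)\in\widetilde{H}$ and a good lift $\widetilde{\phi}\colon\R\to\R$ of some left-semi-conjugacy satisfying $\widetilde{\rho_1}(\gamma)\widetilde{\phi}=\widetilde{\phi}\,\widetilde{\rho_2}(\gamma)$. I form the \emph{completed graph}
\[
\widetilde{\mathcal{G}}:=\bigl\{(x,y)\in\R^2 : \widetilde{\phi}(x^-)\le y\le\widetilde{\phi}(x^+)\bigr\}\subset\R^2,
\]
which automatically contains all horizontal plateaus of $\widetilde{\phi}$ and fills a vertical segment over each of its jumps. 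The componentwise order on $\R^2$ restricts to a total order on $\widetilde{\mathcal{G}}$, and the sum $s\colon(x,y)\mapsto x+y$ is a strictly increasing continuous bijection $\widetilde{\mathcal{G}}\to\R$, hence a homeomorphism; under $s$ the diagonal $\Z$-action $(x,y)\mapsto(x+1,y+1)$ becomes translation by~$2$, so $\mathcal{G}:=\widetilde{\mathcal{G}}/\Z$ is homeomorphic to $S^1$. The diagonal $\Gamma$-action $\gamma\cdot(x,y):=\bigl(\widetilde{\rho_2}(\gamma)x,\widetilde{\rho_1}(\gamma)y\bigr)$ preserves $\widetilde{\mathcal{G}}$ by equivariance of $\widetilde{\phi}$, is order-preserving, and commutes with the $\Z$-action; it therefore descends to a circle action $\rho_\ast\colon\Gamma\to\omeo$ on $\mathcal{G}$. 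The coordinate projections $\pi_1,\pi_2\colon\widetilde{\mathcal{G}}\to\R$ are continuous, non-decreasing, and surjective (surjectivity of $\pi_2$ uses precisely the vertical jump segments), so they descend to continuous non-decreasing maps $\overline{\pi_1},\overline{\pi_2}\colon\mathcal{G}\to S^1$ of Brouwer--Hopf degree~$1$. By construction $\overline{\pi_1}$ is a left-semi-conjugacy from $\rho_2$ to $\rho_\ast$ and $\overline{\pi_2}$ a left-semi-conjugacy from $\rho_1$ to $\rho_\ast$, so $\rho_1$ and $\rho_2$ lie in a common equivalence class.

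The hard part will be the topological bookkeeping for the completed-graph construction: checking that $\widetilde{\mathcal{G}}$ with its subspace topology from $\R^2$ is genuinely homeomorphic to $\R$ via $s$ (this reduces to closedness of $\widetilde{\mathcal{G}}$ together with a short argument with one-sided limits of $\widetilde{\phi}$), that both the $\Gamma$- and $\Z$-actions act by continuous order-preserving bijections so that $\rho_\ast$ truly lies in $\omeo$, and that after passing to the quotient the projections $\overline{\pi_i}$ remain surjective non-decreasing maps of Brouwer--Hopf degree~$1$. Once this framework is in place, equivariance of the $\overline{\pi_i}$ and the concluding zig-zag $\rho_1\sim\rho_\ast\sim\rho_2$ are formal.
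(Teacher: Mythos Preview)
Your proof is correct and takes a genuinely different route from the paper's.

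For the easy direction, the paper simply notes that a continuous degree-$1$ left-semi-conjugacy is non-constant and invokes the implication $(2)\Rightarrow(6)$ of Proposition~\ref{MainProposition}. Your argument via the set $K$ of singleton fibres and condition~(iii) of Theorem~\ref{ThmEquivalence} is equally valid and perhaps more transparent.

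For the hard direction the two arguments diverge substantially. The paper proceeds by a case distinction: if neither action has a finite orbit, it uses the classical trichotomy to collapse each action onto a minimal one via a continuous degree-$1$ map (Proposition~\ref{nofinite}), and then invokes the fact that semi-conjugate minimal actions are conjugate; if one action has a finite orbit, it builds by hand a circle $X$ by alternating the complementary intervals of the two finite orbits and defines $\rho_3$ piecewise on these intervals. Your completed-graph construction handles both cases uniformly: starting from the equivariant good lift $\widetilde\phi$ supplied by condition~(iv), you form the filled graph $\widetilde{\mathcal G}$ in $\R^2$, observe that $s(x,y)=x+y$ identifies it with $\R$, and project onto each factor to get continuous surjective degree-$1$ semi-conjugacies from a single auxiliary action $\rho_\ast$ to both $\rho_1$ and $\rho_2$. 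The key algebraic point you should make explicit is why the diagonal $\Gamma$-action descends to a genuine \emph{homomorphism} $\rho_\ast$: the lifts $\widetilde{\rho_j}(\gamma)$ from condition~(iv) need not be group homomorphisms, but the intertwining relation $\widetilde{\rho_1}(\gamma)\widetilde\phi=\widetilde\phi\,\widetilde{\rho_2}(\gamma)$ forces the integer defects $\widetilde{\rho_j}(\gamma_1\gamma_2)-\widetilde{\rho_j}(\gamma_1)\widetilde{\rho_j}(\gamma_2)$ to coincide for $j=1,2$, so the error lies in the diagonal $\Z$ and disappears in $\mathcal G$.

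What each approach buys: the paper's argument is more hands-on and makes the finite-orbit case completely explicit, but requires the trichotomy and the minimal-case conjugacy result as input. Your construction is slicker and self-contained once condition~(iv) is available, avoiding any appeal to minimality; it is also the approach that generalises best (this is essentially the ``monotone-equivalence via graph'' trick used elsewhere in one-dimensional dynamics). The bookkeeping you flag as ``the hard part'' is indeed routine: closedness of $\widetilde{\mathcal G}$ follows from the one-sided-limit description, strict monotonicity and surjectivity of $s$ from elementary order arguments, and the degree-$1$ property of the projections from the fact that both $\pi_1$ and $\pi_2$ intertwine the diagonal $\Z$-translation with the standard one.
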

Note that what we call a ``left-semi-conjugacy via a continuous map of Brouwer-Hopf degree 1'' here, is simply called a \emph{semi-conjugacy} in \cite{Calegari_Book}, conflicting with our terminology. On the other hand, the equivalence relation generated by continuous left-semi-conjugacies of Brouwer-Hopf degree $1$ which is equivalent to what we call ``semi-conjugacy'' is called \emph{monotone equivalence} in  \cite{Calegari_Book}. 
 
%Therefore, semi-conjugacy is the smallest equivalence relation containing continuous left-semi-conjugacy of Brouwer-Hopf degree $1$, while Theorem~\ref{ThmEquivalence}
%implies that semi-conjugacy is the biggest equivalence relation contained in left-semi-conjugacy by generic non-decreasing degree one maps.

%\todo{I am currently writing the proof of this theorem. Roberto}

The rough outline of this note is as follows: In Section \ref{SecDef} we discuss the symmetric definition of semi-conjugacy stated as Definition (i) in Theorem \ref{ThmEquivalence}. 
In particular, we discuss the geometry of non-decreasing degree one maps and various pitfalls of the definition. Section \ref{SecBEC} is then devoted to the discussion of the bounded Euler class alluded to earlier. 
In particular, we discuss thoroughly three well-known characterizations of the bounded Euler class on $\omeo$ and establish carefully their mutual equivalence (Definition \ref{defBddEulerclass}, Proposition \ref{TNumber} 
and Corollary \ref{cor: e as cocycle on S1}). 

Section \ref{SecGhysThm} is the core of this note. Here we establish Theorem \ref{IntroMain} for our symmetric definition of semi-conjugacy (i.e. the equivalence (i)$\Leftrightarrow$(v) in Theorem \ref{ThmEquivalence}). 
It turns out that the argument for fixed point free actions and for actions with fixed points is quite different. Thus we first establish in Subsection \ref{SubsecVanishing} that a circle action has a fixed point if and only 
if it has vanishing bounded Euler class, and that this corresponds precisely to being semi-conjugate to the trivial circle action. This reduces the proof of Theorem \ref{IntroMain} to the case of fixed point free actions. 
For such actions we then establish that they are left-semi-conjugate if and only if they have the same bounded Euler class. This proves Theorem \ref{IntroMain} and at the same time yields 
the equivalences (i)$\Leftrightarrow$(ii)$\Leftrightarrow$(v) in Theorem \ref{ThmEquivalence}. 
%Our proof in the fixed point free case follows essentially Ghys original argument. For an alternative approach, due originally 
%to Thurston \cite{Thurston}, see \cite{Calegari_Book}. 

Once Theorem \ref{IntroMain} is established, Theorem \ref{GhysMinimal} follows easily. This is explained in the final Subsection \ref{Section Min Actions}, 
of Section \ref{SecGhysThm}. 

In Section \ref{SecVariations} we collect various consequences of Ghys' Theorem. Firstly, we explain how Poincar\'e's classification of $\Z$-actions on the circle can be considered as a special case of Ghys' Theorem. Secondly, 
we deduce from Ghys' Theorem that every action of an amenable group on the circle is semi-conjugate to an action by rotations, a result commonly attributed to Hirsch and Thurston (see~\cite{HirschThurston} and~\cite[Theorem 2.79]{Calegari_Book}). 
Finally, we characterize circle actions 
with vanishing real bounded Euler class. The final Section \ref{SecEquivalence} is devoted to the proofs of Theorem \ref{ThmEquivalence}, Corollary \ref{fpfree} and Theorem \ref{Cal}. Finally, in the appendix, 
we discuss the pullback of the Euler class to the double covering group of $\omeo$. We show that this pullback can be represented by a multiple of the so-called Sullivan cocycle which has stronger vanishing properties and also 
generalizes nicely to higher dimensions.

Let us emphasize that we do not claim any originality for the proofs of Theorem \ref{GhysMinimal} and Theorem \ref{IntroMain} (whereas we believe Theorem \ref{thm: appendix} to be new). We hope that our presentation will help to make 
Ghys' beautiful theory of the bounded Euler class more accessible.\\

{\bf Acknowledgements.} The authors are indebted to Pierre de la Harpe
for a careful reading of a preliminary version of this note. They also thank  Danny Calegari, \'Etienne Ghys, Kathryn Mann, Daniel Monclair and Maxime Wolff for useful comments. The first author was supported by 
Swiss National Science Foundation project PP00P2-128309/1. The first and second authors are grateful to the American Institute of Mathematics for their support during the preparation of this work. 
The third author acknowledges support from the Taub Foundation within the Leaders in Science and Technology program.

\section{On the definition of semi-conjugacy}\label{SecDef}

\subsection{Non-decreasing degree one maps}\label{NonDecreasingDegreeOneMap}
Throughout this article we consider the circle $S^1 = \R/\Z$ as a quotient of the real line. A pre-image $\widetilde{x}$ of a point $x \in S^1$ under the canonical projection $\R \to S^1$ will be called a \emph{lift} of $x$ and we write $[\widetilde{x}] := x$.
\begin{defn} For $k\in \mathbb{N}$, an ordered $k$-tuple $(x_1,\dots,x_k)\in (S^1)^k$ is said to be
\begin{itemize}
\item  \emph{weakly positively oriented} if there exist lifts $\widetilde{x_i}\in \R$ of the $x_i$'s such that
$$\widetilde{x_1} \leq \widetilde{x_2}\leq \dots \leq \widetilde{x_k} \leq \widetilde{x_1}+1,$$
\item \emph{positively oriented} if furthermore 
$$ \widetilde{x_1} < \widetilde{x_2}<\dots < \widetilde{x_k} < \widetilde{x_1}+1.$$
\end{itemize}
Replacing $\leq$, $<$ and $\widetilde{x_1}, \widetilde{x_1}+1$ respectively by $\geq$, $>$ and $\widetilde{x_1}+1, \widetilde{x_1}$ we obtain the corresponding notion of (weakly) negatively oriented $k$-tuples. 
\end{defn}%qui

Note that if $k\leq 2$ then a $k$-tuple is both weakly positively oriented and weakly negatively oriented. Furthermore,  the property of being (weakly) positively oriented is obviously invariant under cyclic permutations. 

\begin{defn}\label{Noddom}
A (not necessarily continuous) map $\varphi\colon S^1\to S^1$ is a \emph{non-decreasing degree one map} if the following condition holds for all $k \in \mathbb N$: If $(x_1,\ldots,x_k)\in (S^1)^k$ is weakly positively oriented,
 then  $(\varphi(x_1),\dots,\varphi(x_k))$ is weakly positively oriented.
\end{defn}
As we will see in Lemma \ref{quadruple:lem} below it is actually enough to check the condition for $k=4$. Observe that non-decreasing degree one maps are closed under composition and that every constant map is a non-decreasing 
degree one map. 

\begin{defn} Let $\varphi\colon S^1\to S^1$ be any map. A set-theoretical lift $\widetilde{\varphi}\colon \R\to\R$ of $\varphi$ is called a \emph{good lift} of $\varphi$ if $\widetilde{\varphi}(x+1)=\widetilde{\varphi}(x)+1$ 
for every $x\in\R$ and $\widetilde{\varphi}$ is non-decreasing, i.e. $\widetilde{\varphi}(x)\leq \widetilde{\varphi}(y)$ whenever $x\leq y$. 
\end{defn}

By the following lemma, being a non-decreasing degree one map is equivalent to admitting a good lift, so Definition \ref{Noddom} is equivalent to the more classical definition which we used in the introduction. We warn the reader that a non-decreasing degree one map may have infinitely many essentially different good lifts, i.e. good lifts which do not just differ by 
composition with an integral translation. For example, for every $\alpha\in\R$ the maps $x\mapsto \lfloor x+\alpha\rfloor$ and $x\mapsto \lceil x+\alpha\rceil$ are good lifts of the constant map $\varphi\colon S^1\to S^1$ mapping every point to $[0]$.

\begin{lemma}\label{quadruple:lem} Let $\varphi\colon S^1\to S^1$ be any map. Then the following conditions are equivalent:
\begin{enumerate}[(i)]
\item The map $\varphi$ is a non-decreasing degree one map.
\item If $(x_1,\ldots,x_4)\in (S^1)^4$ is weakly positively oriented, then  $(\varphi(x_1),\dots,\varphi(x_4))$ is weakly positively oriented;
\item There exists a good lift of $\varphi$.
\end{enumerate}
\end{lemma}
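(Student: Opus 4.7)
The implication (i) $\Rightarrow$ (ii) is immediate by specialization to $k=4$. For (iii) $\Rightarrow$ (i), given a good lift $\widetilde{\varphi}$ and a weakly positively oriented $k$-tuple $(x_1,\ldots,x_k)$ with lifts $\widetilde{x_1} \leq \cdots \leq \widetilde{x_k} \leq \widetilde{x_1}+1$, monotonicity of $\widetilde{\varphi}$ together with $\widetilde{\varphi}(x+1) = \widetilde{\varphi}(x)+1$ yields $\widetilde{\varphi}(\widetilde{x_1}) \leq \cdots \leq \widetilde{\varphi}(\widetilde{x_k}) \leq \widetilde{\varphi}(\widetilde{x_1}) + 1$, exhibiting $(\varphi(x_1),\ldots,\varphi(x_k))$ as weakly positively oriented.

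The substance of the lemma is (ii) $\Rightarrow$ (iii). My plan is to build a good lift explicitly. Set $p := [0] \in S^1$, fix $\tilde p \in \R$ lifting $\varphi(p)$, and declare $\widetilde{\varphi}(0) := \tilde p$. For each $x \in (0,1)$, applying (ii) to the weakly positively oriented $4$-tuple $(p, [x], p, p)$ (with lifts $0, x, 1, 1$) produces, after an integer translation, a lift of $\varphi([x])$ in $[\tilde p, \tilde p+1]$. When $\varphi([x]) \neq \varphi(p)$ this lift is unique and lies in $(\tilde p, \tilde p+1)$; I take it as $\widetilde{\varphi}(x)$. When $\varphi([x]) = \varphi(p)$, both endpoints $\tilde p$ and $\tilde p+1$ are valid candidates, and I resolve the ambiguity by the rule $\widetilde{\varphi}(x) := \tilde p+1$ precisely when $\varphi([y]) = \varphi(p)$ for every $y \in (x,1)$, and $\widetilde{\varphi}(x) := \tilde p$ otherwise. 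Finally I extend by $\widetilde{\varphi}(x+n) := \widetilde{\varphi}(x)+n$ for $n \in \Z$, which makes the periodicity axiom hold by construction.

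The remaining task is to verify non-decreasingness on $[0,1]$. For $0 \leq x_1 < x_2 < 1$ I would apply (ii) to the $4$-tuple $(p, [x_1], [x_2], p)$ and argue by cases depending on whether each $\varphi([x_i])$ equals $\varphi(p)$. When neither, or exactly one in the appropriate position, equals $\varphi(p)$, monotonicity follows essentially directly from the four-point hypothesis together with the uniqueness of the lift in the open interval and the selection rule. The delicate case is $\varphi([x_1]) \neq \varphi(p) = \varphi([x_2])$, where I must verify that the rule outputs $\widetilde{\varphi}(x_2) = \tilde p+1$. Suppose for contradiction some $y \in (x_2,1)$ satisfies $\varphi([y]) \neq \varphi(p)$; a second application of (ii) to $(p, [x_1], [x_2], [y])$ forces the middle lift of $\varphi([x_2]) = \varphi(p)$ to take one of the values $\tilde p$ or $\tilde p+1$, either of which contradicts one of our hypotheses on $\varphi([x_1])$ or $\varphi([y])$.

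I expect the main obstacle to lie precisely in the coherent selection of lifts on $\{x \in (0,1) : \varphi([x]) = \varphi(p)\}$, where the pointwise information is genuinely ambiguous. Naive choices (for instance, always taking the lower of the two candidates) destroy monotonicity whenever $\varphi$ attains $\varphi(p)$ strictly after some value different from $\varphi(p)$. The ``upper-endpoint'' rule above is essentially forced, and its validity rests on the structural fact—itself a consequence of (ii)—that once $\varphi$ returns to $\varphi(p)$ past a point where it has already taken a different value, it cannot leave $\varphi(p)$ again before the next full turn.
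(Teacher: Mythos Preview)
Your proof is correct, and the implication (ii)$\Rightarrow$(iii) is handled by a genuinely different construction than the paper's.

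The paper treats the constant case separately, then chooses \emph{two} base points $x_0,x_1$ with $\varphi(x_0)\neq\varphi(x_1)$ and defines the lift by a simple half-interval rule: on $[\widetilde{x_0},\widetilde{x_1}]$ lift into $[\widetilde{y_0},\widetilde{y_0}+1)$, and on $[\widetilde{x_1},\widetilde{x_0}+1)$ lift into $(\widetilde{y_0},\widetilde{y_0}+1]$. Monotonicity is then checked by applying (ii) to quadruples of the form $(x_0,\cdot,\cdot,x_1)$ and $(x_0,\cdot,\cdot,x_0)$. Your construction instead uses a single base point $p$ and resolves the endpoint ambiguity on $\varphi^{-1}(\varphi(p))$ by a look-ahead rule; this has the pleasant feature of working uniformly, with no separate constant case. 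The structural fact you isolate---that under (ii), once $\varphi$ returns to $\varphi(p)$ after having left it, it cannot leave again before the next full turn---is exactly the content of the paper's two-base-point argument in disguise, and your derivation of it from the quadruple $(p,[x_1],[x_2],[y])$ is the same mechanism the paper uses. So the two proofs differ in how the lift is packaged, but the essential use of the four-point hypothesis is the same.

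Two small presentational remarks: the appeal to (ii) for the quadruple $(p,[x],p,p)$ is not really needed just to produce \emph{some} lift in $[\tilde p,\tilde p+1]$, since that is automatic for any point of $S^1$; and in the delicate case your phrase ``forces the middle lift \dots\ to take one of the values $\tilde p$ or $\tilde p+1$'' tacitly assumes the normalization $a_0=\tilde p$ of the chain $a_0\le a_1\le a_2\le a_3\le a_0+1$. Neither affects correctness.
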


\begin{proof} The implication (i) $\Rightarrow$ (ii) holds by definition. 

(ii) $\Rightarrow$ (iii): If $\varphi$ is constant, there is nothing to prove. Suppose there exist $x_0\neq x_1\in S^1$ such that $y_0:=\varphi(x_0)\neq \varphi(x_1)=:y_1$. 
Choose lifts $\widetilde{x_0},\widetilde{y_0},\widetilde{x_1},\widetilde{y_1}\in \R$ of $x_0,y_0,x_1,y_1$ respectively such that $\widetilde{x_1}\in (\widetilde{x_0},\widetilde{x_0}+1)$ 
and $\widetilde{y_1}\in (\widetilde{y_0},\widetilde{y_0}+1)$. Now define $\widetilde{\varphi}$ on  $[\widetilde{x_0},\widetilde{x_0}+1)$ as follows: for $\widetilde{x_0}\leq \widetilde{x} \leq \widetilde{x_1}$, 
let  $\widetilde{\varphi}(x)$ be the unique lift of $\varphi([\widetilde{x}])$ lying in $[\widetilde{y_0},\widetilde{y_0}+1)$; for $\widetilde{x_1}\leq \widetilde{x} <\widetilde{x_0}+1$, let $\widetilde{\varphi}(x)$ 
be the unique lift of $\varphi([\widetilde{x}])$ lying in $(\widetilde{y_0},\widetilde{y_0}+1]$. Now extend $\widetilde{\phi}$ to $\R$ in the unique possible way such that it commutes with integral translations.

In order to see that $\widetilde{\phi}$ is non-decreasing it suffices to show that it is non-decreasing on 
 $[\widetilde{x_0},\widetilde{x_0}+1)$. Thus let $\widetilde{x_0}\leq \widetilde{x}<\widetilde{y}< \widetilde{x_0}+1$. 
 
We first prove that if $\widetilde{\phi}(\widetilde{y})=\widetilde{y_0}$, then $\widetilde{\phi}(\widetilde{x})=\widetilde{y_0}$. Indeed 
$\widetilde{\phi}(\widetilde{y})$ can be equal to $\widetilde{y}_0$ only if $\widetilde{y}<x_1$. Thus the quadruple $(x_0, [\widetilde{x}], [\widetilde{y}], x_1)$ is weakly positively oriented, and so is $(\varphi(x_0),\varphi( [\widetilde{x}]),\varphi([\widetilde{y}]),\varphi(x_1)) = (y_0, [\widetilde{\phi}(\widetilde{x})], y_0,y_1)$ by (ii). By definition, this means that 
there exist integers $n_x,n_y,m\in \Z$ such that
 \[ \widetilde{y_0} \leq \widetilde{\varphi}(\widetilde{x})+n_x \leq \widetilde{y_0}+n_y \leq \widetilde{y_1}+m \leq \widetilde{y_0}+1\ .\]
 Since $\widetilde{y_1}\in (\widetilde{y_0},\widetilde{y_0}+1)$ we have $m=0$, and this implies in turn that $n_y=0$, so 
that  $\widetilde{\phi}(\widetilde{x})+n_x=\widetilde{y_0}$. But since $\widetilde{x}<\widetilde{x_1}$ its image by $\widetilde{\varphi}$ lies in $[\widetilde{y_0},\widetilde{y_0}+1)$, so 
  $\widetilde{\phi}(\widetilde{x})=\widetilde{y_0}$, as desired.

  A completely analogous symmetric argument also shows that if $\widetilde{\phi}(\widetilde{x})=\widetilde{y_0}+1$, then  $\widetilde{\phi}(\widetilde{y})=\widetilde{y_0}+1$. Thus we can now restrict to the case where $\widetilde{\phi}(\widetilde{x}),\widetilde{\phi}(\widetilde{y})\in (\widetilde{y_0},\widetilde{y_0}+1)$.
 
 From the assumption that $\widetilde{x_0}\leq \widetilde{x}<\widetilde{y}< \widetilde{x_0}+1$, we obtain that  
the quadruple $(x_0, [\widetilde{x}], [\widetilde{y}], x_0)$ is weakly positively oriented, and thus also the quadruple $(\varphi(x_0),\varphi([\widetilde{x}]),\varphi([\widetilde{y}]),\varphi(x_0)) = (y_0,[\widetilde{\phi}(\widetilde{x})],[\widetilde{\phi}(\widetilde{y})],y_0)$ is weakly positively oriented by (ii). By definition this means that 
there exist  integers $n_x,n_y,m\in \Z$ such that
\[\widetilde{y_0} \leq \widetilde{\varphi}(\widetilde{x})+n_x \leq \widetilde{\varphi}(\widetilde{y})+n_y \leq \widetilde{y_0}+m \leq \widetilde{y_0}+1\ .\]
Since $\widetilde{\varphi}(\widetilde{x})$ and $\widetilde{\varphi}(\widetilde{y})$ are now both belonging to the open interval $(\widetilde{y_0},\widetilde{y_0}+1)$ it follows that $n_x=n_y=0$ (and $m=1$).
We thus obtain $\widetilde{\varphi}(\widetilde{x})\leq  \widetilde{\varphi}(\widetilde{y})$, which finishes the proof of this implication.

(iii) $\Rightarrow$ (i): Let $x_0,\dots,x_k$ be weakly positively oriented. By definition this means that there exist  lifts $\widetilde{x_i}\in \R$ of the $x_i$'s such that
$$\widetilde{x_1} \leq \widetilde{x_2}\leq \dots \leq \widetilde{x_k} \leq \widetilde{x_1}+1.$$
Applying the non-decreasing map $\widetilde{\varphi}$ to the above inequalities gives
$$\widetilde{\varphi}(\widetilde{x_1}) \leq \widetilde{\varphi}(\widetilde{x_2})\dots \leq\widetilde{\varphi}( \widetilde{x_k}) \leq \widetilde{\varphi}(\widetilde{x_1}+1)=\widetilde{\varphi}(\widetilde{x_1})+1,$$
where the last equality uses the fact that $\widetilde{\varphi}$ commutes with integral translations. Since the $\widetilde{\varphi}(x_i)$'s are lifts of $\varphi(x_i)$, this by definition implies that the $k$-tuple $(\varphi(x_1),\dots,\varphi(x_k))$ is weakly positively oriented. 
\end{proof}

It is clear from the proof that we cannot replace the statement in (ii) with the corresponding statement for triples. To give an explicit counterexample, consider the function $\varphi:S^1 \to S^1$ given by
\[
\varphi([t]) = \left\{\begin{matrix}[0], &  t  \in [0,1/4)\cup [1/2,3/4),\\
                                                   [1/2], & t  \in [1/4,1/2)\cup [3/4,1). \end{matrix} \right.
\]
This function $\varphi$ takes any triple into a weakly positively oriented one, but the quadruple $([0],[1/4],[1/2],[3/4])$ is taken by $\varphi$ to $([0],[1/2],[0],[1/2])$, which is not weakly positively oriented. 
%Obviously $\phi$ does not admit any non-decreasing lift commuting with integral translations.

\subsection{Semi-conjugacy} Let us recall the key definition of this note from the introduction.
\begin{defn}\label{semi:def}\label{DefSc} \label{MainDef}
Let $\rho_j\colon \G\to\omeo$ be circle actions, $j=1,2$. We say that $\rho_1$ is \emph{left-semi-conjugate} to $\rho_2$ (and $\rho_2$ is right-semi-conjugate to $\rho_1$) if
there exists a non-decreasing degree one map $\varphi$ such that \[\rho_1(\gamma)\varphi=\varphi\rho_2(\gamma)\] for every $\gamma\in \G$. In this case, $\varphi$ is called a \emph{left-semi-conjugacy} from $\rho_1$ to $\rho_2$ and we say that $\rho_1$ is left-semi-conjugate to $\rho_2$ \emph{via $\varphi$}. 

The circle action $\rho_1$ is called \emph{semi-conjugate} to $\rho_2$ if it is both left- and right-semi-conjugate to $\rho_2$.
\end{defn}

We recall some standard terminology for group actions: A circle action $\rho:\Gamma \rightarrow \omeo$ is said to have a \emph{global fixed point} if there exists $x\in S^1$ such that $\rho(\gamma)(x)=x$ for every $\gamma\in \Gamma$. An action is \emph{fixed point free} if it does not admit a global fixed point. 

\begin{prop}\label{semiconj:rem}
\begin{enumerate}[(i)]
\item Semi-conjugacy is an equivalence relation.
\item Every circle action is right-semi-conjugate to the trivial action.
\item A circle action is left-semi-conjugate to the trivial action if and only if it has a global fixed point.
\end{enumerate}
\end{prop}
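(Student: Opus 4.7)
The plan is to verify each of the three assertions separately, using only the definition of non-decreasing degree one map together with the observation (already noted after Definition \ref{Noddom}) that such maps are closed under composition and that constant maps are non-decreasing degree one maps.

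For part (i), reflexivity of semi-conjugacy will follow immediately by using the identity $\mathrm{id}_{S^1}$ as the required non-decreasing degree one map (with the trivial good lift $\widetilde{\mathrm{id}}(x)=x$). Symmetry is built into the definition: since $\rho_1$ is semi-conjugate to $\rho_2$ means $\rho_1$ is simultaneously left- and right-semi-conjugate to $\rho_2$, which is the same as saying $\rho_2$ is simultaneously right- and left-semi-conjugate to $\rho_1$. For transitivity, if $\rho_1$ is left-semi-conjugate to $\rho_2$ via $\varphi$ and $\rho_2$ is left-semi-conjugate to $\rho_3$ via $\psi$, then $\varphi\psi$ is a non-decreasing degree one map satisfying $\rho_1(\gamma)(\varphi\psi)=\varphi\rho_2(\gamma)\psi=(\varphi\psi)\rho_3(\gamma)$, so $\rho_1$ is left-semi-conjugate to $\rho_3$ via $\varphi\psi$; combining this with the analogous statement for right-semi-conjugacies gives transitivity of semi-conjugacy.

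For part (ii), let $\rho\colon\Gamma\to\omeo$ be any circle action, let $\mathrm{triv}$ denote the trivial action, and fix any $c\in S^1$. The constant map $\varphi_c\colon S^1\to S^1$, $x\mapsto c$, is a non-decreasing degree one map (for instance $\widetilde{\varphi_c}(x)=\lfloor x+\widetilde{c}\rfloor$ is a good lift), and clearly $\mathrm{triv}(\gamma)\varphi_c=\varphi_c=\varphi_c\rho(\gamma)$ for every $\gamma\in\Gamma$. Thus $\mathrm{triv}$ is left-semi-conjugate to $\rho$, i.e. $\rho$ is right-semi-conjugate to $\mathrm{triv}$.

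For part (iii), I first observe that if $\rho$ admits a global fixed point $x_0\in S^1$, then the constant map $\varphi_{x_0}$ satisfies $\rho(\gamma)\varphi_{x_0}(x)=\rho(\gamma)(x_0)=x_0=\varphi_{x_0}\mathrm{triv}(\gamma)(x)$, so $\rho$ is left-semi-conjugate to $\mathrm{triv}$ via $\varphi_{x_0}$. Conversely, if $\rho$ is left-semi-conjugate to $\mathrm{triv}$ via some non-decreasing degree one map $\varphi$, the defining equation reduces to $\rho(\gamma)\varphi=\varphi$ for every $\gamma\in\Gamma$; equivalently, every point of $\varphi(S^1)$ is fixed by $\rho(\Gamma)$. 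Since $\varphi(S^1)$ is nonempty, any of its points furnishes a global fixed point of $\rho$. No serious obstacle is expected; the only minor care needed is in exhibiting explicit good lifts of constant maps to confirm that they meet the definition of a non-decreasing degree one map, which is handled by the formula $\widetilde{\varphi_c}(x)=\lfloor x+\widetilde{c}\rfloor$ noted immediately after Definition \ref{Noddom}.
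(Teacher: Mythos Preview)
Your proof is correct and follows essentially the same approach as the paper's: reflexivity and symmetry are immediate, transitivity uses closure of non-decreasing degree one maps under composition, and parts (ii) and (iii) use constant maps. One small slip: your formula $\widetilde{\varphi_c}(x)=\lfloor x+\widetilde{c}\rfloor$ takes integer values, so it lifts the constant map to $[0]$ rather than to $c$; a correct good lift of $\varphi_c$ is $x\mapsto \widetilde{c}+\lfloor x\rfloor$, though this detail is not needed since the paper already records that every constant map is a non-decreasing degree one map.
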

\begin{proof} (i) Reflexivity and symmetry are obvious,  while transitivity readily follows from the fact that non-decreasing degree one maps are closed under composition. (ii) Choose $\varphi$ to be an arbitrary constant map. (iii) If $\rho$ is left-semi-conjugate to the trivial action, then 
there exists $\varphi$ such that for all $\gamma \in\G$ and $x\in S^1$
\[\rho({\gamma})(\varphi(x))=\varphi(x)\,\]
whence the image of $\varphi$ consists of fixed points of $\rho(\G)$. 
On the other hand, if $x_0$ is fixed by $\rho(\G)$, then $\rho$ is left-semi-conjugate to the trivial action by the constant map $\varphi(x) \equiv x_0$.\end{proof}

\begin{rem}\label{ProblemGhys}
The definition of semi-conjugacy given in \cite{Ghys0} coincides with our definition of left-semi-conjugacy. As it obviously follows from Proposition \ref{semiconj:rem} (ii)-(iii) that left-semi-conjugacy is not even an equivalence relation, it cannot be the correct notion. However, for \emph{fixed point free} circle actions it does indeed coincide with our notion of semi-conjugacy, see Corollary \ref{mistake:small}.

%In a different context, a definition of semi-conjugacy is given in \cite{Ghys2} 

Elsewhere in the literature semi-conjugacy is defined as the existence of a \emph{continuous} left semi-conjugacy $\varphi:S^1\rightarrow S^1$. This is still not symmetric: as we saw in the proof of Proposition~\ref{semiconj:rem} (ii), 
every circle action is right semi-conjugate to the trivial action via a continuous map $\varphi$, while by point (iii) of the same proposition fixed point free actions cannot be left-semi-conjugate to the trivial action. 
%(even via non-continuous non-decrasing degree one maps). 

Since constant left-semi-conjugacies are responsible for both problems, one may be tempted to exclude them from the game. Such a more restrictive definition does indeed appear in the literature, but Theorem \ref{IntroMain} can never hold for such a definition. Namely, it is easy to check that if $\rho_1$ admits a unique global fixed point $x_0$ and $\rho_2$ is the trivial representation, then the constant map with image $\{x_0\}$ 
is the unique left-semi-conjugacy from $\rho_1$ to $\rho_2$. On the other hand $\rho_1$ and $\rho_2$ have the same bounded Euler class (see Corollary~\ref{zeroprop} below), so they need to be semi-conjugate in order for Theorem \ref{IntroMain} to hold.

In some sense, semi-conjugacy in the sense of Definition \ref{MainDef} is the most obvious way to turn left-semi-conjugacy into an equivalence relation. However, contrary to what is sometimes claimed, it is \emph{not} the equivalence relation generated by left-semi-conjugacy. Namely, by Proposition \ref{semiconj:rem} the equivalence relation generated by left-semi-conjugacy is the trivial relation in which any two circle actions are related.

By definition, conjugate circle actions are semi-conjugate. We will see in Proposition \ref{dense:orbits} below that for minimal circle actions the converse holds. However, in general the notion of semi-conjugacy is much weaker than the notion of conjugacy. For example Proposition~\ref{semiconj:rem} shows that every circle action admitting a fixed point is semi-conjugate to the trivial circle action 
(but of course not conjugate to the trivial circle action unless it is trivial itself).
\end{rem}

\section{Three characterizations of the bounded Euler class}\label{SecBEC}

The goal of this section is to introduce the bounded Euler class and provide three different characterizations: as a bounded obstruction class (Subsection \ref{SubsecEuler1}), via the translation number (Subsection \ref{SubsecEuler2}) 
and as a bounded geometric class on the circle (Subsection \ref{SubsecEuler3}). Yet another description of the bounded Euler class, which generalizes readily to higher dimensions, will be discussed in the appendix. In order to keep 
this note self-contained we collect in the next subsection various basic facts concerning (bounded) group cohomology. The expert can skip that subsection without loss of continuity.

\subsection{Preliminaries on (bounded) group cohomology} 
Given a group $H$ acting on a space $X$ we set $\mathcal C^n(H\curvearrowright X; \Z) := {\rm Map}(X^{n+1}; \Z)^H$, where the superscript ${}^H$ denotes $H$-invariants under the diagonal $H$-action, and refer to elements of $\mathcal C^n(H\curvearrowright X; \Z)$ as \emph{homogeneous $H$-cochains} of degree $n$ (or simply a \emph{homogeneous cohain} if $H$ is clear from the context). We then obtain a cocomplex $(\mathcal C^n(H\curvearrowright X; \Z), \delta)$ by defining the \emph{homogeneous differential} $\delta$ as
\[\delta f(x_0,\dots,x_n)=\sum_{i=0}^n (-1)^i f(x_0,\dots,\widehat{x_i},\dots,x_n),\]
whose cohomology we denote by $H^\bullet(H\curvearrowright X; \Z)$. Elements in the kernel, respectively image of $\delta$ are called \emph{homogeneous $H$-cocycles}, respectively \emph{homogeneous $H$-coboundaries}.
If $X = H$ with the left-$H$-action, then the cohomology $H^\bullet(H\curvearrowright X; \Z)$ is precisely the classical group cohomology $H^\bullet(H; \Z)$ with $\Z$-coefficients. Given a homogeneous cocycle $c \in \mathcal C^n(H\curvearrowright X;\Z)$ and a basepoint $x_0 \in X$ we obtain a homogeneous cocycle $c_{x_0} \in \mathcal C^n(H\curvearrowright H;\Z)$ by
\[
c_{x_0}(h_0, \dots, h_n) = c(h_0\cdot x_0, \dots, h_n\cdot x_0).
\]
The class of $c_{x_0}$ is independent of the choice of basepoint $x_0$. We thus obtain a map $\iota_X: H^\bullet(H\curvearrowright X; \Z) \to H^\bullet(H; \Z)$ and we say that a class $\alpha \in H^\bullet(H; \Z)$ is \emph{represented over $X$} if it is in the image of this map.

There is a more efficient representation for classes in $H^\bullet(H; \Z)$ based on the fact that we can identify $\mathcal C^n(H\curvearrowright H; \Z)$ with $C^n(H;\Z) := {\rm Map}(H^n;\Z)$ via the isomorphism
\[
\iota: C^n(H; \Z) \to \mathcal C^n(H\curvearrowright  H; \Z)
\]
given by
\begin{eqnarray*}
\iota(f)(h_0,\dots,h_n) &:=& f(h_0^{-1}h_1,h_1^{-1}h_2,\dots, h_{n-1}^{-1}h_n)\\
\iota^{-1}(g)(h_1, \dots, h_n) &:=& g(e,h_1,h_1h_2,\dots,h_1h_2\cdot \ldots \cdot h_n ).
\end{eqnarray*}
Thus $H^\bullet(H; \Z) = H^\bullet(C^\bullet(H; \Z), d)$, where the differential $d = \iota^{-1}\circ \delta \circ \iota$ is given by
\begin{eqnarray*}
df(h_1, \dots, h_{n+1}) &=& f(h_2, \dots, h_{n+1}) + \sum_{i=1}^n (-1)^i f(h_1, \dots, h_ih_{i+1}, \dots, h_{n+1})\\
&&+(-1)^{n+1}f(h_1, \dots, h_n).
\end{eqnarray*}  
Cochains in this model are called \emph{inhomogeneous cochains}, and are particularly useful to compute low degree cohomologies. We will be specifically interested in cohomology of degree $2$; we thus recall briefly the relation between $H^2(H; \Z)$ and central extensions. Given a central extension of groups of the form
\[\xi =\left(\begin{xy}
\xymatrix{
0\ar[r]&\Z \ar[r]^i&\widetilde{H} \ar[r]^p&H \ar[r]&\{e\}
}
\end{xy}\right)\]
and a set theoretic section $\sigma: H \to \widetilde{H}$ of $p$ we define a function $c_\sigma: H^2 \to \widetilde{H}$ by 
\[
c_\sigma(h_1,h_2) = \sigma(h_2)\sigma(h_1h_2)^{-1}\sigma(h_1).
\]
Since $p(c_\sigma(h_1,h_2)) = e$ we can consider $c_\sigma$ as a function into $i(\Z)$. We will often tacitly identify $\Z$ with its image in $\widetilde{H}$ and thus consider $c_\sigma$ as a function $c_\sigma: H^2 \to \Z$. It is straightforward to check that $c_\sigma$ satisfies the cocycle identity
\[
dc_\sigma(h_1,h_2,h_3) = c_\sigma(h_2,h_3) -c_\sigma(h_1h_2,h_3)+c_\sigma(h_1, h_2h_3) - c_\sigma(h_1,h_2)=0,
\] 
whence we refer to it as the \emph{obstruction cocycle} associated with the extension $\xi$ and the section $\sigma$. It turns out that the class $e(\xi) := [c_\sigma] \in H^2(H;\Z)$ is independent of the choice of section. This independence can easily be proved directly, but it is also a consequence of the following universal property of the class $[c_\sigma]$: 
\begin{lemma}[Lifting obstruction]\label{Lift}
If $\rho: \Gamma \to H$ is a homomorphism, then there exists a lift
\[\begin{xy}
\xymatrix{
0\ar[r]&\Z\ar[r]^i&\widetilde{H} \ar[r]^p&H \ar[r]&\{e\}\\
&&&\Gamma \ar[u]_\rho \ar@{.>}[ul]^{\widetilde{\rho}}&
}
\end{xy}\]
if and only if $\rho^*[c_\sigma] = 0 \in H^2(\Gamma;\Z)$.
\end{lemma}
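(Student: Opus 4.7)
The plan is to prove both implications by bookkeeping: any set-theoretic lift $\widetilde{\rho}: \Gamma \to \widetilde{H}$ of $\rho$ differs from the canonical set-theoretic choice $\gamma \mapsto \sigma(\rho(\gamma))$ by an element of $\ker p = i(\Z)$, and the deviation from being a homomorphism is measured precisely by $\rho^* c_\sigma$. Everything reduces to translating this observation into a coboundary equation in the inhomogeneous complex, using that $i(\Z)$ lies in the center of $\widetilde{H}$ so that factors in $i(\Z)$ may be moved freely past factors in $\widetilde{H}$.

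For $(\Rightarrow)$, I would assume a group-theoretic lift $\widetilde{\rho}$ exists and define $\beta \colon \Gamma \to \Z$ by the relation
\[
\sigma(\rho(\gamma)) = i(\beta(\gamma)) \cdot \widetilde{\rho}(\gamma),
\]
which makes sense because both factors on the two sides project to $\rho(\gamma)$ under $p$. Plugging this into $c_\sigma(\rho(\gamma_1), \rho(\gamma_2)) = \sigma(\rho(\gamma_2))\sigma(\rho(\gamma_1\gamma_2))^{-1}\sigma(\rho(\gamma_1))$, pulling the three $i(\beta)$ factors to the front using centrality, and exploiting $\widetilde{\rho}(\gamma_1\gamma_2) = \widetilde{\rho}(\gamma_1)\widetilde{\rho}(\gamma_2)$ to make the $\widetilde{\rho}$ terms cancel, the identity should collapse to
\[
\rho^* c_\sigma(\gamma_1, \gamma_2) = \beta(\gamma_2) - \beta(\gamma_1\gamma_2) + \beta(\gamma_1) = d\beta(\gamma_1, \gamma_2),
\]
so $\rho^*[c_\sigma] = 0$ in $H^2(\Gamma;\Z)$.

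For $(\Leftarrow)$, I would start from a $1$-cochain $\beta \colon \Gamma \to \Z$ with $d\beta = \rho^* c_\sigma$ and set
\[
\widetilde{\rho}(\gamma) := i(\beta(\gamma))^{-1} \cdot \sigma(\rho(\gamma)).
\]
That $p \circ \widetilde{\rho} = \rho$ is immediate. To check that $\widetilde{\rho}$ is a homomorphism, I would expand $\widetilde{\rho}(\gamma_1)\widetilde{\rho}(\gamma_2)$, move the central factors together, and rewrite $\sigma(\rho(\gamma_1))\sigma(\rho(\gamma_2))$ in terms of $\sigma(\rho(\gamma_1\gamma_2))$ and $c_\sigma(\rho(\gamma_1),\rho(\gamma_2))$ via the defining formula of $c_\sigma$ (after the necessary central reshuffle). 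The hypothesis $d\beta = \rho^* c_\sigma$ then forces the $i(\Z)$-corrections to cancel, leaving exactly $\widetilde{\rho}(\gamma_1\gamma_2)$.

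The only real obstacle is matching conventions: the formula $c_\sigma(h_1,h_2) = \sigma(h_2)\sigma(h_1h_2)^{-1}\sigma(h_1)$ is written with an unusual order of factors, and one must verify that the resulting cocycle matches the inhomogeneous differential $d$ used in the paper with the correct signs. Because the values of $c_\sigma$ already lie in the central subgroup $i(\Z)$, any ambiguity in factor order is harmless and the check is purely mechanical; no genuine difficulty arises beyond keeping track of signs and of the direction of inversion in $\widetilde{H}$.
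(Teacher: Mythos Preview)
Your proof is correct and is the standard argument. The paper itself does not give a proof of this lemma; it states it as a classical fact and refers the reader to Brown's book for details. The paper does, however, spell out the $(\Leftarrow)$ direction explicitly in the subsequent Proposition~\ref{LiftExplicit}, and your computation for that direction is identical to the paper's (your $i(\beta(\gamma))^{-1}\sigma(\rho(\gamma))$ is exactly the paper's $\sigma(\rho(\gamma))\cdot i(-u(\gamma))$ after using centrality). Your $(\Rightarrow)$ direction is the routine converse and your remark about sign conventions is apt but, as you note, harmless since everything lives in the central copy of $\Z$.
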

Conversely, a class  $e\in H^2(\G;\Z)$ determines a central extension, which is unique up to a suitable notion of isomorphism between
extensions. We refer the reader to \cite[Chapter IV]{brown} for the details.

In the sequel we will need the following explicit version of (one direction of) the lemma:
\begin{prop}[Lifting formula]\label{LiftExplicit}
Let $\rho: \Gamma \to H$ be a homomorphism. Assume that $\rho^*c_\sigma = du$ for some $u:\Gamma \to \mathbb Z$. Then a homomorphic lift $\widetilde{\rho}: \Gamma \to \widetilde{H}$ is given by the formula
\[
\widetilde{\rho}(\gamma) = \sigma(\rho(\gamma)) \cdot i(-u(\gamma)).
\]
\end{prop}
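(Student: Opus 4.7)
The plan is to verify two things: first, that $p \circ \widetilde{\rho} = \rho$, so $\widetilde{\rho}$ is indeed a set-theoretic lift of $\rho$ along $p$; second, that $\widetilde{\rho}$ is a homomorphism. The first is a one-line check since $p \circ i$ is trivial and $p \circ \sigma = \mathrm{id}_H$, so
\[
p(\widetilde{\rho}(\gamma)) = p(\sigma(\rho(\gamma))) \cdot p(i(-u(\gamma))) = \rho(\gamma).
\]
All the substance is in verifying multiplicativity.

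Before the main computation, I would record the defining identity for $c_\sigma$ in its multiplicative form. Starting from $c_\sigma(h_1,h_2) = \sigma(h_2)\sigma(h_1h_2)^{-1}\sigma(h_1)$ and using that the image of $i$ is central, rearrangement yields
\[
\sigma(h_1)\sigma(h_2) = i(c_\sigma(h_1,h_2)) \cdot \sigma(h_1 h_2),
\]
where I implicitly identify $c_\sigma$-values with their images under $i$. (A brief sanity check against the cocycle identity $dc_\sigma = 0$ confirms this sign convention is consistent with the inhomogeneous differential $d$ displayed above.)

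The main step is then a direct computation using centrality of $i(\Z)$ and the hypothesis $\rho^*c_\sigma = du$, i.e.\ $c_\sigma(\rho(\gamma_1),\rho(\gamma_2)) = u(\gamma_2) - u(\gamma_1\gamma_2) + u(\gamma_1)$. Expanding:
\[
\widetilde{\rho}(\gamma_1)\widetilde{\rho}(\gamma_2) = \sigma(\rho(\gamma_1))\sigma(\rho(\gamma_2)) \cdot i(-u(\gamma_1)-u(\gamma_2)),
\]
which by the displayed multiplicative identity equals
\[
i\bigl(c_\sigma(\rho(\gamma_1),\rho(\gamma_2))\bigr) \cdot \sigma(\rho(\gamma_1\gamma_2)) \cdot i(-u(\gamma_1)-u(\gamma_2)).
\]
Substituting the cocycle relation for $c_\sigma \circ \rho$, the integer exponent collapses to $-u(\gamma_1\gamma_2)$, so the whole expression becomes $\sigma(\rho(\gamma_1\gamma_2)) \cdot i(-u(\gamma_1\gamma_2)) = \widetilde{\rho}(\gamma_1\gamma_2)$, as required.

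There is no real obstacle here; the only subtle point is keeping the sign convention in the obstruction cocycle straight — specifically, making sure that the form of the multiplicative identity for $\sigma(h_1)\sigma(h_2)$ matches the sign conventions in $dc_\sigma$ and $du$ used earlier. Once those are aligned, the verification is algebraic manipulation using centrality of $i(\Z)$.
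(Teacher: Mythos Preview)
Your proof is correct and follows essentially the same approach as the paper's: both use the multiplicative identity $\sigma(h_1)\sigma(h_2) = i(c_\sigma(h_1,h_2))\,\sigma(h_1h_2)$, centrality of $i(\Z)$, and the hypothesis $\rho^*c_\sigma = du$ to verify multiplicativity of $\widetilde{\rho}$. The only cosmetic difference is that you start from $\widetilde{\rho}(\gamma_1)\widetilde{\rho}(\gamma_2)$ and simplify toward $\widetilde{\rho}(\gamma_1\gamma_2)$, whereas the paper starts from $\sigma(\rho(\gamma_1\gamma_2))$ and works in the other direction; you also include the (trivial) check that $p\circ\widetilde{\rho}=\rho$, which the paper leaves implicit.
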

\begin{proof} Since this formula is at the heart of our argument we carry out the straightforward computation. By definition of $c_\sigma$, we have
\begin{equation}\label{equ lemma 3.2}
\sigma(\rho(\gamma_1\gamma_2))=\sigma(\rho(\gamma_1)\rho(\gamma_2))=\sigma(\rho(\gamma_1))c_\sigma(\rho(\gamma_1,\rho(\gamma_2))^{-1}\sigma(\rho(\gamma_2)).
\end{equation}
Since by assumption $\rho^*c_\sigma = du$, we have
$$c_\sigma(\rho(\gamma_1),\rho(\gamma_2))=\rho^*(c_\sigma)(\gamma_1,\gamma_2)=du(\gamma_1,\gamma_2)=u(\gamma_2)-u(\gamma_1\gamma_2)+u(\gamma_1).$$
Since $i(\Z)$ is central in $\widetilde{H}$ we can rewrite Equation (\ref{equ lemma 3.2}) as
$$\sigma(\rho(\gamma_1\gamma_2))=\sigma(\rho(\gamma_1))i(-u(\gamma_1)\sigma(\rho(\gamma_2)(i(-u(\gamma_2))i(u(\gamma_1\gamma_2).$$
Multiplying both sides by $ i(-u(\gamma_1\gamma_2))$ now yields $\widetilde{\rho}(\gamma_1\gamma_2) =\widetilde{ \rho}(\gamma_1)\widetilde{\rho}(\gamma_2)$ and finishes the proof.
\end{proof}

The subcomplex $\mathcal C^n_b(H\curvearrowright X; \Z) \subset \mathcal C^n(H \curvearrowright X; \Z)$ of bounded functions is invariant under $\delta$, and its cohomology is called the \emph{(integral) bounded cohomology} of the $H$-action on $X$ and denoted $H_b^\bullet(H\curvearrowright X; \Z)$. In particular, $H_b^\bullet(H; \Z) := H_b^\bullet(H\curvearrowright H; \Z)$ is the bounded group cohomology of $H$ in the sense of \cite{Gromov}. 
Note that the isomorphism $\iota: C^n(H; \Z) \to \mathcal C^n(H\curvearrowright H; \Z)$ identifies $C_b^n(H\curvearrowright H; \Z)$ with the subspace $C^n_b(H; \Z) < C^n(H; \Z)$ of bounded functions, hence $H_b^\bullet(H; \Z)$ 
can also be computed from bounded inhomogeneous cochains.

The inclusion of complexes $(C^n_b(H; \Z), \delta) \hookrightarrow (C^n(H; \Z), \delta)$ induces on the level of cohomology a  \emph{comparison map} $H^\bullet_b(H;\Z) \to H^\bullet(H;\Z)$, whose kernel is classically 
denoted by $EH^\bullet_b(H;\Z)$. Note that an inhomogeneous bounded cocycle representing a class in  $EH^2_b(H;\Z)$ is of the form $dT$ for some $T: H \to \Z$ with the property that $|T(h_1h_2)-T(h_1)-T(h_2)| = |dT(h_1,h_2)|$ is 
uniformly bounded. Such a function $T$ is called an integral \emph{quasimorphism} and the number $D(T) := \|dT\|_\infty$ is called its \emph{defect}. Given two quasimorphisms $T_1, T_2$ we have $[dT_1] = [dT_2] \in EH^2_b(H;\Z)$ 
if and only if $T_1-T_2 \in {\rm Hom}(H;\Z)\oplus {\rm Map}_b(H; \Z)$. In particular, changing $T$ by a bounded amount does not change the bounded cohomology class of $[dT]$.

Bounded group cohomology can also be defined with real coefficients. In this case, bounded inhomogeneous cocycles in $EH^2_b(H;\R)$ are of the form $dT$ where $T$ is a real-valued quasimorphism. Every real-valued quasimorphism 
(and in particular every integral one) is at bounded distance from a unique homogeneous real-valued quasimorphism called its \emph{homogeneization}. Here a real-valued function $f$ is called \emph{homogeneous} 
provided $f(h^n) = n\cdot f(h)$ for all $n \in \mathbb N$. Homogeneous quasimorphisms have the additional properties of being conjugacy-invariant and linear on abelian subgroups. They also satisfy $f(h^n) = n\cdot f(h)$ 
for all $n\in \Z$, positive or not. Note that two quasimorphisms are at bounded distance if and only if their homogeneizations coincide. The following lemma illustrates how bounded cohomology with real coefficients 
can be used to obtain results concerning integral bounded cohomology; we will apply this in our second characterization of the bounded Euler class below.
\begin{lemma}\label{InjectivityExtension} If $p: \widetilde{H} \to H$ is a surjective homomorphism with amenable (e.g. abelian) kernel, then $p^*: H^2_b(H; \Z) \to H^2_b(\widetilde{H}; \Z)$ is injective.
\end{lemma}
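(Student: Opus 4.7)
The plan is to use the known analogue for real coefficients---namely, Gromov's theorem that a short exact sequence $1 \to N \to \widetilde H \to H \to 1$ with $N$ amenable induces an isomorphism $p^{*}\colon H^{\bullet}_{b}(H;\R) \to H^{\bullet}_{b}(\widetilde H;\R)$---and then to promote the conclusion from $\R$ to $\Z$ coefficients by a short argument exploiting the surjectivity of $p$ together with the fact that $\R$ has no non-trivial bounded subgroups.

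Concretely, I would start with a class $\alpha \in H^{2}_{b}(H;\Z)$ in the kernel of $p^{*}$ and pick a bounded $\Z$-valued inhomogeneous cocycle $c\colon H^{2}\to\Z$ representing it. The hypothesis $p^{*}\alpha = 0$ furnishes a bounded $\Z$-valued $1$-cochain $b\colon \widetilde H \to \Z$ with $p^{*}c = d b$. Next, viewing $c$ as taking values in $\R$ and applying Gromov's theorem to the amenable normal kernel $\ker(p)$, its class also vanishes in $H^{2}_{b}(H;\R)$, so there is a bounded $\R$-valued $1$-cochain $S\colon H \to \R$ with $c = d S$.

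The key step is then to compare the two trivializations $b$ and $p^{*}S$ of the cocycle $p^{*}c$. Their difference $p^{*}S - b$ is a bounded $\R$-valued $1$-cochain on $\widetilde H$ with vanishing coboundary; unraveling the cocycle equation, this simply says that $p^{*}S - b$ is a bounded homomorphism $\widetilde H \to \R$, which is necessarily trivial. Hence $p^{*}S = b$ takes values in $\Z$, and since $p$ is surjective, $S$ itself is $\Z$-valued. Consequently $c = d S$ with $S$ a bounded $\Z$-valued $1$-cochain on $H$, so $\alpha = 0$ in $H^{2}_{b}(H;\Z)$, as required.

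The only non-trivial input is Gromov's theorem for real coefficients; the rest is a short ``integrality promotion'' argument specific to $\Z$-coefficients. The main conceptual point I expect to verify carefully is that this promotion step really uses only surjectivity of $p$ (so that integer-valuedness of $p^{*}S$ descends to $H$) together with the vanishing of $H^{1}_{b}(\widetilde H;\R)$; amenability of $\ker(p)$ enters only through Gromov's theorem and is not invoked a second time.
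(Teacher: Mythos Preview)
Your argument is correct. Both your proof and the paper's use exactly the same two inputs---Gromov's isomorphism $p^{*}\colon H^{2}_{b}(H;\R)\xrightarrow{\cong}H^{2}_{b}(\widetilde H;\R)$ for amenable kernel, and the fact that bounded real-valued homomorphisms are trivial---but they package them differently. The paper invokes the Gersten long exact sequence associated with $0\to\Z\to\R\to\R/\Z\to 0$, obtains a commutative ladder whose outer vertical maps are injective (on $\mathrm{Hom}(\,\cdot\,;\R/\Z)$ by surjectivity of $p$, on $H^{2}_{b}(\,\cdot\,;\R)$ by Gromov), and concludes via the $4$-lemma. You instead do the corresponding diagram chase by hand at the cochain level: having a real bounded primitive $S$ of $c$ on $H$ and an integral bounded primitive $b$ of $p^{*}c$ on $\widetilde H$, you compare $p^{*}S$ with $b$, observe their difference is a bounded homomorphism to $\R$ hence zero, and then use surjectivity of $p$ to descend integrality of $S$. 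Your route is more elementary and self-contained for this particular degree; the paper's formulation makes clearer that the statement is a formal consequence of the long exact sequence structure and would adapt more readily to other coefficient changes or degrees.
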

\begin{proof} The short exact sequence $0 \to \Z \to \R \to \R/\Z \to 0$ of coefficients induces a natural long exact sequence in bounded cohomology, called the Gersten sequence (see \cite[Prop. 8.2.12]{Monod}), and the corresponding ladder associated with the homomorphism $p$ starts from
\[\begin{xy}\xymatrix{
0 \ar[r]\ar[d]& {\rm Hom}(H; \R/\Z)\ar[d]_{p^*}\ar[r] & H^2_b(H; \Z) \ar[d]_{p^*}\ar[r]&H^2_b(H; \R)\ar[d]_{p^*}\\
0 \ar[r]& {\rm Hom}(\widetilde{H}; \R/\Z) \ar[r]& H^2_b(\widetilde{H}; \Z)\ar[r] &H^2_b(\widetilde{H}; \R)\\
}\end{xy}
\]
Now surjectivity of $p$ implies that the pullback map $p^*: {\rm Hom}(H; \R/\Z) \to  {\rm Hom}(\widetilde{H}; \R/\Z)$ is injective, and the  map $p^*:H^2_b(H; \R) \to H^2_b(\widetilde{H}; \R)$ is an isomorphism 
by \cite{Gromov, Ivanov}, whence the lemma follows from the $4$-lemma.
\end{proof}

\subsection{The bounded Euler class as a bounded lifting obstruction}\label{SubsecEuler1}
From now on we reserve the letter $H$ to denote the group  $H := {\rm Homeo}^+(S^1)$ of orientation-preserving homeomorphisms of the circle $S^1 = \R/\Z$ and abbreviate by
\[
\widetilde{H} := \{\widetilde{h} \in {\rm Homeo}^+(\R)\mid \forall x \in \R:\; \widetilde{h} (x+1) = \widetilde{h} (x)+1\}
\]
its universal covering group (with respect to the compact-open topology). We then have a central extension
\[\xi =\left(\begin{xy}
\xymatrix{
0\ar[r]&\Z \ar[r]^i&\widetilde{H} \ar[r]^p&H \ar[r]&\{e\}
}
\end{xy}\right),\]
where $i(n)(x) := x+n$ and $p(\widetilde{h})([x]) = [\widetilde{h}(x)]$.

A section $\sigma: H \to \widetilde{H}$ is provided by specifying $\sigma(h)(0)$ for each $h \in H$; the section is called \emph{bounded} provided $E_\sigma := \{\sigma(h)(0)\mid h \in H\}$ is bounded. In this case the obstruction cocycle $c_\sigma: H^2 \to \Z$ is bounded and thus defines also a class in the \emph{bounded} second cohomology $H^2_b(H;\Z)$. Again it is easy to see that this class is independent of the choice of bounded section.
 We then obtain two classes ${\rm eu} :=[c_\sigma] \in H^2(H;\Z)$ and ${\rm eu}_b :=[c_\sigma] \in H^2_b(H;\Z)$.
\begin{defn} \label{defBddEulerclass} The classes {\rm eu} and ${\rm eu}_b$ are called the \emph{Euler class}, respectively \emph{bounded Euler class}.
\end{defn}
One special section $\sigma$ is obtained by taking $E_\sigma \subseteq [0,1)$. Let us give an explicit formula for the cocycle $c_\sigma$ in this case. For all $h_1, h_2\in H$ we have $\sigma(h_2)\sigma(h_1h_2)^{-1}\sigma(h_1) = i(c_\sigma(h_1,h_2))$. Since $i(\Z)<\widetilde{H}$ is central this can be written as $\sigma(h_1)\sigma(h_2) = \sigma(h_1h_2)i(c_\sigma(h_1,h_2))$. Evaluating at $0$ we obtain
\[
\sigma(h_1)\sigma(h_2)(0) = \sigma(h_1h_2)(0) + c_\sigma(h_1,h_2).
\]
Observe that $\sigma(h_1h_2)(0)$ and $\sigma(h_2)(0)$ are contained in $[0,1)$. The latter implies that $\sigma(h_1)\sigma(h_2)(0) \in [0,2)$. Thus
\begin{equation}\label{FirstFromulacsigma}
c_\sigma(h_1,h_2) =\left\{\begin{array}{ll} 1 & \textrm{if}\ \sigma(h_1)\sigma(h_2)(0) \in [1,2),  \\0 & \textrm{if}\ \sigma(h_1)\sigma(h_2)(0) \in [0,1).\end{array}\right.
\end{equation}
Another equivalent description can be given as follows: Observe that $\sigma(h_1)(1) = \sigma(h_1)(0) +1 \in [1,2)$ and that $\sigma(h_2)(0) < 1$ implies $\sigma(h_1)\sigma(h_2)(0)<\sigma(h_1)(1)$, 
and similarly $0 \leq \sigma(h_2)(0)$ implies $\sigma(h_1)(0)\leq \sigma(h_1)\sigma(h_2)(0)$. We may thus rewrite \eqref{FirstFromulacsigma} as
\begin{equation}\label{csigmaExplicit}
c_\sigma(h_1,h_2) =\left\{\begin{array}{ll} 1 & \textrm{if}\ 1\leq \sigma(h_1)\sigma(h_2)(0)<\sigma(h_1)(1)<2,  \\0 & \textrm{if}\ 0\leq \sigma(h_1)(0)\leq \sigma(h_1)\sigma(h_2)(0)<1. \end{array}\right.
\end{equation}
Both formulas will be used below.

\subsection{The bounded Euler class and the translation number}\label{SubsecEuler2}
The \emph{Poincar\'e translation number} $T: \widetilde{H} \to \R$ is the homogeneous quasimorphism on $\widetilde{H}$ given by
\[
{T}(\widetilde{h}) = \lim_{n \to \infty}\frac{\widetilde{h}^nx-x}{n}\quad(x \in \R),
\]
which by a classical theorem of Poincar\'e is independent of the choice of basepoint $x \in \R$ (see \cite{Poi1, Poi2}). Let $T_\Z: \widetilde{H} \to \Z$ be any function at bounded distance from $T$. Then the cocyle $dT_\Z$ is bounded and thus defines a 
class $[dT_\Z] \in H^2_b(\widetilde{H}; \Z)$, which is independent of the concrete choice of function $T_\Z$. We can now state the second characterization of the bounded Euler class. We recall that $p: \widetilde{H} \to H$ denotes the canonical projection.
\begin{prop}\label{TNumber} The bounded Euler class ${\rm eu}_b$ is the unique class in $H^2_b(H; \Z)$ such that $p^*{\rm eu}_b = -[dT_\Z] \in H^2_b(\widetilde{H};\Z)$.
\end{prop}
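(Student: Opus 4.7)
The plan is to split the statement into two pieces: uniqueness is immediate from Lemma~\ref{InjectivityExtension}, since $p:\widetilde H\to H$ is surjective with amenable kernel $\Z$, so $p^*:H^2_b(H;\Z)\to H^2_b(\widetilde H;\Z)$ is injective. For existence, I would exhibit an explicit cocycle-level identity using the bounded section $\sigma:H\to\widetilde H$ with $E_\sigma\subseteq[0,1)$ from Subsection~\ref{SubsecEuler1}, and then identify the resulting bounded coboundary with $-dT_\Z$.

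First I would introduce the function $u:\widetilde H\to\Z$ defined by $u(\widetilde h):=\lfloor\widetilde h(0)\rfloor$. The key observation is that $\widetilde h$ and $\sigma(p(\widetilde h))$ project to the same element of $H$, so they differ by a central element $i(k)$; evaluating at $0$ and using $\sigma(p(\widetilde h))(0)\in[0,1)$ forces $k=u(\widetilde h)$. Thus $\widetilde h=i(u(\widetilde h))\cdot\sigma(p(\widetilde h))$. Substituting this factorization into the product $\widetilde h_1\widetilde h_2$ and using the centrality of $i(\Z)$ together with the relation $\sigma(h_1)\sigma(h_2)=\sigma(h_1h_2)\cdot i(c_\sigma(h_1,h_2))$ (which follows from the definition of $c_\sigma$) yields
\[
u(\widetilde h_1\widetilde h_2)=u(\widetilde h_1)+u(\widetilde h_2)+c_\sigma(p(\widetilde h_1),p(\widetilde h_2)),
\]
i.e. $p^*c_\sigma=-du$ as bounded cochains on $\widetilde H$.

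The remaining step is to show that $u$ is at bounded distance from the Poincar\'e translation number $T$. To this end, introduce the intermediate function $e_0(\widetilde h):=\widetilde h(0)$ and verify it is a real quasimorphism with defect at most $1$: writing $\widetilde h_2(0)=n+t$ with $n\in\Z$ and $t\in[0,1)$ and using $\widetilde h_1(y+n)=\widetilde h_1(y)+n$ gives $\widetilde h_1\widetilde h_2(0)-\widetilde h_1(0)-\widetilde h_2(0)=\widetilde h_1(t)-\widetilde h_1(0)-t\in(-1,1)$. By the classical theorem of Poincar\'e, $T$ is precisely the homogenization of $e_0$, hence at bounded distance from $e_0$; combined with $|u(\widetilde h)-e_0(\widetilde h)|<1$ and $|T-T_\Z|$ bounded, this shows $u-T_\Z$ is bounded. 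Therefore $[du]=[dT_\Z]$ in $H^2_b(\widetilde H;\Z)$, and combining with $p^*c_\sigma=-du$ gives $p^*\mathrm{eu}_b=-[dT_\Z]$, as required.

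The main obstacle, as often in bounded cohomology computations of this type, is the careful bookkeeping of signs and section conventions; once the identification $\widetilde h=i(u(\widetilde h))\sigma(p(\widetilde h))$ is in hand, the rest is a formal manipulation inside the central extension combined with the standard quasimorphism estimate for evaluation at the origin.
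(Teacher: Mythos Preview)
Your proof is correct and follows essentially the same approach as the paper. Both arguments define the same integer-valued function $u(\widetilde h)=\lfloor\widetilde h(0)\rfloor$ (the paper calls this specific choice $T_\Z$), establish the cochain identity $p^*c_\sigma=-du$, and invoke Lemma~\ref{InjectivityExtension} for uniqueness; the only cosmetic difference is that you derive the identity from the abstract factorization $\widetilde h=i(u(\widetilde h))\sigma(p(\widetilde h))$ together with the cocycle relation $\sigma(h_1)\sigma(h_2)=\sigma(h_1h_2)i(c_\sigma(h_1,h_2))$, whereas the paper plugs into the explicit formula \eqref{FirstFromulacsigma} for $c_\sigma$ and computes $\sigma(h_1)\sigma(h_2)(0)$ directly.
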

\begin{proof} Let $\widetilde{h_1},\widetilde{h_2} \in \widetilde{H}$. We abbreviate $h_1 := p(\widetilde{h_1})$, $h_2 := p(\widetilde{h_2} )$. Given a real number $r \in \R$ we denote by $r = \lfloor r \rfloor + \{r\}$ the unique decomposition of $r$ with $\lfloor r\rfloor \in \Z$ and $\{r\} \in [0,1)$. Since $\widetilde{h_1}$ and $\sigma(h_1)$ have the same projection they differ by an integral translation which we obtain by evaluating the difference on $0$. We thus compute
\[
 \widetilde{h_1} (0) - \sigma(h_1)(0) = \lfloor \widetilde{h_1}(0) \rfloor+\{\widetilde{h_1}(0)\} -  \sigma(h_1)(0) = \lfloor \widetilde{h_1}(0) \rfloor,
\]
 where the last equality follows from that both $\{\widetilde{h_1}(0)\}$ and $ \sigma(h_1)(0)$ belong to $[0,1)$. Thus, for every $x\in \mathbb{R}$, we have
$\sigma(h_1)(x) = \widetilde{h_1}(x) -\lfloor \widetilde{h_1}(0) \rfloor$ and similarly $\sigma(h_2)(x) = \widetilde{h_2} (x) -\lfloor \widetilde{h_2} (0) \rfloor$. We deduce that
\begin{eqnarray*}
\sigma(h_1)\sigma(h_2)(0) &=& \sigma(h_1)(\widetilde{h_2} (0) -\lfloor \widetilde{h_2} (0) \rfloor) = \sigma(h_1)(\widetilde{h_2} (0)) - \lfloor \widetilde{h_2} (0) \rfloor\\
&=& \widetilde{h_1}\widetilde{h_2} (0) -\lfloor \widetilde{h_1}(0) \rfloor -\lfloor \widetilde{h_2} (0) \rfloor\\
&=& \lfloor \widetilde{h_1}\widetilde{h_2} (0)\rfloor -\lfloor \widetilde{h_1}(0) \rfloor -\lfloor\widetilde{h_2} (0) \rfloor + \{\widetilde{h_1}\widetilde{h_2} (0)\}.
\end{eqnarray*}
Since the last term is contained in $[0,1)$, this expression is in $[1,2)$ respectively $[0,1)$ if the sum of the first three terms is equal to $1$ respectively $0$. Representing $\mathrm{eu}_b$ by the cocycle $c_\sigma$ given in \eqref{FirstFromulacsigma}, we thus obtain
\[
p^*c_\sigma(\widetilde{h_1},\widetilde{h_2} ) =  c_\sigma(h_1,h_2)=\lfloor \widetilde{h_1}\widetilde{h_2} (0)\rfloor -\lfloor \widetilde{h_1}(0) \rfloor -\lfloor \widetilde{h_2} (0) \rfloor.
\]
Now the function $T_\Z: \widetilde{H} \to \Z$ given by $\widetilde{h_1} \mapsto \lfloor \widetilde{h_1}(0) \rfloor$ is at bounded distance from the translation number $T$ and the last identity can be written as $p^*c_\sigma = -dT_\Z$. We thus deduce that $p^*{\rm eu}_b = -[dT_\Z]$ and uniqueness follows from Lemma \ref{InjectivityExtension}.
\end{proof}

\subsection{The bounded Euler class realized over the circle}\label{SubsecEuler3}\label{explicit cocycles}
In this subsection we are going to show that the Euler class and the bounded Euler class are representable over the circle, i.e. that they are in the respective images of the maps $H^2(H\curvearrowright S^1;\Z) \to H^2(H; \Z)$ 
and $H^2_b(H\curvearrowright S^1;\Z) \to H^2_b(H; \Z)$. Recall that throughout we think of $S^1$ as the quotient space $\R/\Z$. In order to describe cocycles in $\mathcal C^n(H\curvearrowright S^1; \Z)$ we need to understand $H$-orbits in $(S^1)^{n+1}$. For $n \leq 2$ the classification of orbits is as follows: 
\subsection*{ Orbits of $H$ acting on $(S^1)^{n+1}$}
\begin{itemize}
\item[(n=0)] The action of $H$ on $S^1$ has exactly one orbit. 
\item[(n=1)] The action of $H$ on $(S^1)^2$ has two orbits: one degenerate orbit $\mathcal O_{deg} := \{(x,x)\mid x\in S^1\}$ and one non-degenerate orbit $\mathcal O_{ndeg} := \{(x,y)\mid x\neq y \in S^1\}$. 
\item[(n=2)]The action of $H$ on three factors $(S^1)^3$ has six orbits. Choose distinct points $x,y,z\in S^1$ and suppose that $(x,y,z)$ is a positively oriented triple. Then there are $4$ degenerate orbits 
\[\mathcal O_0 := H\cdot (x,x,x),\quad \mathcal O_1 := H\cdot (y,x,x),\quad \mathcal O_2 := H\cdot (x,y,x), \quad \mathcal O_3 :=H\cdot (x,x,y),\]
and $2$ non-degenerate orbits
\[ \mathcal O_+ := H\cdot (x,y,z), \quad \mathcal O_- := H\cdot (y,x,z).\]
\end{itemize}
For general $n$ there are still only finitely many $H$-orbits in $(S^1)^n$. This implies $\mathcal C^n_b(H\curvearrowright S^1; \Z) = \mathcal C^n(H\curvearrowright S^1; \Z) $ and thus the comparison map $H^n_b(H\curvearrowright S^1;\Z) \cong H^n(H\curvearrowright S^1; \Z)$ is an isomorphism. In particular, if an element of $H^n(H; \Z)$ is representable over $S^1$, then it is bounded.

In degree $2$ we can actually parametrize all possible homogeneous $H$-cocycles and homogeneous $H$-coboundaries using the above enumeration of orbits. Note that every homogeneous $2$-cochain $f$ is determined by the $6$ integers $\{f_0, f_1, f_2, f_3, f_+, f_-\}$, where $f_j$ is the value of $f$ on the orbit $\mathcal O_j$ for $j \in \{0,1,2,3,+,-\}$. For homogeneous coboundaries a straightforward computations shows that these numbers are given as follows.
\begin{lemma} \label{lemma b coboundary} Let $b:(S^1)^2\rightarrow \mathbb{R}$ be an arbitrary homogeneous $1$-cochain taking the values $\alpha$ and $\beta$ on the orbits $\mathcal O_{deg}$ and $\mathcal O_{ndeg}$ respectively and let $f = \delta b$ be the associated homogeneous $2$-coboundary. Then
\[
f_0 = f_1 = f_3 = \alpha, \quad f_2 = 2\beta-\alpha, \quad f_+ = f_- = \beta.
\]
\end{lemma}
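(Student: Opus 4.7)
The plan is purely computational: I would simply apply the formula for the homogeneous differential to one representative of each of the six orbits listed. Recall that for a homogeneous $1$-cochain $b$ the coboundary is given by
\[
\delta b(x,y,z) = b(y,z) - b(x,z) + b(x,y),
\]
and by $H$-invariance $b(u,v) = \alpha$ if $u = v$ and $b(u,v) = \beta$ otherwise. Since $f = \delta b$ is $H$-invariant, its value on each orbit is determined by evaluating $\delta b$ on any chosen representative.

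The first step is to fix, once and for all, a positively oriented triple $(x,y,z)$ of distinct points of $S^1$ and use the representatives listed in the excerpt, namely $(x,x,x)$ for $\mathcal O_0$, $(y,x,x)$ for $\mathcal O_1$, $(x,y,x)$ for $\mathcal O_2$, $(x,x,y)$ for $\mathcal O_3$, $(x,y,z)$ for $\mathcal O_+$, and $(y,x,z)$ for $\mathcal O_-$. Then I would plug each of these into the formula above, keeping track of which pairs of coordinates coincide and which do not, so that each term is replaced by $\alpha$ or $\beta$ accordingly. For example, in $\mathcal O_2$ all three pairs appearing have distinct entries except for the middle one, giving $\delta b(x,y,x) = b(y,x) - b(x,x) + b(x,y) = \beta - \alpha + \beta = 2\beta - \alpha$, while in $\mathcal O_+$ all three pairs are non-degenerate, giving $\beta - \beta + \beta = \beta$. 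The other four cases are handled in exactly the same way and yield the claimed values.

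There is no real obstacle: the argument is a six-line bookkeeping exercise, the only small point of care being that in the degenerate orbits $\mathcal O_1, \mathcal O_2, \mathcal O_3$ one must correctly identify \emph{which} of the three pairs $(x_0,x_1)$, $(x_0,x_2)$, $(x_1,x_2)$ is the degenerate one, since this determines whether $b$ contributes an $\alpha$ or a $\beta$ in each slot of the alternating sum. Once this is done, the equalities $f_0 = f_1 = f_3 = \alpha$, $f_2 = 2\beta - \alpha$ and $f_+ = f_- = \beta$ follow immediately, and the lemma is proved.
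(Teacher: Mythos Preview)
Your proposal is correct and is precisely the straightforward computation the paper alludes to; the paper does not spell out the six evaluations but simply asserts that the formulas follow from a direct case-by-case application of the homogeneous differential, exactly as you have done.
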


One very familiar homogeneous $H$-cocycle on $S^1$ of degree $2$ is the orientation cocycle ${\rm Or}$, which assigns the value $+1$, respectively $-1$, to positively oriented, resp. negatively oriented non-degenerate triples, and $0$ to 
all degenerate triples. By the previous lemma, none of its multiples is a coboundary, since the value on positively and negatively oriented triples is not the same. It thus defines a class $[{\rm Or}]$ of infinite order in $ H^2_{(b)}(H\curvearrowright S^1;\Z)$. We now describe general homogeneous $2$-cocycles:
\begin{lemma}\label{lemma f cocycle} Let $f:(S^1)^3\rightarrow \mathbb{R}$ be an invariant homogeneous $H$-cochain. Then $f$ is a cocycle if and only if
\[
f_0 = f_1 = f_3, \quad f_++f_-  = f_2+f_3.
\]
Moreover, $H^2_{(b)}(H\curvearrowright S^1;\Z) \cong \Z$ via the map $[f] \mapsto f_+ - f_-$.
\end{lemma}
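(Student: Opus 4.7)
The plan is to translate the cocycle equation $\delta f=0$ into a finite system of linear relations among the six values $f_0,f_1,f_2,f_3,f_+,f_-$, by evaluating on representatives of the (finitely many) $H$-orbits on $(S^1)^4$, and then to combine this with Lemma \ref{lemma b coboundary} to identify the quotient by coboundaries.

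For step one, I would tabulate $\delta f(x_0,x_1,x_2,x_3) = f(x_1,x_2,x_3) - f(x_0,x_2,x_3) + f(x_0,x_1,x_3) - f(x_0,x_1,x_2)$ on convenient orbit representatives. Tuples of type $(x,x,x,y)$ and its three cyclic cousins $(y,x,x,x)$, $(x,y,x,x)$, $(x,x,y,x)$ yield respectively $f_3-f_0$, $f_0-f_1$, $f_1-f_0$, $f_0-f_3$, so the first relation $f_0=f_1=f_3$ drops out. The two-distinct tuples $(x,y,y,x)$, $(x,y,x,y)$, etc., either give $0$ or recover $f_1=f_3$. The decisive computation is on a three-distinct tuple of the shape $(x,y,x,z)$ with $(x,y,z)$ positively oriented: the four face triples lie in the orbits $\mathcal O_-, \mathcal O_3, \mathcal O_+, \mathcal O_2$ (using that swapping two adjacent entries of a positive triple produces a negative one), so $\delta f = f_- - f_3 + f_+ - f_2$, giving the second relation $f_++f_-=f_2+f_3$. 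Finally, I would verify routinely that tuples of four pairwise distinct points (six cyclic orbits) and all remaining patterns produce automatically vanishing expressions, so no further relations arise.

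Having established the cocycle criterion, I would read off that a cocycle is determined by $(f_0,f_2,f_+)\in\Z^3$ via $f_1=f_3=f_0$ and $f_-=f_2+f_0-f_+$, while Lemma \ref{lemma b coboundary} describes coboundaries by $(\alpha,\beta)\in\Z^2$ with $f_0=\alpha$, $f_2=2\beta-\alpha$ and $f_+=f_-=\beta$. Comparing: a cocycle $f$ is a coboundary precisely when $f_+=f_-$ (the companion equation $f_2=2f_+-f_0$ is then forced by the cocycle relation together with $f_3=f_0$). Consequently the map $[f]\mapsto f_+-f_-$ is well-defined and injective on $H^2_{(b)}(H\curvearrowright S^1;\Z)$. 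Surjectivity onto $\Z$ is immediate by writing down, for each $n\in\Z$, the explicit cocycle with $f_0=f_1=f_3=0$, $f_+=f_2=n$, $f_-=0$, which satisfies both linear relations and is sent to $n$. This yields the isomorphism $H^2_{(b)}(H\curvearrowright S^1;\Z)\cong\Z$.

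The only step with real content is the bookkeeping in the first paragraph: one has to be confident that no $H$-orbit on $(S^1)^4$ has been overlooked and that the orientation of each face triple is correctly identified. The rest is linear algebra over $\Z$. The equality $H_b=H$ used implicitly when passing between the bounded and unbounded versions is justified by the finiteness of the orbit set, already noted in the excerpt preceding the lemma.
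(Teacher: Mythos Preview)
Your argument is correct and follows the same overall plan as the paper: evaluate $\delta f$ on orbit representatives in $(S^1)^4$ to obtain the three linear relations, then combine with Lemma~\ref{lemma b coboundary}. The one substantive difference is how you handle the \emph{converse} direction (that the three relations suffice for $f$ to be a cocycle). You propose a direct case check over all $H$-orbits on $(S^1)^4$, whereas the paper argues indirectly: since cocycles form the kernel of a $\Z$-linear map, they are a saturated subgroup of $\mathcal C^2\cong\Z^3$; if this subgroup were proper it would have rank $\leq 2$, forcing $H^2$ to be finite, contradicting the fact (from Lemma~\ref{lemma b coboundary}) that $[{\rm Or}]$ has infinite order. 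Your direct route is more elementary and self-contained (it does not rely on having already exhibited a class of infinite order), at the cost of the orbit bookkeeping you flag; the paper's route avoids that bookkeeping but leans on the prior lemma. Both are perfectly valid.

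One small remark on your write-up: in the first paragraph you list the values of $\delta f$ on $(x,x,x,y)$, $(y,x,x,x)$, $(x,y,x,x)$, $(x,x,y,x)$ as $f_3-f_0$, $f_0-f_1$, $f_1-f_0$, $f_0-f_3$. A couple of these signs are off (for instance $\delta f(x,y,x,x)=f_2-f_1+f_0-f_2=f_0-f_1$, not $f_1-f_0$), but since you only use the resulting \emph{equalities} $f_0=f_1=f_3$ this is harmless. Otherwise the proposal is sound.
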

\begin{proof} Let $(x,y,z)$ be a positively oriented triple. Writing out the cocycle relations $\delta f(y,x,x,x)=\delta f(x,x,x,y)=\delta f(x,y,x,z)=0$ yields
\[\begin{array}{l}
f(x,x,x)=f(y,x,x)=f(x,x,y),\\
f(y,x,z)-f(x,y,x)=f(x,x,z)-f(x,y,z),
\end{array}\]
which implies that every $2$-cocycle satisfies the $3$ identities of the lemma. The space $\mathcal C^2$ of all cochains satisfying these $3$ identities can be identified with $\Z^3$ via the map $f \mapsto (f_0, f_+, f_-)$. Under this identification 
the space of coboundaries corresponds to $\{(m,n,n)\mid m,n\in \Z\}$, hence the quotient of $\mathcal C^2$ modulo coboundaries is isomorphic to $\Z$ via the map $[f]\mapsto f_+ - f_-$. If there were any other identities satisfied by all $2$-cocycles than those following from the 3 identities above, then $H^2_{(b)}(H\curvearrowright S^1;\Z)$ would be a proper quotient of $\Z$, hence finite, contradicting the fact that $[{\rm Or}]$ has infinite order.
\end{proof}
It follows from the lemma that the class of the orientation cocycle generates a subgroup of index $2$ in $ H^2_{(b)}(H\curvearrowright S^1;\Z)$ and that the generator $-\frac 1 2 [{\rm Or}]$ is represented by the cocycle $c$ satisfying
\begin{equation}\label{EulerFormula}
c_0=c_1=c_3 = c_+ = 0,\quad c_2 = c_- = 1.\end{equation}
\begin{defn} \label{def: Euler cocycle} The homogeneous $2$-cocycle $c\in C^2_b(H\curvearrowright S^1; \Z)$ given by \eqref{EulerFormula} is called the \emph{Euler cocycle}.
\end{defn}
In order to relate the Euler cocycle to the bounded Euler class we need the following computation (see ~\cite[Lemma 2.1]{iozzi}):
\begin{lemma}\label{lemma csigma and c}  If $c \in \mathcal C^2_b(H\curvearrowright S^1; \Z)$ is the Euler cocycle from Definition \ref{def: Euler cocycle} and $c_\sigma \in C^2_b(H;\mathbb{Z})$ denotes the obstruction cocycle associated with the special section  $\sigma: H \to \widetilde{H}$ with $E_\sigma=[0,1)$, then
\[c_\sigma(h_1,h_2)= c([0],h_1\cdot[0],h_1h_2\cdot[0]).\]
Moreover,
\[\mathrm{Or}= - 2 c+\delta b,\]
where $b$ is the $H$-invariant $1$-cochain which takes values $0$ and $1$ on $\mathcal O_{deg}$ and $\mathcal O_{ndeg}$ respectively.
\end{lemma}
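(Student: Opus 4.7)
The plan is to verify the two identities separately, leveraging the finite $H$-orbit structure on $(S^1)^{n+1}$ established just above the statement.

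For the first identity, I would unpack the special section $\sigma$ via the explicit formula \eqref{csigmaExplicit}. Setting $a := \sigma(h_1)(0) \in [0,1)$ and $s := \sigma(h_1)\sigma(h_2)(0) \in [0,2)$ and noting that $\sigma(h_1)\sigma(h_2)$ is a lift of $h_1h_2$, we have $h_1\cdot[0]=[a]$ and $h_1h_2\cdot[0]=[s]$. The cocycle $c_\sigma$ records whether the lift $s$ of $h_1h_2\cdot[0]$ has ``wrapped around'' past the lift $a+1$ of $h_1\cdot[0]$, and I would then do a case analysis on $c_\sigma\in\{0,1\}$, matching the triple $([0],[a],[s])$ to one of the six $H$-orbits.

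In the case $c_\sigma(h_1,h_2)=0$, one has $s\in[0,1)$ with $0\leq a\leq s$, so the triple lands in one of the ``forward'' orbits $\mathcal O_0,\mathcal O_1,\mathcal O_3,\mathcal O_+$, on all of which $c$ vanishes by \eqref{EulerFormula}. In the case $c_\sigma=1$, i.e. $1\leq s<a+1<2$, the representatives $0,a,s-1$ lie in $[0,1)$ with $0\leq s-1<a<1$, which places the triple either in $\mathcal O_2$ (if $s=1$, so that $h_1h_2\cdot[0]=[0]$) or in $\mathcal O_-$ (if $s>1$, producing a negatively oriented triple), both of which carry $c$-value $1$. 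This matches $c_\sigma$ in every case.

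For the second identity, I would observe that $\mathrm{Or}$, $c$ and $\delta b$ are all $H$-invariant homogeneous $2$-cochains on $S^1$, hence determined by their six orbit values. Applying Lemma \ref{lemma b coboundary} to $b$ (with $\alpha=0$, $\beta=1$) yields $(\delta b)_0=(\delta b)_1=(\delta b)_3=0$, $(\delta b)_2=2$, $(\delta b)_+=(\delta b)_-=1$. Combined with the values of $c$ from \eqref{EulerFormula}, the identity $\mathrm{Or}=-2c+\delta b$ reduces to the orbit-by-orbit check
\[
(0,0,0,0,+1,-1) \;=\; -2\,(0,0,1,0,0,1) + (0,0,2,0,1,1),
\]
which is immediate.

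The only real subtlety, and where I would spend the most care, is the $c_\sigma=1$ case in the first part: one must pick the correct lift $s-1\in[0,1)$ of $h_1h_2\cdot[0]$ and confirm that the resulting triple is recognized as negatively oriented (orbit $\mathcal O_-$) rather than positively oriented. The definition of $c_\sigma$ is designed to record precisely this orientation-reversing wrap-around, so once the bookkeeping is done correctly the match with the Euler cocycle is forced.
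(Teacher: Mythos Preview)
Your proposal is correct and follows essentially the same approach as the paper: the paper's proof simply asserts that the explicit description of $c$ on the six orbits yields the same case split as \eqref{csigmaExplicit} for $c_\sigma$, and dismisses the second identity as ``straightforward,'' which is exactly the orbit-by-orbit verification you carry out. Your write-up is more detailed than the paper's, but the method is identical.
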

\begin{proof} It follows from the explicit definition of $c$ that
\begin{equation*}
c([0],h_1\cdot[0],h_1h_2\cdot[0]) =\left\{\begin{array}{ll} 1 & \mathrm{if \ } 1\leq \sigma(h_1)\sigma(h_2)(0)<\sigma(h_1)(1)<2,  \\0 &\mathrm{if \ }0\leq \sigma(h_1)(0)\leq \sigma(h_1)\sigma(h_2)(0)<1. \end{array}\right.
\end{equation*}
In view of \eqref{csigmaExplicit} this implies $c_\sigma(h_1,h_2)= c([0],h_1\cdot[0],h_1h_2\cdot[0])$. The relation $ \mathrm{Or}=-2 c+\delta b$ is straightforward. 
\end{proof}

\begin{comment}
Fix $h_1,h_2 \in H$. By definition,
$\sigma(h)\sigma(gh)^{-1}\sigma(g) = i(c_\sigma(g,h))$. Since $i(\Z)<\widetilde{H}$ is central this can be written as $\sigma(g)\sigma(h) = \sigma(gh)i(c_\sigma(g,h))$. Evaluating at $0$ we obtain
\[
\sigma(g)\sigma(h)(0) = \sigma(gh)(0) + c_\sigma(g,h).
\]
TO BE REVISED
To prove that $c_\sigma(g,h)= c(0,g(0),gh(0))$, let $g,h \in H$. Let us see what the value of $c_\sigma(g,h)=\sigma(h)\sigma(gh)^{-1}\sigma(g)$ means. We know that  $\sigma(h)\sigma(hg)^{-1}\sigma(g)$ is a translation $T^m(x)=x+m$. Conjugating by $\sigma(h)$, we can rewrite this as $\sigma(g)\sigma(h)=T^m\sigma(hg)$.  To determine $m$ it is enough to look at the image of $0$ under $\sigma(g)\sigma(h)=T^m\sigma(hg)$. 

By definition, $\sigma(h)(0)\in [0,1)$. So we get on the one hand that 
$$\sigma(g)\sigma(h)(0)\in \sigma(g)([0,1))=[\sigma(g)(0),\sigma(g)(1))\subset [0,2).$$ 
On the other hand, 
$$T^m\sigma(gh)\in T^m([0,1))=[m,m+1),$$ 
which implies that $m=0$ or $1$. 

More precisely, $m=0$ if and only if $\sigma(g)\sigma(h)(0)\in[0,1)$. Since $\sigma(g)[0,\sigma(h)(0)]=[\sigma(g)(0),\sigma(g)\sigma(h)(0)]$ this implies $0\leq \sigma(g)(0)\leq \sigma(g)\sigma(h)(0)<1$ and hence $c_\sigma(g,h)= c(0,g(0),gh(0))=0$. 
As $m=1$ if and only if  $\sigma(g)\sigma(h)(0)\in[1,2)$ we look at the image of the interval $[\sigma(h)(0),1)$ by $\sigma(g)$, which is equal to $[\sigma(g)\sigma(h)(0),\sigma(g)(1))$, and which implies that $1\leq \sigma(g)\sigma(h)(0)<\sigma(g)(1)<2$ and hence $c_\sigma(g,h)= c(0,g(0),gh(0))=1$. 
\end{comment}

From this computation we draw the following conclusion.
\begin{cor}\label{cor: e as cocycle on S1}  The bounded Euler class ${\rm eu}_b$ is representable over the circle. In fact it is represented by the Euler cocycle $c: (S^1)^3 \to \Z$. Similarly, the class $-2\cdot {\rm eu}_b$ is represented over the circle by the orientation cocycle. \qed
\end{cor}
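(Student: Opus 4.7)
The statement is essentially a direct consequence of Lemma \ref{lemma csigma and c} together with the dictionary between homogeneous and inhomogeneous cochains; the plan is to unpack this dictionary carefully and translate the equality of the inhomogeneous obstruction cocycle $c_\sigma$ with the ``$[0]$-pullback'' of the Euler cocycle $c$ into the required statement about representability over the circle.

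First, recall that a class $\alpha \in H^2_b(H;\Z)$ is represented over $S^1$ precisely when there exists some $c' \in \mathcal C^2_b(H \curvearrowright S^1;\Z)$ whose ``basepoint cocycle'' $c'_{x_0}(h_0,h_1,h_2) = c'(h_0 x_0, h_1 x_0, h_2 x_0)$ represents $\alpha$ in $H^2_b(H\curvearrowright H; \Z) \cong H^2_b(H;\Z)$. Under the isomorphism $\iota: C^\bullet(H;\Z) \to \mathcal C^\bullet(H\curvearrowright H;\Z)$ recalled earlier, an inhomogeneous cocycle $c_\sigma$ corresponds to the homogeneous cocycle $(h_0,h_1,h_2) \mapsto c_\sigma(h_0^{-1}h_1, h_1^{-1}h_2)$, so I will check that this homogeneous representative of ${\rm eu}_b$ coincides with $c_{[0]}$.

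Using Lemma \ref{lemma csigma and c} applied to the pair $(h_0^{-1}h_1, h_1^{-1}h_2)$ together with the $H$-invariance of $c$, I compute
\[
\iota(c_\sigma)(h_0,h_1,h_2) = c_\sigma(h_0^{-1}h_1,h_1^{-1}h_2) = c([0], h_0^{-1}h_1\cdot[0], h_0^{-1}h_2\cdot[0]) = c(h_0\cdot[0], h_1\cdot[0], h_2\cdot[0]),
\]
which is exactly $c_{[0]}(h_0,h_1,h_2)$. Hence the image of $[c] \in H^2_b(H\curvearrowright S^1;\Z)$ under $\iota_{S^1}$ is represented by $\iota(c_\sigma)$, and thus equals ${\rm eu}_b$. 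This gives the first assertion.

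For the second assertion, the identity $\mathrm{Or} = -2c + \delta b$ from Lemma \ref{lemma csigma and c} shows that $[\mathrm{Or}] = -2[c]$ in $H^2_b(H\curvearrowright S^1;\Z)$. Since $\iota_{S^1}$ is a group homomorphism, applying it gives $\iota_{S^1}([\mathrm{Or}]) = -2\iota_{S^1}([c]) = -2\,{\rm eu}_b$, as claimed. I do not anticipate any genuine obstacle here: the only thing to be careful about is the bookkeeping relating the ``inhomogeneous'' formula of Lemma \ref{lemma csigma and c} (which involves $h_1$ and $h_1h_2$) with the ``homogeneous'' formula for $c_{[0]}$; this is handled by the change of variables $(h_0,h_1,h_2) \mapsto (h_0^{-1}h_1, h_1^{-1}h_2)$ and using that $c$ is $H$-invariant.
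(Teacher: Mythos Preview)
Your proof is correct and follows exactly the approach intended by the paper: the corollary is marked with \qed\ because it is immediate from Lemma~\ref{lemma csigma and c}, and you have simply unpacked the passage between the inhomogeneous cocycle $c_\sigma$ and the homogeneous cocycle $c_{[0]}$ that the paper leaves implicit. Your computation $\iota(c_\sigma)(h_0,h_1,h_2)=c_\sigma(h_0^{-1}h_1,h_1^{-1}h_2)=c([0],h_0^{-1}h_1\cdot[0],h_0^{-1}h_2\cdot[0])=c(h_0\cdot[0],h_1\cdot[0],h_2\cdot[0])$ is exactly right, and the second assertion follows as you say from the relation ${\rm Or}=-2c+\delta b$.
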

Note that, in particular, for every $x \in S^1$ the homogeneous $2$-cocycle $c_x:H^3\rightarrow \mathbb{Z}$ given by
\[(h_0,h_1,h_2)\mapsto c_x(h_0,h_1,h_2)=c(h_0x,h_1x,h_2x)
\]
represents the bounded Euler class. %Also note that since the Euler class is not divisible in $H^2(H; \Z)$, it follows that the class $[c]$ of the Euler cocycle generates $H^2_{(b)}(H\curvearrowright S^1;\Z)$ as claimed earlier.

\section{Ghys' Theorem}\label{SecGhysThm}

\subsection{Circle actions with vanishing bounded Euler class}\label{SubsecVanishing}
Before we turn to the proof of Ghys' Theorem in the general case we provide a characterization of circle actions with vanishing bounded Euler class. This characterization can be seen as a special case of Ghys' Theorem, but it is also of independent interest and has a particularly simple proof. Parts of this special case will also be used in the proof of the general theorem.

Recall that the Euler class ${\rm eu}$ was defined as an obstruction class. It thus follows from Lemma \ref{Lift} that if $\rho: \Gamma \to H$ is a circle action, then
\[
\rho^*{\rm eu} = 0 \;\Leftrightarrow \; \text{the action lifts to an action on the real line.}
\]
The following result shows that the vanishing of the \emph{bounded} Euler class has much more drastical consequences:
\begin{prop}\label{VanishingBounded1} Let $\rho: \G \to H$ be a circle action with $\rho^*{\rm eu}_b = 0$. Then the action lifts to an action on the real line which moreover has a fixed point.
\end{prop}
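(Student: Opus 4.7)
The starting point is that the hypothesis $\rho^*\mathrm{eu}_b = 0$ says more than just $\rho^*\mathrm{eu}=0$: it means that the pullback of the bounded cocycle $c_\sigma$ of Subsection \ref{SubsecEuler1} is the coboundary of a \emph{bounded} function $u: \Gamma \to \Z$. My plan is to exploit this boundedness to produce a lift whose orbit of the origin is bounded in $\R$, and then to extract a fixed point as the supremum of that orbit.

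First, I would fix the special section $\sigma: H \to \widetilde{H}$ with image in $[0,1)$ used in \eqref{FirstFromulacsigma}, and write $\rho^*c_\sigma = du$ with $u: \Gamma \to \Z$ bounded. Applying the lifting formula of Proposition \ref{LiftExplicit} then produces a homomorphism
\[
\widetilde{\rho}: \Gamma \to \widetilde{H}, \qquad \widetilde{\rho}(\gamma) = \sigma(\rho(\gamma)) \cdot i(-u(\gamma)),
\]
which lifts $\rho$ and hence yields an action of $\Gamma$ on $\R$. Evaluating at $0$ and using that $\sigma(\rho(\gamma))$ commutes with the integer translation $i(-u(\gamma))$ gives
\[
\widetilde{\rho}(\gamma)(0) = \sigma(\rho(\gamma))(0) - u(\gamma).
\]
Since $\sigma(\rho(\gamma))(0) \in [0,1)$ and $u$ is bounded by hypothesis, the full orbit $\widetilde{\rho}(\Gamma) \cdot 0 \subset \R$ is bounded. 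This is the essential consequence of working with bounded rather than ordinary cohomology.

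Set
\[
x_0 := \sup_{\gamma \in \Gamma} \widetilde{\rho}(\gamma)(0) \;\in\; \R,
\]
which is finite by the previous step. For any $\gamma_0 \in \Gamma$, the element $\widetilde{\rho}(\gamma_0) \in \widetilde{H}$ is an orientation-preserving homeomorphism of $\R$ and hence continuous and monotone, so it commutes with suprema of bounded subsets of $\R$:
\[
\widetilde{\rho}(\gamma_0)(x_0) = \sup_{\gamma} \widetilde{\rho}(\gamma_0)\widetilde{\rho}(\gamma)(0) = \sup_{\gamma} \widetilde{\rho}(\gamma_0 \gamma)(0) = x_0,
\]
the last equality being left-invariance of the supremum over $\Gamma$. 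Thus $x_0$ is a global fixed point of the lifted action $\widetilde{\rho}$, completing the proof.

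The argument is short and the only subtle point is conceptual rather than technical: one must use the bounded representative $u$ of the obstruction to make the orbit of $0$ bounded, since boundedness is exactly what allows the $\sup$ to exist and be preserved by each $\widetilde{\rho}(\gamma_0)$. Mere vanishing of the unbounded Euler class would give a lift but no control on its orbits, so no fixed point in general.
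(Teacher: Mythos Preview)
Your proof is correct and follows essentially the same approach as the paper: write $\rho^*c_\sigma = du$ with $u$ bounded, use Proposition~\ref{LiftExplicit} to build the lift $\widetilde{\rho}(\gamma) = \sigma(\rho(\gamma))\cdot i(-u(\gamma))$, observe that the orbit of $0$ is bounded, and take its supremum as the fixed point. You give a bit more detail than the paper on why the supremum is actually fixed (continuity and monotonicity of $\widetilde{\rho}(\gamma_0)$ together with left-invariance of the index set), which the paper simply declares ``clear''.
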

\begin{proof} By assumption there exists a bounded function $u: \G \to \Z$ with $\rho^*c_\sigma = du$, where $c_\sigma$ is the cocycle representing ${\rm eu}_b$ explicitly given in Equations \eqref{FirstFromulacsigma} and \eqref{csigmaExplicit}. By Proposition \ref{LiftExplicit} we have a homomorphism
\[\widetilde{\rho}: \G \to \widetilde{H}, \quad  \widetilde{\rho}(\gamma ) = \sigma(\rho(\gamma )) \cdot i(-u(\gamma )).\]
In particular,
\[
\widetilde{\rho}(\gamma )(0) =  \sigma(\rho(\gamma ))(0) - u(\gamma ).
\]
Now, since $\sigma$ is a bounded section and $u$ is bounded, $\widetilde{\rho}(\gamma )(0)$ is  also  bounded. It follows that
\[F^+(\widetilde{\rho}) := \sup_{\gamma  \in \Gamma }\widetilde{\rho}(\gamma )(0)\]
is well-defined, and it is clearly a fixed point for $\widetilde{\rho}(\G)$.
\end{proof}
Using the second characterization of the bounded Euler class via the translation number we obtain a converse to this result, leading to the following characterization:
\begin{cor}[Circle actions with vanishing bounded Euler class]\label{zeroprop} Let $\rho: \G \to {\rm Homeo}^+(S^1)$ be a circle action. Then the following are equivalent:
\begin{enumerate}[(i)]
\item $\rho^*{\rm eu}_b = 0$.
\item The circle action $\rho$ lifts to an action on the real line which moreover has a fixed point.
\item $\rho(\Gamma)$ fixes a point in $S^1$.
\item $\rho$ is semi-conjugate to the trivial circle action.
\end{enumerate}
\end{cor}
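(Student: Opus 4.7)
The plan is to prove the cycle of implications (i)$\Rightarrow$(ii)$\Rightarrow$(iii)$\Rightarrow$(i), and then to verify (iii)$\Leftrightarrow$(iv) separately using what is already known about left-semi-conjugacy to the trivial action. The implication (i)$\Rightarrow$(ii) is nothing but Proposition~\ref{VanishingBounded1}, which has just been proved. For (ii)$\Rightarrow$(iii), if $\widetilde{\rho}:\G \to \widetilde{H}$ is a lift of $\rho$ fixing a point $\widetilde{x}_0 \in \R$, then $\rho = p\circ\widetilde{\rho}$ fixes $[\widetilde{x}_0] \in S^1$.

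The only substantive step is (iii)$\Rightarrow$(i), for which the natural tool is the translation-number characterization of the bounded Euler class (Proposition~\ref{TNumber}). Assuming that $x_0 \in S^1$ is a global fixed point of $\rho(\G)$, I would first choose an arbitrary lift $\widetilde{x}_0 \in \R$ and, for each $\gamma \in \G$, let $\widetilde{\rho}(\gamma) \in \widetilde{H}$ be the unique lift of $\rho(\gamma)$ satisfying $\widetilde{\rho}(\gamma)(\widetilde{x}_0) = \widetilde{x}_0$. Uniqueness is immediate: two lifts of the same element differ by $i(n)$ for some $n\in\Z$, and comparing their values at $\widetilde{x}_0$ forces $n=0$. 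The same uniqueness shows that $\widetilde{\rho}$ is a homomorphism, since $\widetilde{\rho}(\gamma_1)\widetilde{\rho}(\gamma_2)$ lifts $\rho(\gamma_1\gamma_2)$ and fixes $\widetilde{x}_0$. By construction every $\widetilde{\rho}(\gamma)$ has Poincar\'e translation number zero, so $T\circ\widetilde{\rho}\equiv 0$, and since $T_\Z$ differs from $T$ by a uniformly bounded amount, $T_\Z\circ\widetilde{\rho}: \G \to \Z$ is a bounded function. Applying Proposition~\ref{TNumber} I then get
\[
\rho^*{\rm eu}_b \;=\; \widetilde{\rho}^* p^*{\rm eu}_b \;=\; -\widetilde{\rho}^*[dT_\Z] \;=\; -[d(T_\Z\circ\widetilde{\rho})] \;=\; 0 \quad \text{in}\ H^2_b(\G;\Z),
\]
since the coboundary of a bounded $\Z$-valued function represents the trivial class.

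The equivalence (iii)$\Leftrightarrow$(iv) is a direct consequence of Proposition~\ref{semiconj:rem}: part~(ii) of that proposition shows that every circle action is automatically right-semi-conjugate to the trivial action, so being semi-conjugate to the trivial action reduces to being left-semi-conjugate to it, and part~(iii) characterises this last property as having a global fixed point. The hardest ingredient is (iii)$\Rightarrow$(i); the subtlety there is that vanishing of the ordinary Euler class follows from Lemma~\ref{Lift} alone, but to get vanishing of the \emph{bounded} class one really needs to exploit that the particular lift $\widetilde{\rho}$ constructed from a fixed point has zero translation numbers, which is exactly the additional information encoded in Proposition~\ref{TNumber}.
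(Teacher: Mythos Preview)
Your proof is correct and follows essentially the same approach as the paper. The only organizational difference is that the paper proves the pairwise equivalences (i)$\Leftrightarrow$(ii) and (ii)$\Leftrightarrow$(iii) separately (constructing the lift in (iii)$\Rightarrow$(ii) by extending the action on $S^1\setminus\{[x_0]\}$ periodically to $\R$), whereas you run the cycle (i)$\Rightarrow$(ii)$\Rightarrow$(iii)$\Rightarrow$(i) and build the fixed-point lift directly inside the step (iii)$\Rightarrow$(i); the key use of Proposition~\ref{TNumber} to conclude vanishing of the bounded class from vanishing translation numbers is identical in both arguments.
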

\begin{proof} We have already seen that (i)$\Rightarrow$(ii)  in Proposition \ref{VanishingBounded1}. Conversely, if (ii) holds for a lift $\widetilde{\rho}: \G \to \widetilde{H}$ with fixed point $x_0$, then by Proposition \ref{TNumber}
\[
\rho^*{\rm eu}_b = -\widetilde{\rho}^*[dT_\Z] =-[d\widetilde{\rho}^*T_\Z].
\]
However we have for every $\gamma \in \G$,
\[
\widetilde{\rho}^*T(\gamma ) =  \lim_{n \to \infty}\frac{\widetilde{\rho}(\gamma )^n(x_0)-x_0}{n}=0,
\]
whence $\widetilde{\rho}^*T_\Z$ is bounded and thus (i) holds. The implication (ii)$\Rightarrow$(iii) is obvious, since the projection of a fixed point of a lift is a fixed point for the original action. Conversely, if $\rho(\Gamma)$ fixes $[x_0] \in S^1$, then it acts on $S^1\setminus\{[x_0]\}$ and this action can be lifted to an action on $(x_0, x_0+1)$ and periodically to an action on $\R$ fixing all points in $x_0+\Z$. This shows (ii)$\Leftrightarrow$(iii) and the 
equivalence (iii) $\Leftrightarrow$ (iv) follows from Proposition \ref{semiconj:rem}.
\end{proof} 
Although Corollary \ref{zeroprop} is only a very simple special case of Ghys' Theorem, it is sufficient for many applications. E.g. most of the applications of Ghys' Theorem in higher Teichm\"uller theory depend only on Corollary \ref{zeroprop} (see e.g. \cite{BIW1, BBHIW}). We therefore find it important to point out the above simple proof. Note that a slightly stronger version of Corollary \ref{zeroprop} is established in the appendix.

\subsection{A refined statement of Ghys' Theorem}
We will now prove Ghys' Theorem \ref{IntroMain} (with our Definition \ref{semi:def} of semi-conjugacy), thus establishing that the bounded Euler class is a complete invariant of semi-conjugacy. We will actually prove the following more precise version:

\begin{thm}\label{GhysExplicit}
Let $\rho_1,\rho_2$ be circle actions of $\G$.
\begin{enumerate}[(i)]
\item If $\rho_1^*{\rm eu}_b=\rho_2^*{\rm eu}_b$, then $\rho_1$ and $\rho_2$ are semi-conjugate.
\item If $\rho_1$ and $\rho_2$ are semi-conjugate and either of them has a fixed point, then both have a fixed point and $\rho_1^*{\rm eu}_b = \rho_2^*{\rm eu}_b = 0$.
\item If $\rho_1$ is fixed point free and left-semi-conjugate to $\rho_2$, then $\rho_1^*{\rm eu}_b=\rho_2^*{\rm eu}_b \neq 0$.
\end{enumerate} 
\end{thm}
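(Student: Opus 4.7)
The proof splits along the three parts. I would handle (ii) and (iii) first, then the construction in (i).

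\emph{Part (ii)} is immediate from Corollary~\ref{zeroprop} and Proposition~\ref{semiconj:rem}(i): if either $\rho_i$ has a fixed point, Corollary~\ref{zeroprop} makes it semi-conjugate to the trivial action with vanishing bounded Euler class; transitivity of semi-conjugacy extends this to the other action, and Corollary~\ref{zeroprop} applied in reverse yields both conclusions.

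\emph{Part (iii).} I would first observe that $\rho_2$ must be fixed point free, since any fixed point $z\in S^1$ of $\rho_2(\Gamma)$ would satisfy $\rho_1(\gamma)\varphi(z)=\varphi\rho_2(\gamma)(z)=\varphi(z)$ for all $\gamma$, contradicting the fixed-point-freeness of $\rho_1$; Corollary~\ref{zeroprop} then gives $\rho_i^*\mathrm{eu}_b\neq 0$. To prove equality of the two classes, use the $S^1$-representation of the bounded Euler class (Corollary~\ref{cor: e as cocycle on S1}): choose $y_0\in S^1$, set $x_0:=\varphi(y_0)$, so that $\rho_1^*\mathrm{eu}_b$ and $\rho_2^*\mathrm{eu}_b$ are represented respectively by the homogeneous $\Gamma$-cocycles
\[
c_1(\gamma_1,\gamma_2,\gamma_3)=c(\varphi\rho_2(\gamma_1)y_0,\varphi\rho_2(\gamma_2)y_0,\varphi\rho_2(\gamma_3)y_0)
\]
and $c_2(\gamma_1,\gamma_2,\gamma_3)=c(\rho_2(\gamma_1)y_0,\rho_2(\gamma_2)y_0,\rho_2(\gamma_3)y_0)$. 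A case analysis on the six $H$-orbits in $(S^1)^3$, combined with the explicit values of $c$ from \eqref{EulerFormula} and the fact that a non-decreasing degree one map preserves weak orientation, then shows that $c_1-c_2$ is the coboundary of the bounded $\Gamma$-invariant $1$-cochain $b(y_1,y_2)$ equal to $1$ when $y_1\neq y_2$ and $\varphi(y_1)=\varphi(y_2)$, and $0$ otherwise.

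\emph{Part (i).} If $\rho_1^*\mathrm{eu}_b=\rho_2^*\mathrm{eu}_b=0$ then by Corollary~\ref{zeroprop} both actions are semi-conjugate to the trivial action, and hence to each other. In the nonzero case both actions are fixed point free. The equality descends via the comparison map to an equality of ordinary Euler classes, so the central extensions $\Gamma(\rho_1)$ and $\Gamma(\rho_2)$ are isomorphic; choosing an isomorphism and applying Proposition~\ref{LiftExplicit} with the bounded $u\colon\Gamma\to\Z$ witnessing the equality in $H^2_b(\Gamma;\Z)$, one can arrange lifts $\widetilde{\rho_i}\colon\Gamma\to\widetilde{H}$ whose obstruction cocycles on $\Gamma^2$ agree pointwise. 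Fix a basepoint $\widetilde z\in\R$ and let $O_i:=\{\widetilde{\rho_i}(\gamma)(\widetilde z)+n\colon\gamma\in\Gamma,\,n\in\Z\}$. Define
\[
\widetilde\varphi\colon\widetilde{\rho_2}(\gamma)(\widetilde z)+n\;\longmapsto\;\widetilde{\rho_1}(\gamma)(\widetilde z)+n
\]
on $O_2$ and extend to a $\Z$-equivariant non-decreasing function $\widetilde\varphi\colon\R\to\R$ by $\widetilde\varphi(x):=\sup\{\widetilde\varphi(y)\colon y\in O_2,\,y\leq x\}$. The induced $\varphi\colon S^1\to S^1$ is then a left-semi-conjugacy from $\rho_1$ to $\rho_2$ by construction, and reversing the roles of the two actions produces a left-semi-conjugacy in the opposite direction, yielding the semi-conjugacy. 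The main obstacle is the well-definedness and monotonicity of $\widetilde\varphi$ on $O_2$, i.e., verifying that $\widetilde{\rho_2}(\alpha)(\widetilde z)\leq\widetilde{\rho_2}(\beta)(\widetilde z)+n$ forces $\widetilde{\rho_1}(\alpha)(\widetilde z)\leq\widetilde{\rho_1}(\beta)(\widetilde z)+n$; this is precisely where the \emph{bounded} nature of the Euler class is essential, since the bounded refinement is what forces $O_1$ and $O_2$ to share a common linear order. Once this is in hand, the remaining verifications (monotonicity and $\Z$-equivariance of the extension, and descent of the intertwining to $S^1$) are routine.
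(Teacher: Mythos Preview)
Your treatment of Part (ii) is correct and matches the paper's argument.

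\textbf{Part (iii) contains a genuine error.} The coboundary formula you propose does not hold. Take $(p_0,p_1,p_2)$ distinct and positively oriented in the $\rho_2$-orbit of $y_0$, with $\varphi(p_0)=\varphi(p_1)\neq\varphi(p_2)$. Since $\varphi$ preserves weak orientation, the image $(q,q,r)$ lies in the orbit $\mathcal O_3$, so $c_1=0=c_2$. But your cochain gives $b(p_0,p_1)=1$ and $b(p_0,p_2)=b(p_1,p_2)=0$, hence $\delta b(p_0,p_1,p_2)=0-0+1=1\neq c_1-c_2$. (The analogous case $\varphi(p_1)=\varphi(p_2)\neq\varphi(p_0)$ fails the same way, and ``all three collapse'' gives $\delta b=1$ versus $c_1-c_2=0$.) These configurations do occur whenever the stabiliser of $x_0$ in $\rho_1(\Gamma)$ is nontrivial, which is not excluded by fixed-point-freeness. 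The paper avoids this by first proving (Proposition~\ref{MainProposition}) that a non-constant semi-conjugacy $\varphi$ admits a good lift with constant $n_\gamma$, and then extracting a nonempty $\rho_2(\Gamma)$-invariant set $K\subset S^1$ on which $\varphi$ is \emph{injective}. Choosing the basepoint in $K$ forces $\rho_1^*c_{\varphi(x)}$ and $\rho_2^*c_x$ to agree \emph{pointwise}, not merely up to coboundary; this injectivity is exactly what makes the orbit-type case analysis go through.

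\textbf{Part (i): right shape, missing engine.} Your outline correctly identifies that equality of bounded (not just ordinary) Euler classes is what is needed, but the step you flag as ``the main obstacle'' is in fact the entire content of the implication, and the order-preservation claim as stated need not hold: agreement of obstruction cocycles does not force $\widetilde{\rho_2}(\alpha)(\widetilde z)\le\widetilde{\rho_2}(\beta)(\widetilde z)+n\Rightarrow\widetilde{\rho_1}(\alpha)(\widetilde z)\le\widetilde{\rho_1}(\beta)(\widetilde z)+n$ at the level of individual orbit points. The paper proceeds differently: working on the common central extension $\widetilde\Gamma$, it uses the translation-number characterisation (Proposition~\ref{TNumber}) to arrange $\widetilde{\rho_1}^*T=\widetilde{\rho_2}^*T$, deduces that $\widetilde{\rho_1}(g)^{-1}\widetilde{\rho_2}(g)$ has uniformly bounded translation number, and then sets
\[
\widetilde\varphi(x):=\sup_{g\in\widetilde\Gamma}\widetilde{\rho_1}(g)^{-1}\widetilde{\rho_2}(g)(x).
\]
This is automatically non-decreasing (sup of increasing maps), commutes with integer translations, and is equivariant by a one-line reindexing. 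No orbit-wise order comparison is needed.
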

Note that in the situation of (iii), $\rho_1$ and $\rho_2$ are actually semi-conjugate by (i). This proves the following result alluded to in the introduction, also proven in \cite[Proposition 1.4]{Matsumoto}.
\begin{cor}\label{mistake:small}
If a fixed point free circle action  $\rho_1$ is left-semi-conjugate to a circle action $\rho_2$, then they are semi-conjugate. In particular, left-semi-conjugacy defines an equivalence relation on the set of all fixed point free circle actions.
\end{cor}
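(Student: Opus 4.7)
The plan is to derive the corollary directly from Theorem \ref{GhysExplicit} together with the characterization of vanishing bounded Euler class in Corollary \ref{zeroprop}. For the first assertion, assume $\rho_1$ is fixed point free and left-semi-conjugate to $\rho_2$. Then by part (iii) of Theorem \ref{GhysExplicit} we have $\rho_1^*\mathrm{eu}_b = \rho_2^*\mathrm{eu}_b \neq 0$. Invoking part (i) of the same theorem, equality of bounded Euler classes implies that $\rho_1$ and $\rho_2$ are already semi-conjugate in the sense of Definition \ref{MainDef}, which is what was to be shown.

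For the second assertion I would verify the three axioms of an equivalence relation on the set of fixed point free circle actions. Reflexivity is immediate since the identity map of $S^1$ is a non-decreasing degree one map intertwining $\rho$ with itself. Transitivity follows from the fact, already noted after Definition \ref{Noddom}, that non-decreasing degree one maps are closed under composition: if $\varphi$ witnesses left-semi-conjugacy from $\rho_1$ to $\rho_2$ and $\psi$ from $\rho_2$ to $\rho_3$, then $\varphi \circ \psi$ witnesses left-semi-conjugacy from $\rho_1$ to $\rho_3$.

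Symmetry is the only non-trivial axiom. Suppose $\rho_1, \rho_2$ are both fixed point free and $\rho_1$ is left-semi-conjugate to $\rho_2$. By the first part of the corollary, $\rho_1$ and $\rho_2$ are semi-conjugate, so by Definition \ref{MainDef} $\rho_1$ is both left- and right-semi-conjugate to $\rho_2$, and in particular $\rho_2$ is left-semi-conjugate to $\rho_1$, as required. One small subtlety I should address explicitly: in applying the first part I must know that $\rho_2$ is also fixed point free, but this is a free byproduct of part (iii) of Theorem \ref{GhysExplicit} combined with Corollary \ref{zeroprop}. Namely, $\rho_2^*\mathrm{eu}_b = \rho_1^*\mathrm{eu}_b \neq 0$ forces $\rho_2(\Gamma)$ to have no global fixed point. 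Thus the equivalence relation is well-defined on the subset of fixed point free actions, and the proof is complete. The only real content here is the invocation of Theorem \ref{GhysExplicit}, and there is no essential additional obstacle to overcome.
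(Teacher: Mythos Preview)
Your proof is correct and matches the paper's approach exactly: the paper derives the corollary in one sentence by noting that part (iii) of Theorem~\ref{GhysExplicit} gives equal bounded Euler classes, after which part (i) gives semi-conjugacy. Your explicit verification of the equivalence-relation axioms is routine and the paper omits it; your ``subtlety'' paragraph is harmless but unnecessary, since on the set of fixed point free actions both $\rho_1$ and $\rho_2$ are assumed fixed point free from the outset, and the first part of the corollary only requires $\rho_1$ to be fixed point free anyway.
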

Part (ii) of Theorem \ref{GhysExplicit} follows directly from Corollary \ref{zeroprop}: If, say, $\rho_1$ has a fixed point, then it is semi-conjugate to the trivial circle action by the implication (iii) $\Rightarrow$ (iv), whence also $\rho_2$ is semi-conjugate to the trivial circle action and thus has a fixed point by the implication (iv) $\Rightarrow$ (iii). Then, by the implication 
(iii) $\Rightarrow$ (i) we have $\rho_1^*{\rm eu}_b = \rho_2^*{\rm eu}_b = 0$. Thus it remains to show only (i) and (iii), which we will do in the next two subsections.

\subsection{Same bounded Euler class implies semi-conjugacy}
In this subsection we are going to establish Part (i) of Theorem  \ref{GhysExplicit}. Our proof is a slight variation of Ghys' original proof, which emphasizes the similarity to the proof of Proposition \ref{VanishingBounded1}. 

To fix notation, let $\rho_1,\rho_2$ be circle actions with the same bounded Euler class $\rho_1^*{\rm eu}_b=\rho_2^*{\rm eu}_b$. We claim that $\rho_1$ and $\rho_2$ are semi-conjugate. By symmetry it suffices to show that $\rho_1$ is left-semi-conjugate to $\rho_2$.

Let $\widetilde{\Gamma}$ be the central extension of $\Gamma$ which corresponds to $\rho_1^*{\rm eu} = \rho_2^*{\rm eu}$. Then we can choose lifts $\widetilde{\rho_1}, \widetilde{\rho_2}$ making the diagram
\[\begin{xy}
\xymatrix{
0\ar[r]&\Z\ar[r]^i&\widetilde{H} \ar[r]^p&H \ar[r]&1\\
0\ar[r]&\Z\ar[u]_{\widetilde{\rho_j}}\ar[r]^i&\widetilde{\G}\ar[u]_{\widetilde{\rho_j}} \ar[r]&\G \ar[u]_{\rho_j} \ar[r]&1
}
\end{xy}\]
commute. Since $\rho_1^*{\rm eu}_b=\rho_2^*{\rm eu}_b$ and the diagrams commute we have
\[
[d\widetilde{\rho_1}^*T_\Z] = \widetilde{\rho_1}^*[dT_\Z] = -\widetilde{\rho_1}^*(p^*{\rm eu}_b) = -\widetilde{\rho_2}^*(p^*{\rm eu}_b) = \widetilde{\rho_2}^*[dT_\Z] = [d\widetilde{\rho_2}^*T_\Z].
\]
This implies that there exist a homomorphism $u: \widetilde{\Gamma} \to \Z$ and a bounded function $b: \widetilde{\Gamma} \to \Z$ such that
$\widetilde{\rho_1}^*T_\Z - \widetilde{\rho_2}^*T_\Z = u +b$. It follows that $\widetilde{\rho_1}^*T - \widetilde{\rho_2}^*T - u$ is a bounded homogeneous function, hence $0$. Thus,
\[
\widetilde{\rho_1}^*T - \widetilde{\rho_2}^*T = u.
\]
Replacing the lift $\widetilde{\rho_2}$ by $\widetilde{\rho_2}+i\circ u$ we can ensure that $u = 0$. Assume that $\widetilde{\rho_2}$ is chosen in that way. Then
for every $g \in \widetilde{H}$,
\[
|T(\widetilde{\rho}_1(g)^{-1}\widetilde{\rho}_2(g))|\leq |-T(\widetilde{\rho}_1(g))+T(\widetilde{\rho}_2(g))|+ D(T) = D(T),
\]
where $D(T)$ is the defect of the quasimorphism $T$. In particular, $\widetilde{\rho}_1(g)^{-1}\widetilde{\rho}_2(g)$ has uniformly bounded translation number and thus
\[
\widetilde{\varphi}(x):=\sup_{g\in \widetilde{\G}} (\widetilde{\rho}_1(g)^{-1}\widetilde{\rho}_2(g)(x))
\]
is well-defined. By definition we have for every $g_0 \in \widetilde{\G}$,
\begin{align*}
\widetilde{\varphi}(\widetilde{\rho}_2({g}_0)(x)) &=
\sup_{{g}\in{\widetilde{\G}}} \widetilde{\rho}_1({g})^{-1}(\widetilde{\rho}_2({g})(\widetilde{\rho}_2({g}_0)(x)))\\
%& = \sup_{{g}\in{\widetilde{\G}}} \widetilde{\rho}_1({g})^{-1}(\widetilde{\rho}_2({g}\, {g}_0)(x))\\
& = \sup_{{g}\in{\widetilde{\G}}} \widetilde{\rho}_1({g}\, {g}_0^{-1})^{-1}(\widetilde{\rho}_2({g})(x))\\
%& = \sup_{{g}\in{\widetilde{\G}}} \widetilde{\rho}_1({g_0})(\widetilde{\rho}_1({g})^{-1}(\widetilde{\rho}_2({g})(x)))\\
& = \widetilde{\rho}_1({g}_0)\left( \sup_{{g}\in{\widetilde{\G}}} \widetilde{\rho}_1({g})^{-1}(\widetilde{\rho}_2({g})(x))\right)\\
& = \widetilde{\rho}_1({g}_0)(\widetilde {\varphi}(x)).
\end{align*}
Moreover, being the supremum of increasing maps which commute with integral translations, the map $\widetilde{\varphi}\colon\R\to\R$ is non-decreasing and commutes with integral translations, so it is a good lift of a non-decreasing degree one map $\varphi\colon S^1\to S^1$.  It follows that $\varphi$ realizes the desired left-semi-conjugacy from $\rho_1$ to $\rho_2$. This finishes the proof of Part (i) of Theorem \ref{GhysExplicit}.

\begin{rem} \label{rem Ghys 2} Note that it now immediately follows that Condition (i) of Theorem \ref{ThmEquivalence} implies Ghys' condition stated as Condition (vi) in Remark \ref{rem Ghys 1}. Indeed, if the bounded Euler classes are equal, then so are the (unbounded) Euler classes and the map $\widetilde {\varphi}$ in the above proof gives the map required in Condition (vi).\end{rem}

\subsection{Semi-conjugacy implies same bounded Euler class}\label{SubsectionGhysiii}
In this subsection we establish the remaining Part (iii) of Theorem \ref{GhysExplicit} thereby finishing the proof of the theorem. Here we will finally make use of the third (geometric) characterization of the bounded Euler class. 

Instead of Theorem \ref{GhysExplicit}.(iii) we will actually prove a slightly stronger statement. To state this result we introduce the following notation. Throughout this section we will fix two circle actions $\rho_1, \rho_2$ of $\Gamma$ 
and a semi-conjugacy $\phi$ from $\rho_1$ to $\rho_2$. We will \emph{not} assume a priori that $\rho_1$ is fixed point free. For each $\gamma \in \Gamma$ we fix lifts $\widetilde{\rho_1}(\gamma)$ and $\widetilde{\rho_2}(\gamma)$ of $\rho_1(\gamma)$ respectively $\rho_2(\gamma)$. Suppose now that $\widetilde{\phi}$ is some good lift of $\phi$. 
Since $\widetilde{\rho_1}(\gamma)\widetilde{\phi}$ and $\widetilde{\phi} \widetilde{\rho_2}(\gamma)$ are lifts of the same map and are invariant under integral translations, there exists 
a map $n_\gamma: \R \to \Z$ (dependent on $\widetilde{\phi}$), invariant under integral translations, such that for all $x \in \R$,
\begin{equation}\label{ngamma}
\widetilde{\rho_1}(\gamma)\widetilde{\phi}(x) = \widetilde{\phi}(\widetilde{\rho_2}(\gamma)(x)) + n_{\gamma}(x).
\end{equation}

\begin{prop}\label{MainProposition} Let $\rho_1, \rho_2$ be circle-actions of $\Gamma$ and let $\phi$ be a semi-conjugacy from $\rho_1$ to $\rho_2$. Let a good lift $\widetilde{\phi}$ of $\phi$ be fixed and let $n_\gamma: \R \to \Z$ be defined by \eqref{ngamma}. Consider the following statements:
\begin{enumerate}[(1)]
\item $\rho_1(\Gamma)$ does not have a global fixed point in $S^1$.
\item $\phi$ is not the constant map.
\item There exists a good lift $\widetilde{\phi}$ of $\phi$ such that for each $\gamma \in \Gamma$ the map $n_\gamma$ given by \eqref{ngamma} is constant.
\item There exists a good lift $\widetilde{\phi}$ of $\phi$ such that $\widetilde{\rho_1}(\gamma)\widetilde{\phi}(x) = \widetilde{\phi}(\widetilde{\rho_2}(\gamma)(x))$ for all $\gamma \in \Gamma$ and $x \in \R$.
\item There exists a non-empty $\rho_2(\Gamma)$-invariant subset $K\subset S^1$ such that $\phi|_{K}$ is injective.
\item $\rho_1^*{\rm eu}_b=\rho_2^*{\rm eu}_b$.
\end{enumerate}
Then the implications (1)$\Rightarrow$(2)$\Rightarrow$(3)$\Rightarrow$(4)$\Rightarrow$(5)$\Rightarrow$(6) hold.
\end{prop}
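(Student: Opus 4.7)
The plan is to prove the chain of implications $(1)\Rightarrow(2)\Rightarrow(3)\Rightarrow(4)\Rightarrow(5)\Rightarrow(6)$ in order; a running observation used throughout is that $n_\gamma$ is integer valued (both sides of \eqref{ngamma} project to the common point $\rho_1(\gamma)\phi([x]) = \phi(\rho_2(\gamma)[x])$ of $S^1$) and $\Z$-periodic, and is the difference of two non-decreasing $\Z$-equivariant functions on $\R$. The first and third implications are essentially formal. For $(1)\Rightarrow(2)$ I argue by contrapositive: a constant $\phi \equiv y_0$ forces $\rho_1(\gamma)y_0 = \rho_1(\gamma)\phi(x) = \phi(\rho_2(\gamma)x) = y_0$ for every $\gamma$, exhibiting a global fixed point. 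For $(3)\Rightarrow(4)$, given constants $n_\gamma \equiv c_\gamma \in \Z$, I simply replace each lift $\widetilde{\rho_1}(\gamma)$ by the new lift $\widetilde{\rho_1}(\gamma) - c_\gamma$ of $\rho_1(\gamma)$, which turns \eqref{ngamma} into an exact commutation.

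The main technical step is $(2)\Rightarrow(3)$: the plan is to exhibit a specific good lift $\widetilde{\phi}$ for which every $n_\gamma$ is forced to be constant. First I would record that the set of discontinuities of $\phi$ in $S^1$ is $\rho_2(\Gamma)$-invariant, since the identity $\rho_1(\gamma)\phi = \phi\rho_2(\gamma)$ combined with continuity of the $\rho_i(\gamma)$ forces the one-sided limits of $\phi$ at $\rho_2(\gamma)x$ to be the $\rho_1(\gamma)$-images of those at $x$. Using non-constancy of $\phi$, I would then normalise the good lift by fixing $\widetilde{\phi}$ at a reference point where $\phi$ takes a ``strict'' value, and at the remaining jump points (all of size at most $1$) take the right-continuous choice. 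A jump-by-jump comparison of the two sides of \eqref{ngamma} will show that the integer-valued, $\Z$-periodic, right-continuous function $n_\gamma$ admits no jumps and is therefore constant. The hard part will be excluding jumps of $n_\gamma$ at the size-$1$ jumps of $\widetilde{\phi}$; this is where $\rho_2(\Gamma)$-invariance of the jump set, combined with the commutation of $\widetilde{\rho_1}(\gamma)$ with integer translations, yields cancellation of the two potential jumps, and where non-constancy of $\phi$ is indispensable to make the choice of good lift consistent.

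For $(4)\Rightarrow(5)$ I take $K := \{x \in S^1 : \phi^{-1}(\phi(x)) = \{x\}\}$, the ``injectivity set'' of $\phi$, on which $\phi$ is trivially injective. To verify that $K$ is $\rho_2(\Gamma)$-invariant I would use the semi-conjugacy relation $\rho_1(\gamma)^{-1}\phi = \phi\rho_2(\gamma)^{-1}$ together with (4): given $\phi(w) = \phi(\rho_2(\gamma)x)$ with $x \in K$, applying $\rho_1(\gamma)^{-1}$ yields $\phi(\rho_2(\gamma)^{-1}w) = \phi(x)$, and injectivity at $x$ forces $w = \rho_2(\gamma)x$. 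Non-emptiness of $K$ is immediate whenever $\phi$ has uncountable image (its complement is a countable union of plateau intervals of the lift $\widetilde{\phi}$); in the degenerate case of countable image, one would instead exhibit a $\rho_2(\Gamma)$-orbit whose $\rho_2$-stabiliser coincides with the $\rho_1$-stabiliser of its $\phi$-image, an equality that the exact commutation in (4) makes available.

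Finally, for $(5)\Rightarrow(6)$, I fix $x_0 \in K$ and invoke Corollary \ref{cor: e as cocycle on S1}: the homogeneous cocycle
\[
\rho_2^* c_{x_0}(\gamma_0,\gamma_1,\gamma_2) \;=\; c\bigl(\rho_2(\gamma_0)x_0,\,\rho_2(\gamma_1)x_0,\,\rho_2(\gamma_2)x_0\bigr)
\]
represents $\rho_2^*{\rm eu}_b$, and similarly $\rho_1^* c_{\phi(x_0)}$ represents $\rho_1^*{\rm eu}_b$. The semi-conjugacy $\rho_1(\gamma_i)\phi(x_0) = \phi(\rho_2(\gamma_i)x_0)$ rewrites the latter as $c\bigl(\phi(\rho_2(\gamma_0)x_0),\phi(\rho_2(\gamma_1)x_0),\phi(\rho_2(\gamma_2)x_0)\bigr)$, evaluated on a triple of points of $\phi(K)$. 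Since $K$ is $\rho_2(\Gamma)$-invariant, all of the $\rho_2(\gamma_i)x_0$ lie in $K$; since $\phi|_K$ is injective and $\phi$ is a non-decreasing degree one map (Definition \ref{Noddom}), $\phi$ preserves the $H$-orbit type of every triple of points of $K$ --- positive and negative orientations survive by monotonicity combined with injectivity, and the four degenerate types are preserved by injectivity alone. As the Euler cocycle $c$ is constant on $H$-orbits of triples, the two pullback cocycles agree on the nose, and $\rho_1^*{\rm eu}_b = \rho_2^*{\rm eu}_b$ follows.
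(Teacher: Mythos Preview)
Your treatment of $(1)\Rightarrow(2)$, $(3)\Rightarrow(4)$ and $(5)\Rightarrow(6)$ is correct and essentially identical to the paper's. The problems lie in the two substantive implications.

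\medskip

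\textbf{On $(2)\Rightarrow(3)$.} Your plan hinges on choosing a \emph{right-continuous} good lift of the given $\phi$. Such a lift need not exist. The value $\widetilde{\phi}(x)$ is determined modulo $\Z$ by $\phi([x])$, so at a jump point you cannot simply redefine $\widetilde{\phi}(x)$ to equal $\widetilde{\phi}(x^+)$ unless $\phi([x])=[\widetilde{\phi}(x^+)]$. For instance, with $\phi([x])=[0]$ on $[0,1/2]$ and $\phi([x])=[1/2]$ on $(1/2,1)$, every good lift has $\widetilde{\phi}(1/2)\in\Z$ while $\widetilde{\phi}(1/2^+)\in 1/2+\Z$; no right-continuous good lift exists (yet $\phi$ is a perfectly good semi-conjugacy between two trivial actions, where $(2)$ holds). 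Even setting this aside, the crucial claim that ``$n_\gamma$ admits no jumps'' is asserted rather than proved. The paper bypasses all of this by working with an \emph{arbitrary} good lift: from non-constancy one picks $a_0,b_0$ with $b_0-a_0\in(0,1)$ and $\widetilde{\phi}(b_0)-\widetilde{\phi}(a_0)\in(0,1)$, and a direct estimate shows $|n_\gamma(b_0)-n_\gamma(a_0)|<1$, hence $n_\gamma(a_0)=n_\gamma(b_0)$; the same estimate then pins down $n_\gamma(x)$ for every $x$ by comparing $x$ with a suitable translate of $a_0$ or $b_0$.

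\medskip

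\textbf{On $(4)\Rightarrow(5)$.} Your set $K=\{x:\phi^{-1}(\phi(x))=\{x\}\}$ can be empty: in the example above every fibre of $\phi$ is a nondegenerate interval. Your fallback for countable image (``exhibit an orbit whose stabilisers match'') is not a construction of the required $K$ and is not argued. The paper instead works fibrewise in $\R$: each fibre $S_x=\widetilde{\phi}^{-1}(\widetilde{\phi}(x))$ is a bounded interval, and one sets $\widetilde{K}_-=\{x:x=\inf S_x\}$ and $\widetilde{K}_+=\{x:x=\sup S_x\}$. Since $\R$ is connected, not all $S_x$ can be open, so at least one of $\widetilde{K}_\pm$ is non-empty; $\widetilde{\phi}$ is injective on each by construction; and the exact commutation from $(4)$ gives $\widetilde{\rho_2}(\gamma)(S_x)=S_{\widetilde{\rho_2}(\gamma)x}$, whence invariance. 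This works uniformly, including for step functions.
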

Note that the implication (1)$\Rightarrow$(6) gives Part (iii) of Theorem \ref{GhysExplicit}. 
\begin{proof}[Proof of Proposition \ref{MainProposition}] The implication (1)$\Rightarrow$(2) is obvious, so we turn directly to the proofs of the implications (2)$\Rightarrow$(3)$\Rightarrow$(4)$\Rightarrow$(5)$\Rightarrow$(6). 

Assume that (2) holds and fix $\gamma \in \Gamma$. Let $\widetilde{\phi}$ be a good lift of $\phi$. Since $\phi$ is non-constant we find distinct elements $a_0, b_0 \in \R$ with $b_0-a_0 \in (0,1)$ and $\widetilde{\phi}(b_0)-\widetilde{\phi}(a_0) \in (0,1)$.
Since $\widetilde{\rho_1}(\gamma)$ is strictly increasing and commutes with integral translations, this implies at once that 
\begin{equation}\label{equProp4.5}
0<\widetilde{\rho_1}(\gamma)(\widetilde{\phi}(b_0))-\widetilde{\rho_1}(\gamma)(\widetilde{\phi}(a_0))<1.
\end{equation} 
On the other hand, since $\widetilde{\phi}\circ \widetilde{\rho_2}(\gamma)$ is non-decreasing and commutes with integral translations, we also have 
$0\leq \widetilde{\phi}(\widetilde{\rho_2}(\gamma)(b_0))-\widetilde{\phi}(\widetilde{\rho_2}(\gamma)(a_0))\leq 1$. However, these inequalities must both be strict, because otherwise we would have
\[
\rho_1(\gamma)(\phi([b_0]))= \phi(\rho_2(\gamma)([b_0]))=\phi(\rho_2(\gamma)([a_0]))=\rho_1(\gamma)(\phi([a_0])),
\]
which contradicts (\ref{equProp4.5}).  We have thus shown that
\[
0<\widetilde{\rho_1}(\gamma)(\widetilde{\phi}(b_0))-\widetilde{\rho_1}(\gamma)(\widetilde{\phi}(a_0))<1\, ,\quad 
0<\widetilde{\phi}(\widetilde{\rho_2}(\gamma)(b_0))-\widetilde{\phi}(\widetilde{\rho_2}(\gamma)(a_0))<1 \ .
\]
Subtracting the second inequality from the first we deduce that
$n_\gamma(b_0)-n_\gamma(a_0)\in [0,1)-[0,1)=(-1,1)$. Since both are integers we deduce that $n_\gamma(b_0)=n_\gamma(a_0)$, which implies that
$n_\gamma$ is constant on $E := (a_0+\Z)\cup(b_0+\Z)$. 

Now let $x \in \R \setminus E$. Then the interval $(x, x+1)$ contains one translate of $a_0$ and one translate of $b_0$, and these take different values under $\widetilde{\phi}$. We thus find $e \in E$ with
$x<e<x+1$ and $\widetilde{\phi}(x) \neq \widetilde{\phi}(e)$, whence  $\{x-e, \widetilde{\phi}(x) - \widetilde{\phi}(e)\} \subset [0,1)$ and $n_\gamma(x) - n_\gamma(e)\in (-1,1)$, 
so that $n_\gamma(x)=n_\gamma(e)$. This finishes the proof of the implication (2)$\Rightarrow$(3).

Now assume that (3) holds, i.e. for every $\gamma \in \Gamma$ we have $n_\gamma(x) = n_\gamma$ for some constant $n_\gamma$. We can then replace the lift $\widetilde{\rho_1}(\gamma)$ by the lift $\widetilde{\rho_1}(\gamma) - n_\gamma$ and thereby achieve that for all $x \in \R$,
\begin{equation}\label{ngammaconst}
\widetilde{\rho_1}(\gamma)\widetilde{\phi}(x) = \widetilde{\phi}(\widetilde{\rho_2}(\gamma)(x)),\end{equation}
which is (4). 

We now deduce (5) from (4). Given $x_0 \in \R$ we define 
\[
S_{x_0} = \{x \in \R\mid \widetilde{\phi}(x) = \widetilde{\phi}(x_0)\} = \widetilde{\phi}^{-1}(\widetilde{\phi}(x_0)).
\]
Since $\widetilde{\phi}$ is increasing, the sets $S_{x_0}$ are connected, and since $\widetilde{\phi}$ commutes with integral translations we have $S_{x_0} \subset (x_0-1, x_0+1)$. In particular, each $S_{x_0}$ is bounded and if we define $\alpha(x_0):= \inf S_{x_0}$ and $\beta(x_0) := \sup S_{x_0}$, then
\[
S_{x_0}  \in \{(\alpha(x_0), \beta(x_0)), (\alpha(x_0), \beta(x_0)], [\alpha(x_0), \beta(x_0)), [\alpha(x_0), \beta(x_0)]\},
\]
is an open, half-closed or closed interval. Since $\mathbb{R}$ is connected, not all of these intervals can be open.
%for if $S_{x_0} = (\alpha, \beta)$ is open, then $\widetilde{\phi}(\alpha) < \widetilde{\phi}(x)$ for all $x>\alpha$, hence $S_{\alpha}$ is closed on the right. 
Thus the sets 
\[
\widetilde{K}_- := \{x \in \R\mid x = \inf S_x\} \quad \text{ and }\quad \widetilde{K}_+ := \{x \in \R\mid x = \sup S_x\}
\]
cannot both be empty (though it is easy to construct examples where one of them is empty).

We observe that the restriction $\widetilde{\phi}|_{\widetilde{K}_\pm}$ are both injective. Assume first that $x_1, x_2 \in \widetilde{K}_-$ and $\widetilde{\phi}(x_1) = \widetilde{\phi}(x_2)$. Then $S_{x_1} = S_{x_2}$ and thus
\[
x_1 = \inf S_{x_1} = \inf S_{x_2} = x_2,
\]
showing that  $\widetilde{\phi}|_{\widetilde{K}_-}$ is injective. Replacing $\inf$ by $\sup$, we deduce similarly that  $\widetilde{\phi}|_{\widetilde{K}_+}$ is injective.

%Now we claim that $\widetilde{K}_\pm$ are invariant under $\widetilde{\rho_2}(\gamma)$ for every $\gamma \in \Gamma$. We give the argument for $\widetilde{K}_-$, leaving the completely analogous argument for $\widetilde{K}_+$ to the reader. Thus let $x \in \widetilde{K}_-$ and $x' := \widetilde{\rho_2}(\gamma)x$ so that
%\[
%\widetilde{\phi}(x') = \widetilde{\phi}(\widetilde{\rho_2}(\gamma)x) = \widetilde{\rho_1}(\gamma)\widetilde{\phi}(x).
%\]
%Now assume $y \in S_{x'}$. Then
%\[
%\widetilde{\phi}(\widetilde{\rho_2}(\gamma)^{-1}y) = \widetilde{\rho_1}(\gamma)^{-1}\widetilde{\rho_1}(\gamma)\widetilde{\phi}(\widetilde{\rho_2}(\gamma)^{-1}y) =  \widetilde{\rho_1}(\gamma)^{-1}\widetilde{\phi}(y) =  \widetilde{\rho_1}(\gamma)^{-1}\widetilde{\phi}(x') = \widetilde{\phi}(x).
%\]
%Thus $\widetilde{\rho_2}(\gamma)^{-1}y \in S_x$ and hence $\widetilde{\rho_2}(\gamma)^{-1}y \geq x$. This in turn implies $y \geq x'$ and since $y \in S_{x'}$ was arbitrary we deduce that $x' = \inf S_{x'}$, whence $x' \in \widetilde{K}_-$.

%\todo{Here below is an alternative possibility (which I slightly prefer) for the previous paragraph, but choose freely what you prefer! Roberto}

Now we claim that $\widetilde{K}_\pm$ are invariant under $\widetilde{\rho_2}(\gamma)$ for every $\gamma \in \Gamma$. For this it suffices to check that $\widetilde{\rho_2}(\gamma)(S_x)=S_{\widetilde{\rho_2}(\gamma)x}$ for every $x\in\R$, $\gamma\in\Gamma$. This follows from the chain of equivalences
%$$
\begin{eqnarray*}
 y\in\widetilde{\rho_2}(\gamma)(S_x) & \Longleftrightarrow &  \widetilde{\rho_2}(\gamma^{-1})(y)\in S_x  \quad \Longleftrightarrow \quad  \widetilde{\varphi}(\widetilde{\rho_2}(\gamma^{-1})y)=\widetilde{\varphi}(x) \\
&\Longleftrightarrow&   \widetilde{\rho_1}(\gamma^{-1})\widetilde{\varphi}(y)=\widetilde{\varphi}(x) \quad  \Longleftrightarrow  \quad \widetilde{\varphi}(y)=\widetilde{\rho_1}(\gamma)\widetilde{\varphi}(x)=\widetilde{\varphi}(\widetilde{\rho_2}(\gamma)(x))\\
&\Longleftrightarrow& y\in S_{\widetilde{\rho_2}(\gamma)x}.
\end{eqnarray*}

Now let $K_\pm$ be the projections of $\widetilde{K}_{\pm}$ on $S^1$. Then $K_\pm$ are $\rho_2(\Gamma)$-invariant and $\phi$ is injective on both $K_+$ and $K_-$. Since at least one of these two sets is non-empty, this finishes the proof of the implication (4)$\Rightarrow$(5). 

Finally, we establish the implication (5)$\Rightarrow$(6): Let $K$ be as in (4) and let $x \in K$. By Lemma \ref{lemma f cocycle} the cohomology class $\rho_2^*{\rm eu}_b$ is represented by the cocycle 
\[
\rho_2^*c_x(g_0, g_1, g_2) = c(\rho_2(g_0)x, \rho_2(g_1)x, \rho_2(g_2)x).
\]
Note that for $j=0,1,2$ the points $\rho_2(g_j)x$ are all contained in $K$, since $K$ is $\rho_2(\Gamma)$-invariant. It thus follows from injectivity of $\phi$ on $K$ that they are pairwise distinct if and only if their images under $\phi$ are pairwise distinct. 
Since $\phi$ also preserves their weak orientation, we deduce that the triples $(\rho_2(g_0)x, \rho_2(g_1)x, \rho_3(g_2)x)$ and $(\widetilde{\phi}(\rho_2(g_0)x), \widetilde{\phi}(\rho_2(g_1)x), \widetilde{\phi}(\rho_2(g_2)x))$ are in the same $H$-orbit. Indeed, this follows from the classification of $H$-orbits on $(S^1)^3$ in Subsection \ref{explicit cocycles}. Since $c$ is $H$-invariant we obtain
\begin{eqnarray*}
\rho_2^*c_x(g_0, g_1, g_2) &=& c(\phi(\rho_2(g_0)x), \phi(\rho_2(g_1)x), \phi(\rho_2(g_2)x))\\
&=& c(\rho_1(g_0)\phi(x), \rho_1(g_1)\phi(x), \rho_1(g_2)\phi(x))\\
&=& \rho_1^*c_{\phi(x)}(g_0, g_1, g_2).
\end{eqnarray*}
Since the cocycle $\rho_1^*c_{\phi(x)}$ represents $\rho_1^*{\rm eu}_b$, we deduce that $\rho_1^*{\rm eu}_b = \rho_2^*{\rm eu}_b$. This finishes the proof.
\end{proof}
At this point we have finished the proof of Theorem \ref{GhysExplicit} and thereby of Theorem \ref{IntroMain}.
\begin{rem} In \cite[Equation (1), Proof of Proposition 5.2]{Ghys0}) our map $n_\gamma$ is denoted by $u(\gamma)$. It is assumed to be constant independently of whether $\phi$ is constant or not. The following example shows that this is not true in general. Let $\rho_1$ be the trivial circle action of $\Z$ and $\rho_2$ be the circle action sending $1$ to the rotation by $1/2$. Then $\rho_1$ is left semi-conjugate to $\rho_2$ by 
Proposition \ref{semiconj:rem} (ii). The left semi-conjugacy can be given by the constant map $\varphi(x)\equiv [0]$ which lifts to $\widetilde{\varphi}:x\mapsto \left\lfloor x \right\rfloor$. A lift of $\rho_1(1)$ is the identity 
and 
a lift of $\rho_2(1)$ is the translation $T_{1/2}$ by $1/2$. Then $\rho_1(1)\varphi=\varphi \rho_2(1)$ on the circle but the translation $x\mapsto\widetilde{\varphi}(x)-\widetilde{\varphi}(T_{1/2}(x))=  \left\lfloor x \right\rfloor- \left\lfloor x +1/2\right\rfloor$ depends on $x$ since it is $0$ for $x\in [0,1/2)+\Z$ and  $-1$ for $x\in [1/2,1)+\Z$. More generally, neither of the statements (2)--(5) 
is correct without the assumption that $\rho_1$ is fixed point free. For example, 
if $\rho_1$ has a fixed point then we can alway choose $\phi$ to be constant. In that case, every set $K \subset S^1$ on which $\phi$ is injective is a singleton. If ${\rho}_2(\Gamma)$ is fixed point free, then such a set cannot be invariant. The reader may check that in this case our set $K_2$ constructed in the proof is indeed a singleton, and that the proof of invariance breaks down in the absence of (3), e.g. in the situation of the example above.
\end{rem}

\subsection{The minimal case: Semi-conjugacy equals conjugacy}\label{Section Min Actions}
Recall that a circle action $\rho: \G \to \omeo$ is \emph{minimal} if every $\rho(\G)$-orbit is dense in $S^1$. The following proposition shows that for minimal circle actions, the notions of conjugacy and semi-conjugacy coincide. This implies in particular that Theorem \ref{GhysMinimal} follows from Theorem \ref{IntroMain}.
\begin{prop}[Ghys]\label{dense:orbits}
Let $\rho_1,\rho_2\colon \G\to \omeo$ be minimal circle actions. Then the following are equivalent:
\begin{enumerate}[(i)]
\item $\rho_1$ is left-semi-conjugate to $\rho_2$.
\item $\rho_1$ and $\rho_2$ are semi-conjugate.
\item $\rho_1$ and $\rho_2$ are conjugate.
\end{enumerate}
\end{prop}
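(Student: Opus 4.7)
The implications (iii)$\Rightarrow$(ii)$\Rightarrow$(i) are immediate from the definitions, since a conjugating homeomorphism is a non-decreasing degree one map providing both left- and right-semi-conjugacy, and semi-conjugacy is by definition in particular a left-semi-conjugacy. The substance therefore lies in (i)$\Rightarrow$(iii). Suppose $\varphi$ is a left-semi-conjugacy from $\rho_1$ to $\rho_2$ and fix a good lift $\widetilde{\varphi}\colon\R\to\R$. The plan is to deduce, from minimality of $\rho_1$ and $\rho_2$, that $\varphi$ is an orientation-preserving homeomorphism of $S^1$; then $\rho_1(\gamma)\varphi=\varphi\rho_2(\gamma)$ immediately gives the conjugacy $\rho_2=\varphi^{-1}\rho_1\varphi$.

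Since $\rho_1$ is minimal it has no global fixed point, and the intertwining relation makes $\varphi(S^1)$ a $\rho_1(\Gamma)$-invariant subset of $S^1$. In particular $\varphi$ cannot be constant, for otherwise its single-point image would be a global fixed point of $\rho_1(\Gamma)$. The closure of $\varphi(S^1)$ is a nonempty closed $\rho_1(\Gamma)$-invariant subset of $S^1$, so equals $S^1$ by minimality; thus $\varphi(S^1)$ is dense. Pulling back, $\widetilde{\varphi}(\R)$ is dense in $\R$, and for a non-decreasing function this is equivalent to continuity: any jump at $x_0$ would leave the open interval $(\widetilde{\varphi}(x_0^-),\widetilde{\varphi}(x_0^+))$ disjoint from $\widetilde{\varphi}(\R)$ apart from at most the single point $\widetilde{\varphi}(x_0)$, violating density. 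Combined with $\widetilde{\varphi}(x+1)=\widetilde{\varphi}(x)+1$, continuity of $\widetilde{\varphi}$ and the intermediate value theorem yield surjectivity of $\varphi$.

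For injectivity, let $\mathcal{I}$ be the collection of all maximal non-degenerate closed arcs $I\subset S^1$ on which $\varphi$ is constant. Using $\rho_1(\gamma)\varphi=\varphi\rho_2(\gamma)$, for each $I\in\mathcal{I}$ and each $\gamma\in\Gamma$ the map $\varphi$ is constant on $\rho_2(\gamma)I$, and maximality is preserved since otherwise applying $\rho_2(\gamma^{-1})$ to a larger enclosing plateau arc would violate maximality of $I$. Hence $\mathcal{I}$ is $\rho_2(\Gamma)$-invariant, and so are the open set $U=\bigcup_{I\in\mathcal{I}}\mathrm{int}(I)$ and its closed complement $C=S^1\setminus U$. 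By minimality of $\rho_2$ we have $C\in\{\emptyset,S^1\}$. If $C=\emptyset$, then the pairwise disjoint interiors of arcs in $\mathcal{I}$ cover the connected space $S^1$, which is possible only if there is a single $I\in\mathcal{I}$ with $I=S^1$, forcing $\varphi$ to be constant, a contradiction. Hence $C=S^1$, $\mathcal{I}=\emptyset$, and $\varphi$ is injective. A continuous bijection of the compact Hausdorff space $S^1$ is automatically a homeomorphism, and a non-decreasing degree one homeomorphism is orientation-preserving, so $\varphi\in\omeo$.

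The main obstacle is the injectivity step: one must construct the $\rho_2(\Gamma)$-invariant family of maximal plateau arcs purely from the left-semi-conjugacy relation, and then use minimality of $\rho_2$ to kill it via the standard closed-invariant-subset dichotomy, while simultaneously ruling out the degenerate case in which the plateau interiors exhaust $S^1$. The continuity and surjectivity assertions follow more routinely from density of $\varphi(S^1)$, which is where minimality of $\rho_1$ enters.
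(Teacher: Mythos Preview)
Your proof is correct and follows essentially the same route as the paper. Both arguments deduce continuity and surjectivity of $\varphi$ from minimality of $\rho_1$ (density of the image rules out jumps of the monotone lift $\widetilde\varphi$), and then use minimality of $\rho_2$ to force injectivity. The only cosmetic difference is in the injectivity step: the paper takes a single plateau interval $U$, transports it by $\rho_2(\Gamma)$ to conclude that $\varphi$ is locally constant everywhere, hence constant; you instead organize all maximal plateau arcs into a $\rho_2(\Gamma)$-invariant open set and invoke the closed-invariant-set dichotomy. These are two phrasings of the same mechanism. One minor remark: your cycle (iii)$\Rightarrow$(ii)$\Rightarrow$(i)$\Rightarrow$(iii) already yields (i)$\Leftrightarrow$(ii) directly, whereas the paper obtains that equivalence by quoting Corollary~\ref{mistake:small}; your route is self-contained in this respect.
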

\begin{proof} Since minimal actions are fixed point free, the equivalence (i)$\Leftrightarrow$(ii) follows from Corollary \ref{mistake:small}. Moreover, the implication (iii)$\Rightarrow$(i) holds trivially. Concerning the implication (i)$\Rightarrow$(iii) assume that $\rho_1$ is left-semi-conjugate to $\rho_2$ via $\phi$. Then the image of $\varphi$ is $\rho_1(\G)$-invariant, whence dense in $S^1$ by minimality. 
This in turn implies that the image of $\widetilde{\varphi}$ is dense in $\R$. So the map $\widetilde{\varphi}$, being non-decreasing and commuting with integral translations, 
is continuous and surjective. Therefore, the same is true for $\varphi$, and we are left to show that $\varphi$ is also injective.

Suppose by contradiction that there exist distinct points $x,y\in S^1$ such that $\varphi(x)=\varphi(y)$, and choose lifts $\widetilde{x},\widetilde{y}$ of $x,y$ in $\R$ such that $\widetilde{x}<\widetilde{y}<\widetilde{x}+1$. Since $\widetilde{\varphi}$ is non-decreasing and commutes with integral translations, 
we have either $\widetilde{\varphi}(\widetilde{y})=\widetilde{\varphi}(\widetilde{x})$ or $\widetilde{\varphi}(\widetilde{y})=\widetilde{\varphi}(\widetilde{x}+1)$. In any case, $\widetilde{\varphi}$ is constant on a non-trivial interval, so there 
exists an open subset $U\subseteq S^1$ such that $\varphi|_U$ is constant. 
Let now $x$ be an arbitrary point of $S^1$. By minimality of $\rho_2$ there exists $\gamma \in\G$ such that $\rho_2(\gamma )^{-1}(x)\in U$, and consequently $V:=\rho_2(\gamma )(U)$ is an open neighborhood of $x$. Now
\[
\varphi|_V=(\varphi\rho_2(\gamma ))|_U\circ \rho_2(\gamma )^{-1}|_V=
(\rho_1(\gamma )\varphi)|_U\circ \rho_2(\gamma )^{-1}|_V,
\]
whence $\varphi$ is locally constant. It follows that $\varphi$ is constant, and this contradicts the fact that $\varphi$ is surjective.
\end{proof}
We have now established Theorems \ref{GhysMinimal} and \ref{IntroMain} mentioned in the introduction.

\section{Variations and examples}\label{SecVariations}

\subsection{Circle actions of $\Z$ and the rotation number}\label{SecClassical}
Let us spell out a few special immediate consequences of Ghys' Theorem. We start with the case where $\G = \Z$. In this case a circle action $\rho: \G \to \omeo$ is given by a single invertible transformation $\rho(1) \in \omeo$. The action lifts to $\widetilde{\rho}: \Z \to \widetilde{H}$ and following Poincar\'e we define its rotation number as
\[
R(\rho) := T(\widetilde{\rho}(1)) \mod\Z,
\]
where $T$ is the real valued translation number defined in Section \ref{SubsecEuler2}. 
\begin{example}\label{Rotation}
Given $\alpha \in \R/\Z$ we denote by $R_\alpha \in \omeo$ the rotation by $\alpha$. Then the $\Z$-action $\rho$ with $\rho(1) = R_\alpha$ has rotation number $\alpha$. In particular, every rotation number can be realized by a rotation.
\end{example}

The fact that any $\Z$-action lifts is illustrated by $\rho^*(\eu)=0\in H^2(\Z;\Z)=\{0\}$. Thus, the unbounded Euler class cannot give any information for $\Z$-actions. The case of the bounded Euler class is much more interesting: 

\begin{cor}[Poincar\'e]\label{Poincare}
For circle actions $\rho_1, \rho_2: \Z \to \omeo$ the following are equivalent:
\begin{enumerate}[(i)]
\item $\rho_1$ and $\rho_2$ are semi-conjugate.
\item $\rho_1^*{\rm eu}_b = \rho_2^*{\rm eu}_b$.
\item $R(\rho_1) = R(\rho_2)$.
\end{enumerate}
In particular, Poincar\'e's rotation number is a complete semi-conjugacy invariant for circle actions of $\Z$ (and a complete conjugacy invariant for minimal $\Z$-actions).
\end{cor}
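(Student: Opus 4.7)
The plan is to reduce the corollary to Ghys' theorem (Theorem \ref{IntroMain}, already established as Theorem \ref{GhysExplicit}) together with a direct computation of the bounded Euler class for $\Z$-actions. Indeed, (i)$\Leftrightarrow$(ii) is Ghys' theorem applied to $\G = \Z$, so the only substantive content is the equivalence (ii)$\Leftrightarrow$(iii).

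To establish (ii)$\Leftrightarrow$(iii), first use that $H^2(\Z;\Z)=0$, so every circle action $\rho: \Z \to H$ admits a homomorphic lift $\widetilde{\rho}: \Z \to \widetilde{H}$, uniquely determined by the choice of $g:=\widetilde{\rho}(1)$ in the fiber above $\rho(1)$. By Proposition \ref{TNumber},
\[ \rho^*{\rm eu}_b = -\widetilde{\rho}^*[dT_\Z] = -[d(T_\Z \circ \widetilde{\rho})] \in H^2_b(\Z;\Z), \]
where the integral quasimorphism $n \mapsto T_\Z(g^n)$ has real-valued homogeneization equal to the homomorphism $n \mapsto n\, T(g)$, by homogeneity of the Poincar\'e translation number $T$.

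The second step is the algebraic observation that for $\Z$-actions, two integral quasimorphisms $T_1,T_2: \Z \to \Z$ define the same class in $H^2_b(\Z;\Z)$ if and only if their real-valued homogeneizations $t_1,t_2$ satisfy $t_1(1)-t_2(1)\in \Z$. This follows from the general criterion recalled in Subsection 3.1, namely $[dT_1]=[dT_2]$ iff $T_1-T_2 \in {\rm Hom}(\Z;\Z)\oplus {\rm Map}_b(\Z;\Z)$, combined with the observation that integer-valued homomorphisms $\Z\to\Z$ are exactly multiplication by integers. Applied to $T_j=T_\Z \circ \widetilde{\rho_j}$ with $g_j := \widetilde{\rho_j}(1)$, this gives $\rho_1^*{\rm eu}_b=\rho_2^*{\rm eu}_b$ iff $T(g_1)-T(g_2)\in \Z$ iff $R(\rho_1)=R(\rho_2)$ in $\R/\Z$. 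Both sides are manifestly independent of the chosen lifts, since changing $\widetilde{\rho_j}$ shifts $T(g_j)$ by an integer.

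For the final ``in particular'' statement, minimal $\Z$-actions are automatically fixed point free, and Proposition \ref{dense:orbits} asserts that semi-conjugate minimal circle actions are conjugate; hence the equivalence (i)$\Leftrightarrow$(iii) upgrades to conjugacy in the minimal case. I expect the main conceptual obstacle to be keeping track of sign and lift conventions in the identification of $\rho^*{\rm eu}_b$ with the rotation number; once the general criterion $[dT_1]=[dT_2]$ iff $t_1-t_2$ is an integer-valued homomorphism is in hand, the computation itself is immediate because $\Z$ is abelian, so every real-valued quasimorphism on $\Z$ homogeneizes to an honest homomorphism $\Z \to \R$ determined by its value at $1$.
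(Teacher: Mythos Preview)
Your proposal is correct and follows essentially the same approach as the paper: reduce (i)$\Leftrightarrow$(ii) to Ghys' theorem, then for (ii)$\Leftrightarrow$(iii) use Proposition~\ref{TNumber} to write $\rho_j^*{\rm eu}_b = -[d(\widetilde{\rho_j}^*T_\Z)]$, invoke the criterion $[dT_1]=[dT_2]$ iff $T_1-T_2\in{\rm Hom}(\Z;\Z)\oplus{\rm Map}_b(\Z;\Z)$, pass to homogeneizations, and evaluate at $1$. The paper's proof is organized identically; your version is slightly more explicit about the existence of the lift via $H^2(\Z;\Z)=0$ and about independence of the choice of lift, but there is no substantive difference.
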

\begin{proof} The equivalence (i)$\Leftrightarrow$(ii) is a special case of Theorem \ref{IntroMain}. For $j=1,2$ we have
\[
\rho_j^*{\rm eu}_b = \widetilde{\rho_j}^*p^*{\rm eu}_b =  -\widetilde{\rho_j}^*[dT_\Z] =-[d \widetilde{\rho_j}^*T_\Z], 
\]
whence (ii) is equivalent to $[d(\widetilde{\rho_1}^*T_\Z - \widetilde{\rho_2}^*T_\Z )] = 0$. This in turn means that there exists a homomorphism $f \in {\rm Hom}(\Z, \Z)$ such that the quasimorphism $\widetilde{\rho_1}^*T_\Z - \widetilde{\rho_2}^*T_\Z - f$ is bounded. Now using the fact that a quasimorphism is bounded if and only if its homogeneization is trivial we see that the latter condition is equivalent to
\[
\widetilde{\rho_1}^*T - \widetilde{\rho_2}^*T  = f \in{\rm Hom}(\Z, \Z).
\]
Since two homogeneous functions on $\Z$ agree iff they agree on $1$ we see that
this condition is equivalent to
\[
\widetilde{\rho_1}^*T(1) - \widetilde{\rho_2}^*T(1) =T(\widetilde{\rho_1}(1))-T(\widetilde{\rho_2}(1))\in \Z,
\]
i.e. $R(\rho_1) = R(\rho_2)$.
\end{proof}

\subsection{The Hirsch-Thurston theorem}
Let us denote by ${\rm Rot}(S^1) \cong \R/\Z$ the subgroup of $\omeo$ given by rotations. A circle actions which factors through ${\rm Rot}(S^1)$ will be called a \emph{rotation action}. It follows from Example \ref{Rotation} and Corollary \ref{Poincare} that every $\Z$-action is semi-conjugate to a rotation action. This is more generally true for actions of amenable groups; 
the corresponding result is usually attributed to Hirsch and Thurston (see e.g. \cite{Calegari_Book}), since it can be derived easily from results in \cite{HirschThurston}.

\begin{cor}[Hirsch-Thurston]\label{AmenableGroups} Every circle action $\rho: \G \to \omeo$ of an amenable group is semi-conjugate to a rotation action.
\end{cor}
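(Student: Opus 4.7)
The plan is to bring Ghys' Theorem \ref{IntroMain} to bear: since semi-conjugacy is detected by the bounded Euler class, it suffices to exhibit, for any given $\rho: \Gamma \to \omeo$, a homomorphism $\alpha: \Gamma \to \R/\Z$ such that the rotation action $\rho_\alpha(\gamma) := R_{\alpha(\gamma)}$ satisfies $\rho_\alpha^*{\rm eu}_b = \rho^*{\rm eu}_b$. So the task reduces to showing that every class in $H^2_b(\Gamma;\Z)$ that can arise from a circle action already arises from a rotation action.

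The key input is the vanishing of real bounded cohomology in positive degrees for amenable groups (Johnson/Gromov), which is exactly the fact already used inside Lemma \ref{InjectivityExtension}. Plugging $H^2_b(\Gamma;\R) = 0$ into the Gersten long exact sequence induced by $0 \to \Z \to \R \to \R/\Z \to 0$, the connecting homomorphism
$$\delta: \mathrm{Hom}(\Gamma, \R/\Z) \longrightarrow H^2_b(\Gamma;\Z)$$
becomes surjective. I would therefore pick $\alpha \in \mathrm{Hom}(\Gamma, \R/\Z)$ with $\delta(\alpha) = \rho^*{\rm eu}_b$, and then argue that the rotation action built from $\alpha$ is the desired target.

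The remaining step, which I expect to be the only piece requiring a small computation, is to verify that for this $\alpha$ one indeed has $\rho_\alpha^*{\rm eu}_b = \delta(\alpha)$. This is natural: using the special section $\sigma$ of $p: \widetilde{H} \to H$ with $E_\sigma \subseteq [0,1)$, the lift $\sigma(R_{\alpha(\gamma)})$ is just translation by the canonical representative $\tilde\alpha(\gamma) \in [0,1)$ of $\alpha(\gamma)$, and formula \eqref{FirstFromulacsigma} evaluates $\rho_\alpha^*c_\sigma(\gamma_1,\gamma_2)$ as the ``carry bit'' of $\tilde\alpha(\gamma_1) + \tilde\alpha(\gamma_2)$ modulo one. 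This carry bit is precisely the integer-valued cocycle $d\tilde\alpha$ representing $\delta(\alpha)$, so $\rho_\alpha^*{\rm eu}_b = \delta(\alpha) = \rho^*{\rm eu}_b$, and Theorem \ref{IntroMain} supplies the semi-conjugacy.

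The main obstacle is essentially bookkeeping: matching the combinatorial formula \eqref{FirstFromulacsigma} for the obstruction cocycle of $\rho_\alpha$ with the standard description of the Bockstein-type connecting homomorphism on $\mathrm{Hom}(\Gamma, \R/\Z)$. Once this identification is made, the argument is conceptually clean, with the entire analytic content packaged into the vanishing of $H^2_b(\Gamma;\R)$ for amenable $\Gamma$.
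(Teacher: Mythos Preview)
Your proposal is correct and follows essentially the same approach as the paper's own proof: both use the vanishing of $H^2_b(\Gamma;\R)$ for amenable $\Gamma$ to make the Gersten connecting map $\delta:\mathrm{Hom}(\Gamma,\R/\Z)\to H^2_b(\Gamma;\Z)$ surjective (the paper actually notes it is an isomorphism), pick a preimage $\alpha$ of $\rho^*{\rm eu}_b$, and then invoke Ghys' Theorem once one checks that the rotation action $\rho_\alpha$ has bounded Euler class $\delta(\alpha)$. The only difference is cosmetic: where the paper dismisses this last identification as a ``standard diagram chase'', you spell it out via the carry-bit computation with formula~\eqref{FirstFromulacsigma}.
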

\begin{proof} By a classical result of Trauber (see e.g.~\cite{Gromov, Ivanov}) the bounded cohomology of $\G$ with \emph{real} coefficient vanishes. Thus the connecting homomorphism
\[
\delta: H^1(\Gamma; \R/\Z) \to H^2_b(\Gamma; \Z)
\]
of the Gersten exact sequence (see \cite[Prop. 8.2.12]{Monod}) is an isomorphism. Let $\alpha := \rho^*{\rm eu}_b \in H^2_b(\Gamma; \Z)$ and $\beta := \delta^{-1}(\alpha)$. Then under the isomorphism $H^1(\Gamma; \R/\Z) \cong {\rm Hom}(\Gamma, \R/\Z) =  {\rm Hom}(\Gamma, {\rm Rot}(S^1))$ the class $\beta$ corresponds to a homomorphism $\rho': \G \to  {\rm Rot}(S^1)$. 
Now a standard diagram chase shows that $(\rho')^*{\rm eu}_b = \delta(\beta) = \rho^*{\rm eu}_b$, whence $\rho$ and $\rho'$ are semi-conjugate.
\end{proof}

\subsection{Real bounded Euler class}\label{RBEC}
In many applications, computations in integral bounded cohomology are difficult, and thus one relies on real bounded cohomology. The image of ${\rm eu}_b$ in $H^2_b(H; \R)$ under the change of coefficients map $H^2_b(H; \Z) \to H^2_b(H; \R)$ is called the \emph{real bounded Euler class} and denoted ${\rm eu}_b^\R$. Corollary \ref{VanishingBounded1}
has the following real counterpart:
\begin{cor}\label{VanishingRealEulerClass} Let $\rho: \G \to \omeo$ be a circle action with $\rho^*{\rm eu}_b^\R = 0$. Then $\rho([\G, \G])$ fixes a point on $S^1$.
\end{cor}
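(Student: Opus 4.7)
The plan is to mimic the proof of the Hirsch--Thurston theorem (Corollary~\ref{AmenableGroups}) and then restrict the resulting semi-conjugacy to the commutator subgroup. First I would exploit exactness of the Gersten sequence used there: since $\rho^*\mathrm{eu}_b^\R = 0$, the class $\rho^*\mathrm{eu}_b \in H^2_b(\Gamma;\Z)$ lies in the kernel of the change-of-coefficients map, hence in the image of the connecting homomorphism $\delta\colon {\rm Hom}(\Gamma,\R/\Z)\to H^2_b(\Gamma;\Z)$. So there exists a homomorphism $\chi\colon\Gamma\to\R/\Z$ with $\delta(\chi)=\rho^*\mathrm{eu}_b$. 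Identifying $\R/\Z$ with the rotation subgroup ${\rm Rot}(S^1)\subset\omeo$, the homomorphism $\chi$ defines a rotation action $\rho_\chi\colon\Gamma\to\omeo$, and the same one-line diagram chase used in the proof of Corollary~\ref{AmenableGroups} gives $\rho_\chi^*\mathrm{eu}_b=\delta(\chi)=\rho^*\mathrm{eu}_b$.

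Next I would apply Theorem~\ref{IntroMain}: since $\rho$ and $\rho_\chi$ have the same bounded Euler class, they are semi-conjugate, so there exists a non-decreasing degree one map $\varphi\colon S^1\to S^1$ with
\[\rho(\gamma)\varphi = \varphi\rho_\chi(\gamma) \quad \text{for every } \gamma\in\Gamma.\]
Because $\R/\Z$ is abelian, $\chi$ vanishes on $[\Gamma,\Gamma]$, so $\rho_\chi(\gamma)=\mathrm{id}$ for every $\gamma\in[\Gamma,\Gamma]$. Restricting the intertwining relation to $[\Gamma,\Gamma]$ therefore yields $\rho(\gamma)\varphi=\varphi$ for all $\gamma\in[\Gamma,\Gamma]$, i.e.\ $\varphi$ is a left-semi-conjugacy from $\rho|_{[\Gamma,\Gamma]}$ to the trivial circle action. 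Proposition~\ref{semiconj:rem}(iii) then immediately produces a global fixed point for $\rho([\Gamma,\Gamma])$.

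There is essentially no serious obstacle: the whole proof is a short naturality argument using machinery already developed, and the mildly delicate point is just the verification that the rotation action associated with $\chi$ has bounded Euler class $\delta(\chi)$, which is exactly the (implicit) step from the Hirsch--Thurston proof. Note that it is \emph{not} enough to argue that $(\rho|_{[\Gamma,\Gamma]})^*\mathrm{eu}_b$ vanishes in real coefficients and then try to invoke Corollary~\ref{zeroprop} directly, because the kernel of the real/integral comparison map for $[\Gamma,\Gamma]$ need not be trivial; the semi-conjugacy produced globally on $\Gamma$ is what lets us bypass this issue.
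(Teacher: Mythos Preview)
Your proposal is correct and follows essentially the same route as the paper: use the Gersten sequence to replace $\rho$ by a semi-conjugate rotation action, then exploit that rotations kill $[\Gamma,\Gamma]$. The only cosmetic difference is the very last step: the paper passes back through bounded cohomology (semi-conjugacy $\Rightarrow (\rho|_{[\Gamma,\Gamma]})^*\mathrm{eu}_b=(\rho'|_{[\Gamma,\Gamma]})^*\mathrm{eu}_b=0$, then Corollary~\ref{zeroprop}), whereas you use the semi-conjugacy map $\varphi$ directly and invoke Proposition~\ref{semiconj:rem}(iii), which is marginally more direct but amounts to the same thing.
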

\begin{proof} Since $\rho^*{\rm eu}_b^\R = 0$ we can argue as in the proof of Corollary \ref{AmenableGroups} and prove that $\rho$ is semi-conjugate to an action $\rho': \G \to {\rm Rot}(S^1) < \omeo$. In particular, $\rho|_{[\G, \G]}$ is semi-conjugate to $\rho'|_{[\G, \G]}$. Now since ${\rm Rot}(S^1)$ is abelian, $\rho'$ vanishes on $[\G, \G]$. It follows that $(\rho|_{[\G,\G]})^*{\rm eu}_b = (\rho'|_{[\G,\G]})^*{\rm eu}_b = 0$, 
whence $\rho([\G, \G])$ fixes a point on $S^1$ by Corollary \ref{VanishingBounded1}.
\end{proof}

\section{Alternative characterizations of semi-conjugacy}\label{SecEquivalence}

\subsection{Regularity of semi-conjugacies}\label{Regularity}
Having established Theorems \ref{GhysMinimal} and \ref{IntroMain} and some of their consequences, we now return to the characterizations of semi-conjugacy given in Theorem \ref{ThmEquivalence} of the introduction. We start by discussing the issue of regularity of semi-conjugacies. In general, if two circle actions $\rho_1$ and $\rho_2$ are semi-conjugate it does \emph{not} follow that they are semi-conjugate via continuous left-semi-conjugacies.
 A concrete counterexample is given as follows.

\begin{example}\label{example not continuous} Let $\rho_1$ be the action of $\Z$ given by sending the generator $1$ to the rotation by $\pi$. Let $\rho_2$ be an action of $\Z$ with rotation number $\frac{1}{2}$ for which $\rho_2(2)$ has precisely two fixed points. For example, the generator could be sent to 
the fixed point free lift of a parabolic isometry to the double cover of $S^1=\partial \mathbb{H}^2$. Both actions have rotation number $1/2$, so that they are semi-conjugate, say, 
$\rho_1$ is right-semi-conjugate to $\rho_2$ via $\varphi:S^1\rightarrow S^1$. By definition, $\varphi$ sends orbits for the $\rho_1$-action to orbits for the $\rho_2$-action. Now all $\rho_1$ orbits have precisely two points, while only one $\rho_2$ orbit has two points (and the other orbits have infinite order). It follows that the image of $\varphi$ is equal to the unique $\rho_2$ orbit consisting of two points, hence the map $\varphi$ cannot be continuous. 
Even worse, the semi-conjugacy $\varphi':S^1\rightarrow S^1$ in the opposite direction, i.e. from $\rho_1$ to $\rho_2$ cannot be chosen continuous either. Indeed, let $\{x_1, x_2\}$ be the unique $\rho_2$-orbit containing two points. Then
$\phi'$ has to send $x_1$ and $x_2$ to a pair of antipodal points $y,\overline{y}$. Now restrict to the index two subgroup $2\Z<\Z$ and look at the restricted orbits: The restricted $\rho_1$-action is trivial, so orbits for the restricted $\rho_2$-action have to be sent to points. But $x_1$ and $x_2$ are accumulation points of the same restricted $\rho_2$-orbit, which is all mapped to a point $z$.
 Then $z$ cannot be both equal to $y$ and $\overline{y}$, so that $\varphi'$ is not continuous.  
 \end{example}

Things get better if we replace continuity with the less demanding notion of semicontinuity. 
Recall that
a non-decreasing degree one map $\varphi\colon S^1\to S^1$ is called \emph{upper semicontinuous} if it admits an upper semicontinuous good lift $\widetilde{\varphi}\colon\R\to\R$.
Indeed we can show:
\begin{lemma}\label{uppersemi}
If a circle action $\rho_1: \G\to H$ is left-semi-conjugate to a circle action $\rho_2: \G\to H$, then it is left-semi-conjugate to $\rho_2$ via an upper semicontinuous map $\varphi'\colon S^1\to S^1$.
\end{lemma}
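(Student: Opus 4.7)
The plan is to regularize a given good lift of $\varphi$ by passing to right-limits. Starting from any good lift $\widetilde{\varphi}\colon\mathbb{R}\to\mathbb{R}$ of the left-semi-conjugacy $\varphi$, I would define
\[
\widetilde{\varphi}'(x):=\lim_{y\to x^{+}}\widetilde{\varphi}(y)=\inf_{y>x}\widetilde{\varphi}(y),
\]
which exists because $\widetilde{\varphi}$ is non-decreasing. The first step is to verify that $\widetilde{\varphi}'$ is still a good lift, i.e.\ non-decreasing and satisfying $\widetilde{\varphi}'(x+1)=\widetilde{\varphi}'(x)+1$; both are routine substitutions. By construction $\widetilde{\varphi}'$ is right-continuous, and since for a non-decreasing function on $\mathbb{R}$ right-continuity is equivalent to upper semicontinuity (the $\limsup$ from both sides coincides with the right limit), the map $\varphi'\colon S^1\to S^1$ induced by $\widetilde{\varphi}'$ is an upper semicontinuous non-decreasing degree one map.

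The substantive step is showing that $\varphi'$ is still a left-semi-conjugacy from $\rho_1$ to $\rho_2$. Fix lifts $\widetilde{\rho_j}(\gamma)\in\widetilde{H}$ for $j=1,2$; from $\rho_1(\gamma)\varphi=\varphi\rho_2(\gamma)$ we obtain the integer-valued function $n_\gamma\colon\mathbb{R}\to\mathbb{Z}$ of equation \eqref{ngamma}. Applying the continuous homeomorphism $\widetilde{\rho_1}(\gamma)$ to the definition of $\widetilde{\varphi}'$ lets me commute it past the limit, and then using strict monotonicity and continuity of $\widetilde{\rho_2}(\gamma)$, the image of a right neighborhood of $x$ is a right neighborhood of $\widetilde{\rho_2}(\gamma)(x)$, so the $\widetilde{\varphi}$-term converges to $\widetilde{\varphi}'(\widetilde{\rho_2}(\gamma)(x))$. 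Both functions on the right side of \eqref{ngamma} thus admit right limits at $x$, so by subtraction $n_\gamma(y)$ has a right limit at $x$; but $n_\gamma$ is integer-valued, so this limit forces $n_\gamma(y)$ to be \emph{constantly} equal to some $n'_\gamma(x)\in\mathbb{Z}$ on a right half-neighborhood of $x$. Hence
\[
\widetilde{\rho_1}(\gamma)\,\widetilde{\varphi}'(x)=\widetilde{\varphi}'\bigl(\widetilde{\rho_2}(\gamma)(x)\bigr)+n'_\gamma(x),
\]
which projects to the desired identity $\rho_1(\gamma)\varphi'=\varphi'\rho_2(\gamma)$ on $S^1$.

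The step I expect to require the most care is the commutation of the right-limit with the two lifted homeomorphisms and, in particular, justifying that the integer defect $n_\gamma$ has a right limit at every point. This is not an a priori fact about $n_\gamma$, but it comes for free once both $\widetilde{\rho_1}(\gamma)\widetilde{\varphi}$ and $\widetilde{\varphi}\circ\widetilde{\rho_2}(\gamma)$ are seen to have right limits by monotonicity, making the argument self-contained. A completely symmetric construction using $\widetilde{\varphi}''(x):=\lim_{y\to x^{-}}\widetilde{\varphi}(y)$ produces a \emph{lower} semicontinuous left-semi-conjugacy, so the corresponding clause of Theorem~\ref{ThmEquivalence} follows by the same argument.
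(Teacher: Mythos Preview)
Your argument is correct and in fact slightly cleaner than the paper's. Two points of comparison are worth noting.

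First, the regularization: you take the right limit $\widetilde{\varphi}'(x)=\inf_{y>x}\widetilde{\varphi}(y)$, which is right-continuous and therefore (for a non-decreasing function) upper semicontinuous, matching the statement of the lemma. The paper instead sets $\widetilde{\varphi}'(x)=\sup_{y<x}\widetilde{\varphi}(y)$, which is left-continuous and hence \emph{lower} semicontinuous; so strictly speaking the paper's construction proves the companion statement rather than the one asserted. Either version suffices for Theorem~\ref{ThmEquivalence}, and the two are entirely symmetric, but your choice aligns with the lemma as stated.

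Second, and more substantively, the paper's proof of the equivariance of $\varphi'$ invokes the implication (2)$\Rightarrow$(3) of Proposition~\ref{MainProposition}: when $\phi$ is non-constant, the integer defect $n_\gamma$ is \emph{globally} constant, and this global constant is then carried through the supremum. This requires the separate treatment of the constant case and an appeal to an earlier, nontrivial argument. Your route is more self-contained: you observe that both $\widetilde{\rho_1}(\gamma)\widetilde{\varphi}$ and $\widetilde{\varphi}\circ\widetilde{\rho_2}(\gamma)$ are non-decreasing, hence have right limits everywhere, so their difference $n_\gamma$ also has a right limit at every point; being integer-valued, $n_\gamma$ is then constant on a right half-neighborhood, and that constant is exactly the defect $n'_\gamma(x)$ for $\widetilde{\varphi}'$. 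No case distinction and no appeal to Proposition~\ref{MainProposition} are needed. The paper's approach buys you the extra information that $n_\gamma$ is globally constant (useful elsewhere), but for the present lemma your local argument is sufficient and more direct.
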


\begin{proof} Let $\phi$ be an arbitrary left-semi-conjugacy from $\rho_1$ to $\rho_2$. If $\phi$ is constant then there is nothing to show, hence we may assume that $\phi$ is non-constant. We then define $\widetilde{\phi}$, $\widetilde{\rho_1}$, $\widetilde{\rho_2}$ and $n_\gamma$ as in the beginning of Subsection \ref{SubsectionGhysiii} and also define a new function $\widetilde{\varphi}': \R \to \R$ by
\[
\widetilde{\varphi}'(x):=\sup \{\widetilde{\varphi}(y)\, |\, y<x\}.
\]
Since $\widetilde{\varphi}'$ is non-decreasing and commutes with integral translations, it is the good lift of a non-decreasing degree one map $\varphi'\colon S^1\to S^1$. We claim that $\varphi'$ is a left-semi-conjugacy from $\rho_1$ to $\rho_2$.

In order to prove our claim we fix $\gamma \in \Gamma$ and abbreviate $\widetilde{h_j} := \widetilde{\rho_j}(\gamma) \in \widetilde{H}$ for $j=1,2$. By \eqref{ngamma} we have for every $y \in \R$,
\begin{equation}\label{e4}
\widetilde{h_1} \widetilde{\varphi}(y)=\widetilde{\varphi}(\widetilde{h_2} (y))+n_{\gamma}(y).
\end{equation}
By the implication (2)$\Rightarrow$(3) in Proposition \ref{MainProposition}, there exists an integer $m \in \mathbb N$ such that $n_\gamma \equiv m$. Now  for every $x \in \R$ we have
\begin{eqnarray*}
\widetilde{h_1} \widetilde{\varphi}'(x) &=&
\widetilde{h_1} (\sup \{\widetilde{\varphi}(y)\mid y<x\})=
\sup \{\widetilde{h_1} \widetilde{\varphi}(y)\mid y<x\}\\
&=& \sup\{\widetilde{\varphi}(\widetilde{h_2} (y))+m\mid y<x\}= \sup\{\widetilde{\varphi}(y)\mid y<\widetilde{h_2} (x)\}+m\\
&=& \widetilde{\varphi}'(\widetilde{h_2} (x))+m,
\end{eqnarray*}
which implies that $\rho_1(\gamma)\varphi'=\varphi'\rho_2(\gamma)$, and concludes the proof.
\end{proof}

\subsection{Proof of Theorem \ref{ThmEquivalence}} The equivalences (i)$\Leftrightarrow$(ii)$\Leftrightarrow$(v) of Theorem \ref{ThmEquivalence} are immediate from Theorem \ref{GhysExplicit}. The equivalence (i)$\Leftrightarrow$(iii)$\Leftrightarrow$(iv) of Theorem \ref{ThmEquivalence} follows from the following corollary of Proposition \ref{MainProposition}. 
\begin{cor}\label{Bucher} For circle actions $\rho_1$ and $\rho_2$ the following are equivalent:
\begin{enumerate}[(i)]
\item There exists a left-semi-conjugacy from $\rho_1$ to $\rho_2$ which satisfies Property (4) of Proposition \ref{MainProposition}.
\item There exists a left-semi-conjugacy from $\rho_1$ to $\rho_2$ which satisfies Property (5) of Proposition \ref{MainProposition}.
\item $\rho_1$ and $\rho_2$ are semi-conjugate.
\end{enumerate}
\end{cor}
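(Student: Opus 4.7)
The plan is to treat the two easy implications as a short reduction and then concentrate on (iii) $\Rightarrow$ (i), which naturally splits into two cases depending on the existence of a global fixed point.

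For (i) $\Rightarrow$ (ii) I would simply invoke the implication (4) $\Rightarrow$ (5) of Proposition \ref{MainProposition}: any left-semi-conjugacy $\phi$ satisfying (4) automatically satisfies (5). For (ii) $\Rightarrow$ (iii), I would chain the implication (5) $\Rightarrow$ (6) of Proposition \ref{MainProposition} (which yields $\rho_1^*\mathrm{eu}_b = \rho_2^*\mathrm{eu}_b$) with Part (i) of Theorem \ref{GhysExplicit} (which upgrades equality of bounded Euler classes to semi-conjugacy).

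For (iii) $\Rightarrow$ (i) the plan is to split into two cases using the dichotomy of Theorem \ref{GhysExplicit}(ii), namely that semi-conjugate actions either both have or both lack a global fixed point. In the fixed-point-free case, any left-semi-conjugacy $\phi$ from $\rho_1$ to $\rho_2$ (which exists by (iii)) is automatically non-constant (else $\rho_1$ would fix the image point), so Proposition \ref{MainProposition} applies and its chain (1) $\Rightarrow$ (2) $\Rightarrow$ (3) $\Rightarrow$ (4) yields lifts $\widetilde{\rho_j}(\gamma)$ and a good lift $\widetilde{\phi}$ witnessing Property (4).

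The remaining case, where both $\rho_1$ and $\rho_2$ admit a global fixed point, is where I expect the main obstacle: the natural candidate semi-conjugacy is constant, so Proposition \ref{MainProposition} cannot be invoked. I would instead construct the data by hand. By Corollary \ref{zeroprop}, each $\rho_j$ lifts to a homomorphism $\widetilde{\rho_j}: \Gamma \to \widetilde{H}$ with a fixed point $\widetilde{x_j} \in \R$. Take $\phi$ to be the constant map with value $[\widetilde{x_1}]$, which is a left-semi-conjugacy since $[\widetilde{x_1}]$ is $\rho_1(\Gamma)$-fixed, and define $\widetilde{\phi}: \R \to \R$ by $\widetilde{\phi}(x) := \widetilde{x_1} + n$ for $x \in [\widetilde{x_2}+n,\, \widetilde{x_2}+n+1)$. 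This $\widetilde{\phi}$ is visibly non-decreasing and satisfies $\widetilde{\phi}(x+1) = \widetilde{\phi}(x)+1$, hence is a good lift of $\phi$. Because $\widetilde{\rho_2}(\gamma)$ fixes $\widetilde{x_2}$ and commutes with integral translations, it fixes every $\widetilde{x_2}+n$ and therefore preserves each interval $[\widetilde{x_2}+n,\, \widetilde{x_2}+n+1)$; combining this with the analogous property of $\widetilde{\rho_1}(\gamma)$ at $\widetilde{x_1}$, one gets $\widetilde{\rho_1}(\gamma)\widetilde{\phi}(x) = \widetilde{x_1}+n = \widetilde{\phi}(\widetilde{\rho_2}(\gamma)(x))$ for $x$ in this interval, which is exactly Property (4) and completes the proof.
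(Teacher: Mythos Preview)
Your proof is correct and follows essentially the same route as the paper. The only organizational difference is that the paper fixes an arbitrary left-semi-conjugacy $\phi$ first and splits on whether $\phi$ is constant or not, whereas you split on whether the actions have a global fixed point; these two case distinctions are equivalent since a constant $\phi$ forces a $\rho_1$-fixed point and, conversely, in the fixed-point-free case every $\phi$ is non-constant. Your treatment of the fixed-point case is actually more explicit than the paper's: where the paper simply asserts that the step-function good lift ``clearly satisfies Property (4)'', you invoke Corollary~\ref{zeroprop} to obtain genuine homomorphic lifts $\widetilde{\rho_j}$ with real fixed points $\widetilde{x_j}$ and then verify the intertwining identity directly, which fills in what the paper leaves to the reader.
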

\begin{proof}
The implications (i)$\Rightarrow$(ii)$\Rightarrow$(iii) of the corollary follow from the implications (4)$\Rightarrow$(5)$\Rightarrow$(6) in Proposition \ref{MainProposition} and Part (i) of Theorem \ref{GhysExplicit}. Conversely assume that (iii) holds and that $\rho_1$ is left-semi-conjugate to $\rho_2$ via $\phi$. If $\phi$ is non-constant then (i) and (ii) hold by the implications (2)$\Rightarrow$(4)$\Rightarrow$(5) of Proposition \ref{MainProposition}. 
Now assume, on the other hand, that $\phi$ is constant. Then the image of $\phi$ is a fixed point $[x_1]$ for $\rho_1$. According to Part (ii) of Theorem \ref{GhysExplicit} there is also a fixed point $[x_2]$ of $\rho_2$. Let $\widetilde{x_1},  \widetilde{x_2} \in \R$ be lifts of $x_1$ and $x_2$ respectively. Then 
there exists a unique good lift $\widetilde{\phi}$ of $\phi$ such that $\widetilde{\phi}([\widetilde{x_2}, \widetilde{x_2}+1)) = \{\widetilde{x_1}\}$, 
and this lift clearly satisfies Property (4) of Proposition \ref{MainProposition}. This shows that (iii) implies (i) and finishes the proof.
\end{proof}
We have thus established the equivalence of the conditions (i)-(v) in Theorem \ref{ThmEquivalence}. Together with Lemma \ref{uppersemi} this finishes the proof of Theorem \ref{ThmEquivalence}.
 %
%Let us also show equivalence of Condition (vi) stated in the remarks after Theorem \ref{ThmEquivalence} which we repeat here for convenience:

%\emph{(vi) There exist an isomorphism $\psi: \Gamma(\rho_1) \to \Gamma (\rho_2)$ and a good lift $\widetilde{\phi}$ of a non-decreasing degree one map such that for all $\gamma \in \Gamma(\rho_1)$ and $x \in \R$,}
%\[
%\widetilde{\rho_1}(\gamma)\widetilde{\phi}(x) = \widetilde{\phi}(\widetilde{\rho_2}(\psi(\gamma))(x)).
%\]

%We first show that (i)-(v) imply (vi). Since $\rho_1$ and $\rho_2$ have the same bounded Euler class, they have in particular the same Euler class. This yields an isomorphism $\psi: \Gamma(\rho_1) \to \Gamma (\rho_2)$ of central extensions. 
%\todo{How many details do we want to give?}

\subsection{Semi-conjugacy and monotone equivalence} Let us say that a circle action $\rho_1\colon \G\to H$ is \emph{left-equivalent} to another circle action $\rho_2\colon \G\to H$ if 
 $\rho_1$ is left-semi-conjugate to $\rho_2$ via a  continuous non-decreasing degree one map
$\varphi\colon \G\to H$ of Hopf-Brouwer degree 1, and recall from the introduction that  \emph{monotone equivalence} is defined as the equivalence relation generated by left-equivalence. 
This subsection is devoted to the proof of Theorem~\ref{Cal}, which states that monotone equivalence is equivalent to semi-conjugacy in the sense of the present note. One direction is immediate from what we have proved so far:
\begin{prop}\label{easyimplication}
 Suppose that $\rho_1,\rho_2\colon \G\to H$ are monotone equivalent circle actions. Then $\rho_1$ and $\rho_2$ are semi-conjugate.
\end{prop}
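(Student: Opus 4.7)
The plan is to reduce the statement to a single step of left-equivalence and then invoke machinery we have already built. Since semi-conjugacy is an equivalence relation by Proposition \ref{semiconj:rem}(i), and monotone equivalence is by definition the equivalence relation generated by left-equivalence, it suffices to show that every left-equivalence is a semi-conjugacy. So assume that $\rho_1$ is left-semi-conjugate to $\rho_2$ via a continuous non-decreasing degree one map $\varphi\colon S^1 \to S^1$ of Hopf--Brouwer degree $1$.

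The key observation is that such a $\varphi$ is necessarily non-constant: as noted in the introduction, the Brouwer--Hopf degree of a continuous non-decreasing degree one map takes the value $0$ exactly for constant maps, so a map of degree $1$ cannot be constant. Condition (2) of Proposition \ref{MainProposition} is therefore satisfied by $\varphi$, and the chain of implications (2)$\Rightarrow$(3)$\Rightarrow$(4)$\Rightarrow$(5)$\Rightarrow$(6) in that proposition yields
\[
\rho_1^*\mathrm{eu}_b \,=\, \rho_2^*\mathrm{eu}_b \,\in\, H^2_b(\Gamma;\Z).
\]

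Finally, Theorem \ref{GhysExplicit}(i) converts equality of bounded Euler classes into semi-conjugacy of the actions, which is the desired conclusion. There is no real obstacle here: the delicate work was already done in Proposition \ref{MainProposition} and in the proof of Theorem \ref{GhysExplicit}(i); the present statement is the easy direction of Theorem \ref{Cal}, essentially expressing the fact that our notion of semi-conjugacy already encompasses continuous degree one left-semi-conjugacies and is closed under the equivalence relation they generate. The converse (that semi-conjugate actions are monotone equivalent) is the content of the next subsection and will require genuinely new input.
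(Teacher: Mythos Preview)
Your proof is correct and follows essentially the same route as the paper: reduce to a single left-equivalence, observe that a continuous degree~$1$ map is non-constant so condition~(2) of Proposition~\ref{MainProposition} holds, deduce equality of bounded Euler classes via (2)$\Rightarrow$(6), and conclude semi-conjugacy by Theorem~\ref{GhysExplicit}(i). The paper's proof is a terser version of exactly this argument (citing Theorem~\ref{ThmEquivalence} in place of Theorem~\ref{GhysExplicit}(i)).
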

\begin{proof}
We may reduce to the case when $\rho_1$ is left-equivalent to $\rho_2$. In this case, $\rho_1$ is left-semi-conjugate to $\rho_2$ via a non-constant map, so
 the implication (2)$\Rightarrow$(6) of Propostion~\ref{MainProposition} and Theorem~\ref{ThmEquivalence} imply that $\rho_1$ is semi-conjugate to $\rho_2$. 
\end{proof}
Concerning the converse implication we recall the following classical trichotomy for circle actions (see e.g. \cite{Ghys2} for a detailed discussion and proof).
\begin{lemma} Let $\rho\colon\G\to H$ be a circle action. Then exactly one of the following three possibilities occurs:
\begin{enumerate}
 \item $\rho(\G)$ has a finite orbit.
 \item $\rho$ is minimal, i.e. every $\rho(\G)$-orbit is dense.
 \item there exists a unique $\rho(\G)$-invariant infinite compact proper subset $K\subsetneq S^1$ (called the \emph{exceptional minimal set of $\rho(\G)$}) such that $K$ is contained in the closure of any orbit of $\rho(\G)$.
\end{enumerate}
In case (3),  $K$ is homeomorphic to a Cantor set.\qed
\end{lemma}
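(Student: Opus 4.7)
The plan is to first produce, via Zorn's lemma, a minimal non-empty closed $\rho(\G)$-invariant subset $K\subseteq S^1$; the required finite intersection property is guaranteed by compactness of $S^1$. I would then distinguish three cases according to $K$. If $K$ is finite, then for any $x\in K$ the orbit $\rho(\G)x$ is a closed non-empty $\rho(\G)$-invariant subset of $K$ and hence equals $K$ by minimality, yielding case~(1). If $K=S^1$, then every non-empty closed invariant subset of $S^1$ is all of $S^1$; in particular every orbit closure is $S^1$, giving case~(2). Otherwise $K$ is infinite and a proper subset of $S^1$, which is case~(3).

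In case~(3), I would first show $K$ is a Cantor set via two short minimality arguments. The set of non-isolated points of $K$ is closed and $\rho(\G)$-invariant; if it were empty, $K$ would be compact discrete and hence finite, a contradiction, so by minimality it equals $K$, i.e.\ $K$ is perfect. Next, $U:=\mathrm{int}_{S^1}(K)$ is open and invariant; if $U\neq\emptyset$, then $\overline{U}=K$ by minimality and the closed invariant set $\partial U=K\setminus U$ is either empty (in which case $K=U$ is clopen in $S^1$, so $K=S^1$ by connectedness) or all of $K$ by minimality (in which case $U=\emptyset$), both contradictions. Hence $K$ has empty interior in $S^1$, which in $S^1$ is equivalent to total disconnectedness since any non-degenerate arc inside $K$ would lie in the interior. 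A non-empty compact perfect totally disconnected metrisable space is a Cantor set by Brouwer's classical theorem.

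The main obstacle is the uniqueness of $K$ together with the covering property $K\subseteq\overline{\rho(\G)x}$ for every $x\in S^1$. Suppose toward a contradiction that $K_1\neq K_2$ are two distinct minimal closed invariant sets: by minimality $K_1\cap K_2=\emptyset$, and by the previous step both are Cantor sets. The complement $S^1\setminus K_1$ is a countable disjoint union of open ``gaps'' of total length at most $1$, so for each $\varepsilon>0$ only finitely many gaps have length $\geq\varepsilon$. Picking $z\in K_2$ inside some gap $J$ of $K_1$, minimality of $K_2$ yields $K_2\subseteq\overline{\rho(\G)J}$, and since the frontier of $\rho(\G)J$ lies in $K_1$ and is disjoint from $K_2$, in fact $K_2\subseteq\bigcup_{\gamma\in\G}\rho(\gamma)J$. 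By compactness of $K_2$ there exist $\gamma_1,\ldots,\gamma_m\in\G$ with $K_2\subseteq\rho(\gamma_1)J\sqcup\cdots\sqcup\rho(\gamma_m)J$, and invariance of $K_2$ then gives $K_2\subseteq\rho(\gamma\gamma_1)J\sqcup\cdots\sqcup\rho(\gamma\gamma_m)J$ for \emph{every} $\gamma\in\G$. In case~(3) no orbit of gaps can be finite (otherwise the endpoints would form a finite $\rho(\G)$-orbit), so the stabiliser of $J$ in $\G$ has infinite index; by B.~H.~Neumann's lemma (finitely many cosets of subgroups of infinite index cannot cover a group), there exists $\gamma\in\G$ for which each translate $\rho(\gamma\gamma_i)J$ has length $<\varepsilon$. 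Picking any $m+1$ distinct points of the infinite set $K_2$, their positive minimal pairwise distance $\alpha$ combined with the pigeonhole principle produces the desired contradiction as soon as $\varepsilon<\alpha$. With uniqueness established, for any $x\in S^1$ the orbit closure $\overline{\rho(\G)x}$ contains a minimal invariant subset which must be $K$, so $K\subseteq\overline{\rho(\G)x}$, and mutual exclusivity of the three cases is then immediate since a finite orbit $F$ would force $K\subseteq F$, contradicting infinitude of $K$.
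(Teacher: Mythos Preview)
The paper does not actually supply a proof of this trichotomy; the lemma is stated with a \qed and a reference to Ghys' survey, so there is no in-paper argument to compare against. Your outline is essentially the classical one and is largely sound, but there is one genuine gap and one place where you work harder than necessary.

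The gap is in the uniqueness step. You assert that two distinct minimal closed invariant sets $K_1,K_2$ are ``by the previous step both Cantor sets'', but that previous step only applies to a minimal set which is already known to be \emph{infinite} and proper. You have this for $K_1=K$, but nothing so far rules out that $K_2$ is finite, i.e.\ a finite orbit sitting in the gaps of $K_1$; in that situation your final pigeonhole argument (choose $m+1$ distinct points of $K_2$) breaks down. The fix is short and uses the same idea you invoke one line later: if $K_2$ were finite, the finitely many gaps of $K_1$ meeting $K_2$ would form a finite $\rho(\G)$-invariant family, and their endpoints would give a non-empty finite $\rho(\G)$-invariant subset of $K_1$, hence a finite orbit inside $K_1$, contradicting that $K_1$ is minimal and infinite. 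With this in hand, every minimal set other than $K$ is infinite, proper, hence Cantor, and your Neumann's-lemma argument goes through as written.

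That said, the usual route avoids Neumann's lemma entirely by proving the covering property $K\subseteq\overline{\rho(\G)x}$ \emph{first}, after which uniqueness is a one-liner (if $K'$ is another minimal set and $x\in K'$, then $K\subseteq\overline{\rho(\G)x}\subseteq K'$, so $K=K'$ by minimality of $K'$). The covering property itself follows from the same gap bookkeeping you already set up: for $x$ in a gap $J=(a,b)$ with $a\in K$, the orbit of $a$ is dense in the perfect set $K$, so any neighbourhood of a target $y\in K$ contains infinitely many distinct points $\rho(\gamma)a$, hence infinitely many distinct gaps $\rho(\gamma)J$; since the total gap length is finite, all but finitely many of these have length $<\varepsilon$, so some $\rho(\gamma)x$ lies within $2\varepsilon$ of $y$. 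This also makes the ``no finite gap orbit'' issue disappear, since an infinite orbit of the endpoint $a$ automatically forces an infinite orbit of the gap $J$. Your detour through Neumann's lemma is correct but unnecessary.
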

From this we deduce:
\begin{prop}\label{nofinite}
Let $\rho\colon \G\to H$ be a circle action without finite orbits. Then $\rho$ is monotone equivalent to a minimal action. 
\end{prop}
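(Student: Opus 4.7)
The plan is to apply the trichotomy lemma to $\rho$: since $\rho$ has no finite orbits, either (2) $\rho$ is already minimal (in which case there is nothing to prove, as $\rho$ is left-equivalent to itself via the identity), or (3) there is a unique exceptional minimal set $K \subsetneq S^1$ which is a Cantor subset contained in the closure of every $\rho(\Gamma)$-orbit. I would focus on case (3) and construct an explicit minimal action $\rho'$ to which $\rho$ is left-equivalent via a continuous degree one map.

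The construction is to collapse each component of the complement of $K$. Write $S^1 \setminus K = \bigsqcup_{n \in \mathbb{N}} I_n$ as a disjoint union of open arcs with closures $\overline{I_n} = [a_n, b_n]$, with $a_n, b_n \in K$. As a first step I would verify that the closures $\overline{I_n}$ are pairwise disjoint: if two complementary arcs shared an endpoint $p \in K$, then $p$ would be isolated in $K$, contradicting the fact that a Cantor set has no isolated points. This allows one to define the equivalence relation on $S^1$ that identifies all points inside a single $\overline{I_n}$ and leaves every other point alone, and to check by a standard argument that the quotient $S^1/\sim$ is homeomorphic to $S^1$ with a quotient map $\varphi: S^1 \to S^1$ that is continuous, surjective, non-decreasing and of Hopf--Brouwer degree $1$.

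Next, because $K$ is the unique invariant compact infinite proper subset, $\rho(\Gamma)$ preserves $K$ and hence permutes the collection $\{\overline{I_n}\}$. Therefore the relation $\sim$ is $\rho(\Gamma)$-invariant and $\rho$ descends to an action $\rho' : \Gamma \to H$ on $S^1/\sim \cong S^1$ satisfying $\rho'(\gamma) \circ \varphi = \varphi \circ \rho(\gamma)$ for every $\gamma \in \Gamma$, so $\varphi$ is a continuous degree-one left-semi-conjugacy from $\rho'$ to $\rho$. To check that $\rho'$ is minimal, I would use that $\varphi(K) = S^1/\sim$ (since the equivalence class of every point in the quotient is represented by at least one point of $K$, e.g.\ an endpoint $a_n$ or a $K$-point not in any $\overline{I_n}$). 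Then for any $x \in S^1$, continuity of $\varphi$ gives
\[
\overline{\rho'(\Gamma)\,\varphi(x)} = \overline{\varphi(\rho(\Gamma)x)} \supseteq \varphi\bigl(\overline{\rho(\Gamma)x}\bigr) \supseteq \varphi(K) = S^1/\sim,
\]
so every $\rho'$-orbit is dense. Finally, since $\varphi$ exhibits $\rho'$ as left-equivalent to $\rho$, the two actions are monotone equivalent, which completes the proof.

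The main obstacle I expect is the topological verification that collapsing the closed complementary arcs yields a space homeomorphic to $S^1$ together with a continuous, non-decreasing, degree one quotient map $\varphi$. This amounts to a standard but slightly delicate argument: one can exhibit $\varphi$ as the projection to the quotient and use that $\{\overline{I_n}\}$ is a null sequence of pairwise disjoint closed arcs whose total diameters sum appropriately, or alternatively construct a good lift $\widetilde\varphi : \mathbb{R} \to \mathbb{R}$ directly from a non-atomic fully supported probability measure on $K$ (reducing the circle picture to a non-decreasing continuous lift commuting with integer translations). Once $\varphi$ is in hand, the rest of the argument (descent of the action, minimality of $\rho'$, left-equivalence) is straightforward from the properties of the exceptional minimal set.
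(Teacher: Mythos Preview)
Your proposal is correct and follows essentially the same approach as the paper: apply the trichotomy, in case (3) collapse the closures of the complementary arcs of the exceptional minimal set $K$ to obtain a quotient circle and a continuous non-decreasing degree-one map $\varphi$, observe that the action descends, and use that $K$ lies in the closure of every orbit to deduce minimality of the quotient action. Your extra verification that the closures $\overline{I_n}$ are pairwise disjoint (via the absence of isolated points in $K$) is a nice piece of care that the paper leaves implicit.
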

\begin{proof} Since (1) is excluded by the assumption and the conclusion holds trivially in case (2), we may assume that $\rho$ satisfies (3) of the above trichotomy. Thus
let $K\subsetneq S^1$ be the minimal exceptional set of $\rho(\G)$. We have $S^1\setminus K=\bigcup_{i\in \mathbb{N}} U_i$,
 where the $U_i$'s are pairwise disjoint non-empty open subsets of $S^1$ homeomorphic to open intervals. Define an equivalence relation $\sim$ on $S^1$ by declaring $x\sim y$ if and only if there exists $i\in\mathbb{N}$ such that 
 $
 \{x,y\}\subset \overline{U_i},
$ 
 and let $\varphi: S^1 \to X := S^1/\sim$ denote the quotient map. Since $X$ is obtained from $S^1$ by collapsing intervals, it is homeomorphic to $S^1$. Moreover, the map $\varphi$ is a continuous, non-decreasing degree one map of Hopf-Brouwer degree $1$  (any of its lifts to $\R$ is just a devil's staircase). 
 
Now let $\gamma\in\G$. Since $K$ is $\rho(\gamma)$-invariant, the element $\rho(\gamma)$ permutes the intervals $U_i$ and thus descends to an orientation-preserving homeomorphism of $X$. We thus obtain a homomorphism $\rho'\colon \G \to {\rm Homeo}^+(X)$ such that for all $\gamma \in \G$, 
 \[\rho'(\gamma)\varphi=\varphi\rho(\gamma),\]
 and it remains to show only that $X$ is minimal under $\rho'(\Gamma)$. However, this follows from the observation that since $K$ is contained in the closure of any $\rho(\G)$-orbit, 
 the set $S^1=\varphi(K)$ is contained in the closure of any $\rho'(\G)$-orbit.
\end{proof}
The proof of Theorem~\ref{Cal} in the case where every orbit of $\rho_1(\G)$ and $\rho_2(\G)$ is infinite is now immediate.
\begin{proof}[Proof of Theorem~\ref{Cal} if every orbit of $\rho_1(\G)$ and $\rho_2(\G)$ is infinite] 

Let us  assume that $\rho_1$ and $\rho_2$ are semi-conjugate and that every orbit of $\rho_1(\G)$ and $\rho_2(\G)$ is infinite.
By Proposition~\ref{nofinite}, the actions $\rho_i$ are monotone equivalent to minimal actions $\rho_i'$ for $i=1,2$. We have already proved in Proposition~\ref{easyimplication}
that monotone equivalence implies semi-conjugacy, so $\rho'_1$ is semi-conjugate to $\rho'_2$. On the other hand, we know from Proposition~\ref{dense:orbits} that semi-conjugate minimal actions
are conjugate, whence in particular monotone equivalent. Since monotone equivalence is an equivalence relation, this implies that $\rho_1$ and $\rho_2$ are monotone equivalent.
\end{proof}
It remains to deal with the case where $\rho_1$ or $\rho_2$ has a finite orbit. This is slightly more technical.
\begin{proof}[Proof of Theorem~\ref{Cal} in the presence of a finite orbit] Here we assume that $\rho_1$ and $\rho_2$ are semi-conjugate via a pair of non-decreasing degree one maps $\varphi,\varphi'\colon S^1\to S^1$ satisfying
\[
\rho_1(\gamma)\varphi=\varphi\rho_2(\gamma)\quad \text{and} \quad \rho_2(\gamma)\varphi'=\varphi'\rho_1(\gamma)
\]
for every $\gamma\in\G$, and that one of them, say $\rho_1$, has a finite orbit $\{x_1, \dots, x_k\}$. We may assume that $(x_1,\ldots,x_k)$ is positively oriented. Note that, since $\rho_1(\G)$ acts transitively on the $x_i$'s, it also acts transitively
on the connected components of $S^1\setminus \{x_1,\ldots,x_k\}$. As a consequence, every orbit of $\rho_1(\G)$ must contain at least $k$ points. In particular, if we set $y_i=\varphi(x_i)$ for every $i=1,\ldots,k$, then $\{y_1,\ldots,y_k\}$ is a finite $\rho_2(\G)$-orbit, and $\varphi'(\{y_1,\ldots,y_k\})$ is a finite $\rho_1(\G)$-orbit, hence has to have at least $k$ points. This implies
that the $y_i$'s are pairwise distinct and that $(y_1,\ldots,y_k)$ is a positively-oriented $\rho_2(\G)$-invariant $k$-tuple. 

For every $\gamma \in \Gamma$ the homeomorphism $\rho_1(\gamma)$ induces a cyclic permutation of $(x_1, \dots, x_k)$, hence there exists $j(\gamma) \in \Z/k\Z$ such that 
\[
\rho_1(\gamma)x_i=x_{i+j(\gamma)},
\]
where addition of indices is always understood in $\Z/k\Z$. We can now compute the rotation number of $\rho_1(\gamma)$ using the orbit $\{x_1, \dots, x_k\}$; we then obtain
\[
R(\rho_1(\gamma)) = [j(\gamma)/k] \in \R/\Z.
\]
Note that the cyclic permutation induced by $\rho_1(\gamma)$ on $(x_1,\ldots,x_k)$ is completely determined by $R(\rho_1(\gamma))$. However, since the restrictions of $\rho_1$ and $\rho_2$ to the cyclic subgroup generated by $\gamma$ are semi-conjugate, it follows from Corollary~\ref{Poincare} that the rotation numbers of $\rho_1(\gamma)$ and $\rho_2(\gamma)$ coincide. We deduce that $\rho_1(\gamma)$ induces the same cyclic permutation on $(x_1, \dots, x_k)$ as $\rho_2(\gamma)$ on $(y_1, \dots, y_k)$. This information is enough to construct a circle action $\rho_3$ ``containing'' both $\rho_1$ and $\rho_2$ as follows.

Let us first assume that $k \geq 2$. Given two distinct points points $a, b \in S^1$ we define the open interval $(a,b)$ as
\[
(a,b):= \{z \in S^1\mid (a,z,b)\text{ positively oriented}\}.
\]
For every $i=1,\ldots,k$ we define $U_i := (x_i, x_{i+1})$ and $V_i:= (y_i, y_{i+1})$ and denote by $\overline{U_i}$ and $\overline{V_i}$ the closures of $U_i$ and $V_i$ in $S^1$ respectively. By the assumption $k \geq 2$ these are homeomorphic to closed intervals. We then define $X$ as the quotient space obtained from the disjoint union of the $\overline{U_i}$ and the $\overline{V_i}$ obtained by identifying the right endpoint $x_{k+1} \in \overline{U_{k}}$ with the left endpoint $y_{k} \in \overline{V_{k}}$ and the right endpoint $y_{k+1}\in \overline{V_{k}}$ with the left endpoint $x_{k+1} \in \overline{U_{k+1}}$. In the case $k=1$ we instead define $X$ by cutting $S^1$ at the respective fixed points $x_1$ and $y_1$ and glueing the resulting two open intervals $U_1$ and $V_1$ along a $0$-sphere. Either way we obtain a circle $X$ which contains $U_1, V_1, \dots, U_k, V_k$ in this exact cyclic order, and such that the complement of these open sets is a finite set of points. We now define $\rho_3: \Gamma \to {\rm Homeo}^+(X)$ as follows: Given $\gamma \in \G$ we define
\[
\rho_3(\gamma): \bigcup_{i=1}^k U_i \cup V_i \to \bigcup_{i=1}^k U_i \cup V_i, \quad \rho_3(\gamma)(x) = \left\{\begin{array}{rl}\rho_1(\gamma)(x), & x \in \bigcup U_i,\\ \rho_2(\gamma)(x), & x \in \bigcup V_i.\end{array}\right.
\]
Since $\rho_1(\gamma)$ induces the same permutation on the $x_i$ as $\rho_2(\gamma)$ on the $y_i$, it follows that $\rho_3(\gamma)$ extends uniquely to an orientation-preserving homeomorphism of $X \cong S^1$.

It remains to show only that $\rho_1$ and $\rho_2$ are left-equivalent to $\rho_3$, but this is obvious: Concerning $\rho_1$ we define a continuous non-decreasing map $\varphi: X \to S^1$ by contracting each of the intervals $\overline{V_i}$ to a point. Then $\varphi$ has Hopf-Brouwer degree $1$ and, by construction, $
\rho_1(\gamma)\varphi=\varphi\rho_3(\gamma)
$ holds for all $\gamma \in \Gamma$. Similarly, the left-equivalence from $\rho_2$ to $\rho_3$ is obtained by collapsing the $\overline{U_i}$. 
\end{proof}

\newpage
\appendix
\section{The action of the double cover of $H$ on the circle}\label{Section double cover}
Consider the circle $S^1$ and its double cover $X$ which, somewhat confusingly, is again homeomorphic to a circle. 
We denote by $\overline{H}$ the group of those homeomorphisms of $X$ which map antipodal points to antipodal points. The action of $\overline{H}$ on $X$ then factors through an action of $S^1$ and thus gives rise to a surjective homomorphism \[p:\overline{H}\rightarrow H := \omeo,\] 
which exhibits $\overline{H}$ as the unique double cover of $H$. Since $X \cong S^1$, the group $\overline{H}$ can also be seen as a subgroup of ${\rm Homeo}(X) \cong H$, but this is not the point of view we are going to take.

From now on we will denote the double covering of the circle simply by $S^1$, with the understanding that the action of $\overline{H}$ on $S^1$ is the one described above. This action is actually important in many applications, since it contains the action of ${\rm SL}_2(\R)$ on the circle obtained by letting ${\rm SL}_2(\R)$ act on $\R^2 \setminus\{0\}$ via the standard action and identifying $S^1$ with $(\R^2\setminus\{0\})/\R_{>0}$. 
This action in turn is a particular instance of the action of ${\rm SL}_n(\R)$ on  $S^{n-1} \cong (\R^n\setminus\{0\})/\R_{>0}$ for $n \geq 2$, and these generalizations play an important role concerning higher Euler classes.

The aim of this appendix is twofold: On the one hand, we describe all  homogeneous cocycles obtained as $\overline{H}$-invariant functions $(S^1)^3\rightarrow \mathbb{Z}$ and relate them to the cohomology class $p^*(\eu_b)\in H^2_b(\overline{H},\mathbb{Z})$. On the other hand, we establish a fixed point theorem (Theorem \ref{thm: appendix}) which is stronger than its analogue for $H$ (Corollary \ref{zeroprop}) 
since in this case a fixed point is not only equivalent to the vanishing of the pullback of the bounded Euler class, but further to the vanishing of the pullback of a particular cocycle. 

\subsection*{Non-degenerate homogeneous cochains}
For every point $x\in S^1$, we denote by $\overline{x}$ its antipodal point. We say that an $H$-orbit in $(S^1)^k$ is \emph{degenerate} if it contains a point of the form $(\dots, x, \dots, x, \dots)$ or of the form $(\dots, x, \dots, \bar x, \dots)$. Given $n \in \mathbb N$ let us denote by $(S^1)^{[n]} \subset (S^1)^n$ the union of all non-degenerate $\overline{H}$-orbits in $(S^1)^n$.
 We refer to an $\overline{H}$-invariant function $f: (S^1)^{[n+1]} \to \Z$ as a \emph{non-degenerate homogeneous $n$-cochain}. Note that if $(x_0, \dots, x_n) \in (S^1)^{[n+1]}$, then $(x_0, \dots, \widehat{x_i}, \dots, x_n) \in (S^1)^{[n]}$ for all $i = 0, \dots, n$, and hence the homogeneous differential defines a map
\[
\delta: {\rm Map}((S^1)^{[n]}, \Z)^{\overline{H}} \to {\rm Map}((S^1)^{[n+1]}, \Z)^{\overline{H}}
\]
for every $n$. We refer to elements in the kernel respectively image of this map as \emph{non-degenerate homogeneous cocycles}, respectively \emph{non-degenerate homogeneous coboundaries}. 

Every cochain $f \in {\rm Map}((S^1)^{n+1}, \Z)^{\overline{H}}$ restricts to a non-degenerate homogeneous cochain on $(S^1)^{[n+1]}$ and this restriction defines a chain map
\[
{\rm res}: ({\rm Map}((S^1)^{n+1}, \Z)^{\overline{H}}, \delta) \to ({\rm Map}((S^1)^{[n+1]}, \Z)^{\overline{H}}, \delta), \quad f \mapsto f|_{S^{[n+1]}}.
\]
\begin{lemma} The map ${\rm res}$ induces an isomorphism on the level of cohomology. In particular,
\begin{equation}\label{BuMoIso}
H_b^\bullet(\overline{H}\curvearrowright S^1) = H^\bullet(\overline{H}\curvearrowright S^1) \cong H^\bullet({\rm Map}((S^1)^{[n+1]}, \Z)^{\overline{H}}, \delta).
\end{equation}
\end{lemma}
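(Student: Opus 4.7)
The plan is to prove the two assertions of the lemma in turn.

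For the first identity $H_b^\bullet(\overline H \curvearrowright S^1; \mathbb Z) = H^\bullet(\overline H \curvearrowright S^1; \mathbb Z)$, I would show that $\overline H$ acts on $(S^1)^{n+1}$ with only finitely many orbits for each $n$, which forces every $\overline H$-invariant $\mathbb Z$-valued cochain to be bounded. The quotient projection to the base circle, together with $p : \overline H \to H$, intertwines the $\overline H$-action on $(S^1)^{n+1}$ with the $H$-action on the $(n+1)$-fold product of the base. Combining the finiteness of $H$-orbits downstairs (Section~\ref{SecBEC}) with the fact that the kernel of $p$ acts freely on $(S^1)^{n+1}$ via the diagonal antipodal involution, a short orbit-counting argument shows that each $H$-orbit downstairs lifts to exactly $2^n$ orbits of $\overline H$ upstairs, so finiteness transfers.

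For the quasi-isomorphism induced by $\mathrm{res}$, I would set up the short exact sequence of cochain complexes
$$0 \to D^\bullet \to \mathrm{Map}((S^1)^{\bullet+1}, \mathbb Z)^{\overline H} \xrightarrow{\mathrm{res}} \mathrm{Map}((S^1)^{[\bullet+1]}, \mathbb Z)^{\overline H} \to 0,$$
where $D^n$ is the kernel of $\mathrm{res}$, i.e. the $\overline H$-invariant cochains supported on degenerate tuples. Surjectivity of $\mathrm{res}$ is clear via extension by zero, which preserves $\overline H$-invariance since the non-degenerate locus is $\overline H$-invariant. That $D^\bullet$ is closed under $\delta$ follows from the observation that every face of a non-degenerate tuple is non-degenerate, so $\delta f$ vanishes on non-degenerate tuples whenever $f \in D^n$. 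Passing to the long exact sequence in cohomology, the lemma reduces to the vanishing $H^\bullet(D^\bullet) = 0$.

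This acyclicity is the main obstacle. I would prove it by an equivariant normalization-type argument. The strategy is to filter $D^\bullet$ by the smallest index $j^* \geq 1$ at which a coincidence first occurs, namely the smallest $j$ such that $x_j \in \{x_i : i < j\} \cup \{\bar x_i : i < j\}$, and to construct an $\overline H$-equivariant chain contraction on each associated graded piece by collapsing the tuple at this first coincidence. Equivariance is automatic since $\overline H$ commutes with the antipodal involution, so the filtration is $\overline H$-invariant. The verification that $\delta h + h \delta = \mathrm{id}$ modulo lower filtration follows the same pattern as the proof of the Normalization Theorem for the bar complex, with additional bookkeeping to treat the antipodal coincidence alongside the equality coincidence; the vanishing $H^\bullet(D^\bullet) = 0$ then follows by induction on the filtration degree.
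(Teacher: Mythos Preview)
Your treatment of the first equality is fine: finiteness of $\overline H$-orbits on $(S^1)^{n+1}$ follows from finiteness of $H$-orbits downstairs together with the fact that the fibre of the covering $(S^1)^{n+1}\to(S^1)^{n+1}$ has $2^{n+1}$ points, so each $H$-orbit below is covered by at most $2^{n+1}$ $\overline H$-orbits above. (Your count of ``exactly $2^n$'' is correct on nondegenerate orbits but can drop on degenerate ones; this does not affect the conclusion.)

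Your short exact sequence is also set up correctly: $\mathrm{res}$ is degreewise surjective by extension by zero, and $D^\bullet$ is a subcomplex because faces of nondegenerate tuples are nondegenerate. The reduction to $H^\bullet(D^\bullet)=0$ is thus valid and is logically equivalent to what the paper has to show.

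The genuine gap is in your proof of acyclicity of $D^\bullet$. Two problems:
\begin{itemize}
\item The filtration of $D^\bullet$ by the index $j^*$ of the first coincidence is \emph{not} a filtration by subcomplexes. Concretely, take $f\in D^n$ supported on $\{j^*\le p\}$ and a degenerate $(n{+}2)$-tuple $(x_0,\dots,x_{n+1})$ with $j^*=p+1$, say $x_{p+1}\in\{x_k,\bar x_k\}$ for some $k\le p$. Deleting $x_i$ with $i\le p$, $i\neq k$, yields a face whose first coincidence is now at position $p$, so $f$ need not vanish there and $\delta f$ need not be supported on $\{j^*\le p\}$.
\item The appeal to the Normalization Theorem is not justified: that argument uses the simplicial degeneracy operators and concerns \emph{adjacent} repeats $x_i=x_{i+1}$, whereas here ``degenerate'' means an arbitrary coincidence $x_i=x_j$ or $x_i=\bar x_j$. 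There is no simplicial structure to invoke, and ``collapsing at the first coincidence'' does not give a well-defined $\overline H$-equivariant homotopy on cochains without further work.
\end{itemize}

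The paper avoids this difficulty by taking a different route: rather than proving $D^\bullet$ acyclic, it constructs an explicit cohomological inverse $\mathrm{ext}$ to $\mathrm{res}$, following \cite{BucherMonod}. Given $f$ on nondegenerate tuples, one defines $\widetilde f(x_0,\dots,x_n)$ on a degenerate tuple by perturbing $x_n,\dots,x_0$ in that order by an infinitesimal amount in the positive direction so as to make the tuple nondegenerate, and then evaluating $f$. This perturbation is $\overline H$-equivariant (since $\overline H$ preserves orientation and antipodality), and one checks directly that $\mathrm{ext}$ is a chain map inverse to $\mathrm{res}$ in cohomology. If you wish to salvage your approach, this $\mathrm{ext}$ furnishes a chain-level splitting of your short exact sequence up to homotopy, which then \emph{implies} the acyclicity of $D^\bullet$; but producing the contracting homotopy on $D^\bullet$ directly, as you propose, requires more than the Normalization Theorem provides.
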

\begin{proof} Since for every $n$ there will always be finitely many (non-degenerate) $\overline{H}$-orbits, it is immediate that $H_b^\bullet(\overline{H}\curvearrowright S^1) = H^\bullet(\overline{H}\curvearrowright S^1)$.

Following \cite{BucherMonod} we construct an extension map
\[
{\rm ext}: ({\rm Map}((S^1)^{[n+1]}, \Z)^{\overline{H}}, \delta) \to  ({\rm Map}((S^1)^{n+1}, \Z)^{\overline{H}}, \delta),\quad f\mapsto \widetilde{f}.
\]
which on the level of cohomology is an inverse to ${\rm res}$. Intuitively, in order to define $\widetilde{f}(x_0, \dots, x_n)$ for a degenerate $(n+1)$-tuple $(x_0, \dots, x_n)$ we want to move $x_n,\dots ,x_0$ (in this order) a very small amount in the positive direction to make the $(n+1)$-tuple non-degenerate, and then evaluate $f$ on the perturbed tuple. 
More precisely, if $x_n$ is equal to $x_i$ or $\overline{x_i}$ for $i\neq n$, replace $x_n$ by a point $x_n^+$ such that $(x_n,x_n^+,\overline{x_n})$ 
is positively oriented and no $x_i$ or $\overline{x_i}$, for $i\neq n$, lies in the positive direction between $x_n$ and $x_n^+$. Continue inductively for all $x_i$'s and set $\widetilde{f}(x_0,\dots,x_n):=f(x_0^+,\dots,x_n^+)$. As in \cite{BucherMonod} one then shows that ${\rm ext}$ is a chain map which is inverse to {\rm res} in cohomology.
\end{proof}
In view of the lemma we can represent every class in $H^\bullet(\overline{H}\curvearrowright S^1)$ by a non-degenerate homogeneous cocycle, and thus we will focus on non-degenerate homogeneous cocycles from now on.

\subsection*{ Non-degenerate orbits of $\overline{H}$ acting on $(S^1)^{n+1}$}
The classification of non-degenerate $\overline{H}$-orbits on $(S^1)^{n+1}$ for $n\leq 2$ is as follows.
\begin{itemize}
\item[(n=0)] The action of $\overline{H}$ on $S^1$ has exactly one non-degenerate orbit. 
\item[(n=1)] The action of $\overline{H}$ on two factors $(S^1)^2$ has two non-degenerate orbits: If $x,y\in S^1$ are chosen so that $(x,y,\overline{x})$ is a positively oriented triple, then we denote them by
\[\mathcal O^{(2)}_+ :=\overline{H}\cdot (x,y) \quad \mathrm{and} \quad  O^{(2)}_- := \overline{H}\cdot (y,x).\]
\item[(n=2)]The action of $\overline{H}$ on three factors $(S^1)^3$ has eight non-degenerate orbits. Choose distinct points $x_0,x_1,x_2\in S^1$ and suppose that $(x_0,x_1,x_2,\overline{x_0})$ is a strictly positively oriented quadruple. Then the orbits are given as follows. There are six non-degenerate orbits parametrized by the symmetric group $\mathrm{Sym}(3)$ over $\{0,1,2\}$ and given by
\[\mathcal O^{(3)}_\sigma := \overline{H}\cdot(x_{\sigma(0)},x_{\sigma(1)},x_{\sigma(2)}), \quad (\sigma \in \mathrm{Sym}(3)),\]
and there are two additional non-degenerate orbits given by
\[\mathcal O^{(3)}_+ := \overline{H}\cdot(x_0, x_2,\overline{x_1}) \quad\mathrm{and} \quad \mathcal O^{(3)}_-:=\overline{H}\cdot(x_0,\overline{x_1},x_2).\]
\end{itemize}

\subsection*{Non-degenerate homogeneous $2$-cocycles and non-degenerate homogeneous $2$-coboundaries} Denote by $p_2:S^1\rightarrow S^1$  the double cover given by identifying antipodal points. Then $p_2$ induces a map commuting with the map induced by $p:\overline{H}\rightarrow H$ 
 \[\begin{xy}
\xymatrix{
H^*(H\curvearrowright S^1; \Z)\ar[d]\ar[r]^{p_2^*}&H^*(\overline{H}\curvearrowright S^1; \Z) \ar[d] \\
H^*(H; \Z)\ar[r]^{p^*}&H^*(\overline{H}; \Z) 
}
\end{xy}\]
Specialising to degree $2$, we know that the left-hand side $H^2(H\curvearrowright S^1; \Z) $ is an infinite cyclic group generated by the class of the Euler cocycle $c$. Our goal now is to prove that the right-hand side $H^2(\overline{H}\curvearrowright S^1; \Z)$ is also infinite cyclic and to construct an explicit homogeneous cocycle
representing its generator.

%hence the right-hand side is an infinite cyclic group generated by $\pm \frac 1 2 p_2^*[c]$. Our goal is to provide an explicit cocycle representing this generator.

To this end we first observe that a non-degenerate homogeneous $2$-cochain $f$ is given by the $8$ numbers
\[
f_\sigma := f|_{\mathcal O^{(3)}_\sigma}, \quad f_+ := f|_{\mathcal O^{(3)}_+}, \quad f_- := f|_{\mathcal O^{(3)}_-},
\]
where $\sigma \in \mathrm{Sym}(3)$. 
\begin{lemma} \label{appendix} A nondegenerate homogeneous $2$-cochain  $f$ is a cocycle if and only if
\begin{eqnarray*}
f_\mathrm{Id}=f_{(0\ 1 \ 2 )}=f_{(0 \ 2 \ 1)} =: f^+,\\
f_{(0\ 1)}=f_{(0\ 2)}=f_{(1\ 2)} =: f^-,\\
f^+ + f^-=f_++f_-,
\end{eqnarray*}
and $f$ is a coboundary if and only if there exists $w_{\pm} \in \R$ such that
\[
f_\mathrm{Id} = w_+, \quad f_{(0\ 1)} = w_-, \quad f_+ = 2w_+-w_-, \quad f_- = 2w_- - w_+.
\]
Furthermore there is an isomorphims $H^2(\overline{H}\curvearrowright S^1; \Z)\cong \mathbb{Z}$ given by sending $[f]\in H^2(\overline{H}\curvearrowright S^1; \Z) $ to $f_+-2f^++f^-\in \mathbb{Z}$.
\end{lemma}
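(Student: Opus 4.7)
The proof will have three parts, parallel to that of Lemma~\ref{lemma f cocycle}. For the coboundary formula, a non-degenerate homogeneous $1$-cochain $b$ is determined by two values $w_\pm:=b|_{\mathcal{O}^{(2)}_\pm}$. Picking the eight orbit representatives of the preceding subsection (e.g.\ $x_0=0$, $x_1=1/8$, $x_2=3/8$, together with their permutations and antipodes), a short case-by-case evaluation of $\delta b(y_0,y_1,y_2)=b(y_1,y_2)-b(y_0,y_2)+b(y_0,y_1)$---classifying each pair into $\mathcal{O}^{(2)}_+$ or $\mathcal{O}^{(2)}_-$---yields exactly the claimed coboundary formulas. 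In particular every coboundary already satisfies the three cocycle identities.

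For the necessity of the cocycle identities, I would exhibit five non-degenerate $4$-tuples whose cocycle equations $\delta f=0$ produce precisely the five stated linear relations. Four of these tuples---for instance one with all four points in a single half-circle, or one with three points in a half-circle and a generic fourth---yield the chains of equalities $f_{\mathrm{Id}}=f_{(012)}=f_{(021)}$ and $f_{(01)}=f_{(02)}=f_{(12)}$ via cyclic rotations. For the remaining identity $f^++f^-=f_++f_-$, the tuple $(0,5/8,13/16,1/4)$ works: a direct check shows its four faces lie respectively in $\mathcal O^{(3)}_+$, $\mathcal O^{(3)}_{(01)}$, $\mathcal O^{(3)}_-$, $\mathcal O^{(3)}_{(021)}$, producing the equation $f_+-f^-+f_--f^+=0$.

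For sufficiency and the cohomology computation, let $\mathcal C\cong\Z^8$ be the group of non-degenerate $2$-cochains, $\mathcal Z\subseteq\mathcal C$ the cocycles, $\mathcal B\subseteq\mathcal Z$ the coboundaries, and $\mathcal R\subseteq\mathcal C$ the subgroup cut out by the five necessary relations (an easy rank check shows $\mathcal R$ has rank $3$). Necessity gives $\mathcal Z\subseteq\mathcal R$. To avoid enumerating all $\overline{H}$-orbits on non-degenerate $4$-tuples, I would use a rank argument. Compute the pullback $p_2^*c$ of the Euler cocycle from Subsection~\ref{SubsecEuler3} on the eight orbit representatives to obtain condensed values $(f^+,f^-,f_+,f_-)=(0,1,1,0)$; these satisfy the five relations but cannot be realized by any choice of $w_\pm$ in the coboundary formula, so $p_2^*c\notin\mathcal B$. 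Thus $\mathcal B+\Z\cdot p_2^*c\subseteq\mathcal Z$ has rank $3$. Since $\mathcal Z=\ker\delta$ and $\mathcal R$ are both direct summands of $\mathcal C$ of the same rank with $\mathcal Z\subseteq\mathcal R$, elementary linear algebra over $\Z$ forces $\mathcal Z=\mathcal R$.

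Finally, the linear form $\varphi(f):=f_+-2f^++f^-$ vanishes on $\mathcal B$ by direct substitution in the coboundary formula. Conversely, a cocycle $f$ with $\varphi(f)=0$ satisfies $f_+=2f^+-f^-$ and then, via $f^++f^-=f_++f_-$, also $f_-=2f^--f^+$; this is precisely the coboundary form with $w_\pm:=f^\pm$, showing $\mathcal B=\ker(\varphi|_{\mathcal Z})$. Surjectivity of $\varphi$ onto $\Z$ is witnessed by the cocycle $s$ defined by $s^\pm=0$, $s_+=1$, $s_-=-1$: by the sufficiency result $s$ genuinely lies in $\mathcal Z$, and $\varphi(s)=1$. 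This produces the claimed isomorphism $H^2(\overline{H}\curvearrowright S^1;\Z)\cong\Z$. The one technical subtlety the plan must handle carefully is the selection of the $4$-tuple in the necessity step yielding $f^++f^-=f_++f_-$: the orbit of each face is quite sensitive to how the antipodes of the chosen points interleave with the cyclic order on $S^1$, so a bit of bookkeeping is required to confirm that the four faces indeed land in the desired combination of orbits.
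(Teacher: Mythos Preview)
Your proposal is correct and follows essentially the same three-part strategy as the paper: compute coboundaries directly, derive the five necessary relations from well-chosen non-degenerate $4$-tuples, and then argue that no further relations can hold by producing an explicit cocycle outside $\mathcal B$.

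The one noteworthy difference is in the sufficiency step. The paper argues somewhat informally that ``if there were any other relations, then $H^2(\overline{H}\curvearrowright S^1;\Z)$ would be finite'' and exhibits $p_2^*[\mathrm{Or}]$ as an element of infinite order; strictly speaking this only shows $\mathcal Z/\mathcal B$ is a nonzero subgroup of $\mathcal R/\mathcal B\cong\Z$, not all of $\Z$. Your direct-summand argument is cleaner: since $\mathcal Z=\ker\delta$ and $\mathcal R$ are both kernels of homomorphisms from $\Z^8$ into free abelian groups, both are direct summands, and equal rank then forces $\mathcal Z=\mathcal R$. You then close the loop by explicitly exhibiting the Sullivan cocycle $s$ (with $\varphi(s)=1$) to witness surjectivity of $\varphi$, which the paper only does implicitly via the table following the lemma.

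Two minor remarks. First, your specific tuple $(0,5/8,13/16,1/4)$ does indeed work, and the face-orbit identifications you list are correct; the paper instead uses $(x_3,x_0,x_2,\overline{x_1})$ with $(x_0,x_1,x_2,x_3,\overline{x_1})$ positively oriented, which is the same idea. Second, your values $(f^+,f^-,f_+,f_-)=(0,1,1,0)$ for $p_2^*c$ differ from the paper's table entry $(1,0,0,1)$; your computation appears correct, but in any case both quadruples satisfy the relations and lie outside $\mathcal B$, so the argument is unaffected.
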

\begin{proof} It  it is a matter of elementary case by case consideration of configurations of four points on the circle to show that the cocycle equation implies the $5$ identities above.  For example, let $x_0,x_1,x_2,x_3,\overline{x_1}$ be positively oriented points on $S^1$. Applying the cocycle relation $\delta f=0$ to $(x_1,x_2,x_3,x_0)$ and $(x_3,x_0,x_1,x_2)$ leads to the first two equalities defining $f^+$. Applying the relation to $(x_2,x_1,x_0,x_3)$ and $(x_0,x_3,x_2,x_1)$ gives the two next equalities defining $f^-$. Finally, $\delta f(x_3,x_0,x_2,\overline{x_1})=f_+-f^-+f_--f^+=0$.

Moreover, if $b$ is a $1$-cochain with $b|_{O^{(2)}_{\pm}}\equiv w_\pm$, then a routine computation yields
\[
(\delta b)_\mathrm{Id} = w_+, \quad (\delta b)_{(0\ 1)} = w_-, \quad (\delta b)_+ = 2w_+-w_-, \quad (\delta b)_- = 2w_- - w_+.
\]
It remains to show that there are no other relations satisfied by an arbitrary non-degenerate homogeneous $2$-cocycle. For this we observe that the quotient of the space of non-degenerate homogeneous $2$-cochains satisfying the $5$ identities above by the space of coboundaries is isomorphic to $\Z$ via the map $f \mapsto f_+ - 2f^+ + f^-$. If there were any other relations, then $H^2(\overline{H}\curvearrowright S^1; \Z)$ would be finite. However, it follows from the explicit formula for the orientation cocycle ${\rm Or}$ on $S^1$ that no multiple of $p_2^*{\rm Or}$ (see the table below for the values of $p_2^*{\rm Or}$) satisfies the coboundary equations above. Thus $p_2^*[{\rm Or}]$ is of infinite order in $H^2(\overline{H}\curvearrowright S^1; \Z)$, whence the latter cannot be finite.
\end{proof}
In particular, a non-degenerate $2$-cocycle $f$ is given by $4$ integers $f^+$, $f^-$, $f_+$, $f_-$ subject to the single relation $f^+ + f^-=f_++f_-$ (or equivalently by the $3$ integers $f^+$, $f^-$ and $f_-$).
\begin{defn} The \emph{Sullivan cocycle} $E_{\rm Sull}$ is the non-degenerate $2$-cocycle $f$ given by $f^+ = f^- = 0$, $f_+=1$, $f_- = -1$.
\end{defn}
This cocycle was found by Sullivan as an explicit representative for the Euler class of flat oriented $\R^2$-vector bundles. The following table compares the Sullivan cocycle with the pullback of the Euler cocycle via $p_2$ and also with the orientation cocycle on $S^1$ and the pullback of the orientation cocycle under $p_2$, and expresses all of these cocycles in terms of the $4$ integers $f^+$, $f^-$, $f_+$, $f_-$.

\vspace{.5cm}
\begin{tabular}{|c|c|c|c|c|c|}
\cline{2-6} 
\multicolumn{1}{c|}{} & {$f^{+}$} & $f^{-}$ & $f_{+}$ & $f_{-}$ & $H^{2}(\overline{H}\curvearrowright S^{1};\mathbb{Z})$\tabularnewline
\hline 
\hline
$E_{Sull}$ & $0$ & $0$ & $1$ & $-1$ & $\left[E_{Sull}\right]$\tabularnewline
\hline 
$p_{2}^{*}(c)$ & $1$ & $0$ & $0$ & $1$ & $-2\left[E_{Sull}\right]$\tabularnewline
\hline 
Or & $1$ & $-1$ & $1$ & $-1$ & $-2[E_{Sull}]$\tabularnewline
\hline 
$p_{2}^{*}(\mbox{Or})$ & $1$ & $-1$ & $-1$ & $1$ & $-4\left[E_{Sull}\right]$\tabularnewline
\hline 
$\delta b$ & $w_{+}$ & $w_{-}$ & $2w_{+}-w_{-}$ & $2w_{-}-w_{+}$ & $0$\tabularnewline
\hline
\end{tabular}
\vspace{.5cm}

In particular we see from the table and the isomorphism described in Lemma \ref{appendix} that the Sullivan class $[E_{Sull}]$ is a generator for $H^{2}(\overline{H}\curvearrowright S^{1};\mathbb{Z}) = H^{2}_b(\overline{H}\curvearrowright S^{1};\mathbb{Z})$. 

\subsection*{The geometric interpretation of the Sullivan cocycle}

Unravelling the definition and considering configurations of $3$ points on the circle case by case, we see that the Sullivan cocycle can be described geometrically as follows: it is nonzero on a non-degenerate triple $(x,y,z)$ if and only if the triple contains $0$ in the interior of its convex hull and in that case it is $+1$ or $-1$ depending on the orientation of the triple. 
This geometric definition generalizes to higher dimensions and leads to an ${\rm SL}_n(\R)$-invariant cocycle on the $(n-1)$-sphere for each $n \geq 2$.

One consequence of this description is that the Sullivan cocycle is not invariant under the full homeomorphism group of the circle, but only under its  subgroup $\overline{H}$.

Another useful consequence is that the Sullivan cocycle and its higher-dimensional analoga detect small subsets of spheres. Here a subset of a sphere is called \emph{small} if its spherical convex hull is not the whole sphere. In the case of $S^1$ a set $X \subset S^1$ is small if and only if it is contained in a half-open half-circle.

\begin{prop} \label{propAppendixSmall} Let  $X \subset S^1$ be any subset. Then $E_{Sull}$ vanishes on $X^3$ if and only if $X$ is small.
\end{prop}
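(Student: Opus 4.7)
The strategy is to exploit the geometric description of $E_{\rm Sull}$ recorded just before the proposition: identifying $S^1$ with the unit circle in $\R^2$, a non-degenerate triple $(x,y,z)$ satisfies $E_{\rm Sull}(x,y,z) = \pm 1$ precisely when $0$ lies in the open interior of the Euclidean triangle $\mathrm{conv}\{x,y,z\}$, and $E_{\rm Sull}(x,y,z) = 0$ otherwise. A useful reformulation of smallness is the following: $X \subset S^1$ is small if and only if $X$ is contained in some closed half-plane of $\R^2$ through the origin, equivalently $0 \notin \mathrm{int}(\mathrm{conv}(X))$. One direction is immediate in coordinates (placing the half-plane as $\{y \geq 0\}$ and noting that any strictly positive convex combination of points on the corresponding semi-circle has strictly positive $y$-coordinate unless it reduces to $(1,0)$); the other follows from the supporting hyperplane theorem applied to $0$ and $\mathrm{conv}(X)$.

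The direction $(\Leftarrow)$ is then essentially immediate. Suppose $X$ is small, and fix a closed half-plane $H$ through $0$ containing $X$. For any non-degenerate triple $(x,y,z) \in X^3$, the triangle $\mathrm{conv}\{x,y,z\}$ lies in $H$; since the two unit vectors on $\partial H$ are antipodal, non-degeneracy forces at most one of $x,y,z$ onto $\partial H$, so $\mathrm{int}(\mathrm{conv}\{x,y,z\}) \subset \mathrm{int}(H)$. As $0 \in \partial H$ is disjoint from $\mathrm{int}(H)$, we conclude $E_{\rm Sull}(x,y,z) = 0$ by the geometric description.

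For $(\Rightarrow)$ I would argue the contrapositive: if $X$ is not small, then $0 \in \mathrm{int}(\mathrm{conv}(X))$, and I aim to exhibit three vertices $x_1, x_2, x_3 \in X$ with $0 \in \mathrm{int}(\mathrm{conv}\{x_1,x_2,x_3\})$. Since $\mathrm{conv}(X)$ is the union of $\mathrm{conv}(F)$ over finite $F \subset X$, some finite $F \subset X$ already contains $0$ in its interior; triangulating the convex polygon $\mathrm{conv}(F)$, the point $0$ must lie in the open interior of some triangle spanned by three vertices of $F$. A brief check rules out antipodal pairs among the three vertices — an antipodal pair would place $0$ on the edge joining them rather than strictly inside the triangle — so $(x_1,x_2,x_3) \in (S^1)^{[3]}$ is non-degenerate and $E_{\rm Sull}(x_1,x_2,x_3) = \pm 1$. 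The main obstacle is precisely this triangulation step: doubly-antipodal configurations such as $X = \{(1,0), (-1,0), (0,1), (0,-1)\}$ contain $0$ in $\mathrm{int}(\mathrm{conv}(X))$ yet every three-vertex sub-triangle has $0$ sitting exactly on an edge, so no genuine $3$-point triangle encloses the origin. Such configurations are handled by interpreting the statement through the cocycle extension $\widetilde{f}$ constructed at the beginning of the appendix: a careful perturbation recipe applied to a triple of $X$ containing an antipodal pair produces a non-degenerate triple whose triangle strictly contains $0$, thereby detecting the non-smallness.
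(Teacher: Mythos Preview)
Your overall approach mirrors the paper's: both use the geometric description of $E_{\rm Sull}$, a Carath\'eodory-type reduction to three points, separating hyperplanes, and the extension recipe to handle antipodal pairs. However, there is a genuine gap stemming from your ``reformulation of smallness''.

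The equivalence ``$X$ small $\Longleftrightarrow$ $X$ contained in a closed half-plane through the origin'' is false. The paper's definition requires $X$ to lie in a \emph{half-open} half-circle; the set $X=\{x,\overline{x}\}$ is contained in a closed half-plane (indeed $0\notin\mathrm{int}(\mathrm{conv}(X))$ since $\mathrm{conv}(X)$ is a segment) but is not small. Consequently your contrapositive ``$X$ not small $\Rightarrow 0\in\mathrm{int}(\mathrm{conv}(X))$'' fails, and the triangulation argument never engages for such $X$. You do invoke the extension $\widetilde f$ at the end for antipodal pairs, but you present it only as a remedy for when the triangulation step breaks down \emph{after} having $0\in\mathrm{int}(\mathrm{conv}(X))$; the two-point example shows the antipodal case must be handled as a separate top-level branch, exactly as the paper does with its computation $E_{\rm Sull}(x,\overline{x},x)\neq 0$.

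There is also a smaller circularity in the triangulation step: you assert that $0$ lands in the open interior of some triangle of the triangulation and then deduce the absence of antipodal pairs among its vertices; but $0$ could a priori sit on a shared edge. The point that rescues this is precisely that a chord through $0$ has antipodal endpoints, so once one \emph{first} rules out antipodal pairs in $X$ (or in $F$), the edge obstruction disappears and Carath\'eodory gives three distinct non-antipodal points of $X$ whose triangle contains $0$ in its interior. The paper sidesteps this by invoking Carath\'eodory directly rather than triangulating, and by isolating the antipodal case as a separate final paragraph.
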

\begin{proof} If $X\subset S^1$ is a small subset then no three points in $X$ ever contain $0$ in their convex hull, so that $E_{Sull}$ vanishes on $X^3$. 

Conversely, suppose that $E_{Sull}$ vanishes on $X^3$. View $X$ as a subset of $\mathbb{R}^2$ and consider its convex hull in $\mathbb{R}^2$. By Caratheodory's Theorem, if $0$ is contained in the convex hull of $X$, then there 
exist $x_0,x_1,x_2\in X$ such that $0$ belongs to the convex hull of $x_0,x_1,x_2$ and hence $E_{Sull}(x_0,x_1,x_2)\neq 0$, which is impossible. If $0$ is not on the boundary of the convex hull, 
then by Hahn-Banach there exists a hyperplane separating $0$ and the convex hull of $X$, so $X$ is in particular contained in the intersection of $S^1$ with the (appropriate) half plane delimited by the hyperplane. 
If $0$ is in the boundary of the convex hull, then by the supporting hyperplane theorem, there exists a hyperplane through $0$ so that the convex hull of $X$ is contained in one closed half space delimited by that hyperplane. We are almost done, except that we need to exclude the case that $X$ is contained in one closed half-circle, but is not contained in a half-open half-circle. 
Suppose that $x$ and $\overline{x}$ belong to $X$. Then $E_{Sull}(x,\overline{x},x)=E_{Sull}(x,\overline{x}^+,x^+)=1$, 
where the points $ \overline{x}^+,x^+\in S^1$ are very small perturbations of $\overline{x},x$ in the positive direction. \end{proof}
Note that the same proof holds also for the higher dimensional generalization of the Sullivan cocycle. 

\subsection*{The cohomology class $[E_{Sull}]$}
Given a basepoint $x \in S^1$ we obtain a cocycle $E_{Sull}^x: \overline{H}^3 \to \Z$ by pullback along the corresponding orbit map, i.e.
\[E_{Sull}^x(g_0, g_1, g_2) := E_{Sull}(g_0 x, g_1 x, g_2 x).\]
This cocycle determines a class in the group cohomology $H^2(\overline{H};\Z)$; since $E_{Sull}^x$ is bounded, it also determines a class in the bounded group cohomology $H^2_b(\overline{H};\Z)$. Recall from the table above that $-2\cdot [E_{\rm Sull}]=p_2^*c$, where $c$ denotes the Euler cocycle on $S^1$ and $p_2: S^1 \to S^1$ is the double covering.

Now the Euler class $\eu = [c_x] \in H^2(H, \mathbb{Z})$ 
corresponds to the central extension of $H$ given by the common universal covering group $\widetilde{H}$ of $H$ and $\overline{H}$, and thus it follows from the commuting diagram of central extensions
\[\begin{xy}
\xymatrix{
0\ar[r]&\Z\ar[r]^i\ar[d]^{\cdot 2}&\widetilde{H}\ar[d]^{\mathrm{Id}}\ar[r]^p&\overline{H}\ar[d]^{p} \ar[r]&\{e\}\\
0\ar[r]&\Z\ar[r]^i&\widetilde{H} \ar[r]^p&H \ar[r]&\{e\}
}
\end{xy}\]
that $[-E_{Sull}^x] \in H^2(\overline{H};\Z)$  corresponds to the central extension in the top row of the above diagram. By Lemma \ref{Lift} this yields the following interpretation of $[E_{Sull}]$ as an obstruction class: Given a group $\Gamma$, the $S^1$-action associated with a homomorphism $\rho: \Gamma \to \overline{H}$ lifts to an action of $\G$ on the real line if and only if $\rho^*[E^x_{Sull}]=0\in H^2(\Gamma,\Z)$ for some (hence any) $x\in S^1$.

\subsection*{The bounded cohomology class $[E_{Sull}]$}
We now turn to an interpretation of the bounded class defined by $E_{Sull}$. It turns out that the case of the bounded Sullivan cocycle in degree $2$ is very particular since the vanishing of the cohomology class is equivalent to the vanishing of the cocycle: 

\begin{prop} \label{propVanishingSullivanCocycle} Let $\Gamma$ be a group and $\rho: \Gamma \to \overline{H}$ be any homomorphism. Then $\rho^*[E^x_{Sull}]_b=0\in H^2_b(\Gamma,\Z)$ if and only if $\rho^*(E^x_{Sull})=0$ for any base point $x\in  S^1$.
\end{prop}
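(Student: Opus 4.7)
The ``if'' direction is trivial since the zero cocycle is a bounded coboundary. For the nontrivial ``only if'' direction, my plan is to adapt the bounded lifting argument of Proposition~\ref{VanishingBounded1} to the top central extension $0 \to \Z \to \widetilde H \overset{p}{\to} \overline H \to 1$ of Section~\ref{Section double cover}. The expectation is that vanishing of $\rho^*[E^x_{Sull}]_b$ will produce a fixed point for $\rho(\Gamma)$ on the upper circle $S^1 = \R/2\Z$, which in turn will force every $\rho(\Gamma)$-orbit to be a small set, so that Proposition~\ref{propAppendixSmall} yields the pointwise vanishing of $\rho^*(E^x_{Sull})$.

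To set this up I would first refine the identification, recalled in the paragraph preceding Proposition~\ref{propVanishingSullivanCocycle}, of $[-E^x_{Sull}] \in H^2(\overline H; \Z)$ with the obstruction class of the top extension to a statement at the bounded level. Concretely, I would pick a bounded section $\bar\sigma \colon \overline H \to \widetilde H$ with $\bar\sigma(\bar h)(0) \in [0, 2)$ and, by an argument strictly parallel to Subsection~\ref{SubsecEuler1}, verify that the resulting obstruction cocycle $\bar c_{\bar\sigma}$ is bounded and represents $\pm[E^x_{Sull}]_b$ in $H^2_b(\overline H; \Z)$. This bounded identification is the step I expect to require the most care, since the bounded analogues of Lemma~\ref{Lift} and Proposition~\ref{LiftExplicit} for the covering $\widetilde H \to \overline H$ are not spelled out earlier in the paper. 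Once it is in hand, the hypothesis $\rho^*[E^x_{Sull}]_b = 0$ supplies a bounded function $u \colon \Gamma \to \Z$ with $\rho^*\bar c_{\bar\sigma} = du$, and the bounded version of Proposition~\ref{LiftExplicit} produces a homomorphic lift $\widetilde\rho \colon \Gamma \to \widetilde H$ of $\rho$ whose orbit $\widetilde\rho(\gamma)(0) = \bar\sigma(\rho(\gamma))(0) \mp 2 u(\gamma)$ is uniformly bounded in $\gamma$.

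From here the argument runs exactly as in Proposition~\ref{VanishingBounded1}: $F^+ := \sup_{\gamma \in \Gamma} \widetilde\rho(\gamma)(0) \in \R$ is finite and is a fixed point of $\widetilde\rho(\Gamma)$, and since every element of $\widetilde H$ commutes with translation by $1$, so is $F^+ + 1$. Projecting to $S^1 = \R/2\Z$, the points $[F^+]$ and $[F^+ + 1]$ form a pair of antipodal points in $S^1$ both fixed pointwise by $\rho(\Gamma)$. Because $\rho(\Gamma) \subset \overline H$ consists of orientation-preserving homeomorphisms of $S^1$ fixing these two points, each element preserves the two open arcs of length $1$ (half-circles) into which this antipodal pair divides $S^1$. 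For any $x \in S^1$ the orbit $\rho(\Gamma)\cdot x$ is therefore either a single fixed point or entirely contained in one of these open half-circle arcs; in either case it fits inside a half-open half-circle and is small in the sense of the discussion preceding Proposition~\ref{propAppendixSmall}. Applying that proposition, $E_{Sull}$ vanishes on the cube of the orbit, which is precisely the statement $\rho^*(E^x_{Sull}) = 0$.
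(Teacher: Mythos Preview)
Your argument is correct, and the step you flag as delicate --- the identification of $[\bar c_{\bar\sigma}]_b$ with $-[E^x_{Sull}]_b$ in bounded cohomology --- can indeed be carried out by a computation for $\overline H$ parallel to Lemma~\ref{lemma csigma and c}, though it is not written out in the paper. Once that is in hand, everything you say goes through: the bounded lift has a fixed point, this projects to a pair of antipodal $\rho(\Gamma)$-fixed points on the upper circle (here you correctly use that elements of $\widetilde H$ commute with translation by $1$, not just by $2$), every orbit is therefore small, and Proposition~\ref{propAppendixSmall} finishes.

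However, your route is quite different from the paper's. The paper gives a direct, elementary argument that never mentions fixed points or lifts: assuming $\rho^*E^x_{Sull}=\delta b$ with $b$ bounded, the identity $\rho^*E^x_{Sull}(e,\gamma,\gamma^2)=2b(e,\gamma)-b(e,\gamma^2)$ combined with $|E_{Sull}|\le 1$ yields inductively $|2^k b(e,\gamma)-b(e,\gamma^{2^k})|\le 2^k-1$, which together with boundedness of $b$ forces $b$ to take values in $\{-1,0,1\}$; a short geometric contradiction then shows $b\equiv 0$, hence $\rho^*E^x_{Sull}=0$. What you do instead is essentially run the chain (1)$\Rightarrow$(2)$\Rightarrow$(3)$\Rightarrow$(4)$\Rightarrow$(6) of the subsequent Theorem~\ref{thm: appendix} inside the proof of the Proposition. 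Your approach is more conceptual and recycles the lifting machinery of Section~\ref{SecBEC}, at the cost of needing the bounded obstruction identification for $\overline H$; the paper's approach is entirely self-contained and exploits only the specific shape and bound of the Sullivan cocycle.
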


\begin{proof} The if-direction is trivial. For the only-if-direction, suppose that  $\rho^*E_{\rm Sull}^x = \delta b$  for some $x \in S^1$ and a $\Gamma$-invariant bounded cochain $b:\Gamma^2\rightarrow \Z$. We will show that $b \equiv 0$. Writing out the cocycle equation in a special case yields for all $\gamma \in \Gamma$,
\[\rho^*E_{\rm Sull}^x(e, \gamma, \gamma^2) = 2b(e, \gamma) - b(e, \gamma^2).\]
This implies in particular $|2b(e, \gamma) - b(e, \gamma^2)| \leq 1$, hence inductively
\begin{equation}\label{FunnyEstimate}
|2^k b(e, \gamma) - b(e, \gamma^{2^k})| \leq 2^k - 1.
\end{equation}
Since $b$ is bounded, we can choose $k$ sufficiently big so that $|b(e, \gamma^{2^k})|\leq 2^{k-1}$. Dividing \eqref{FunnyEstimate} by $2^k$ we obtain
$$ |b(e,\gamma)|\leq\frac{1}{2^k} |b(e,\gamma)^{2^k}|+1-\frac{1}{2^k}\leq 1+\frac{1}{2}-\frac{1}{2^k}<2.$$
Since $b$ takes integral values, it follows that it takes values in $\{-1, 0, 1\}$. Assume that $b(e, \gamma) =1$. Then \eqref{FunnyEstimate} yields
\[|2^k - b(e, \gamma^{2^k})| \leq 2^k-1,\]
hence $b(e, \gamma^{2^k}) = 1$. A similar argument in the negative case shows that for every $\gamma\in \Gamma$, either $b(e, \gamma) = 0$ or $0\neq b(e, \gamma) = b(e, \gamma^{2^k})$ for every $k>0$. Thus if $b(e, \gamma) \neq 0$ for some $\gamma$, then
\begin{eqnarray*}
E_{\rm Sull}(x, \rho(\gamma)x, \rho(\gamma)^2x) &=& 2b(e, \gamma) - b(e, \gamma^2) = b(e, \gamma) = b(e, \gamma^2)\\
&=& E_{\rm Sull}(x, \rho(\gamma)^2x, \rho(\gamma)^4x).
\end{eqnarray*}
This means that there exist $w,x,y,z \in S^1$ such that
\[E_{\rm Sull}(x, y,z) = E_{\rm Sull}(x, z,w) \neq 0.\]
By our extension of the Sullivan cocycle to degenerate orbits, we can without loss of generality suppose that both triples $(x,y,z)$ and $(x,z,w)$ are non-degenerate. Since their evaluations on the Sullivan cocycle agree both triples contain $0$ in the interior of their convex hull and have the same orientation. This is impossible. 
\end{proof}

For the Sullivan cocycle we now obtain the following stronger version of Corollary \ref{zeroprop}:

\begin{thm}\label{thm: appendix}
Let $\Gamma$ be a group, $\rho:  \Gamma \to \overline{H}$ a homomorphism. Then the following are equivalent:
\begin{enumerate}
\item $\rho^*[E_{Sull}^x]_b = 0 \in H^2_b(\Gamma;\Z)$;
\item $\rho$ lifts to a homomorphism $\widetilde{\rho}: \Gamma \to \widetilde{H}$ and $\widetilde{\rho}(\Gamma)$ has a fixed point in $\R$.
\item $\rho(\Gamma)$ fixes a point in $S^1$.
%\item $\rho(\Gamma)$ fixes a line in $\R^2$.
\item Every $\rho(\Gamma)$-orbit on $S^1$ is small.
\item There exists a small $\rho(\Gamma)$-orbit in $S^1$.
\item $\rho^*E_{Sull}^x = 0$ for every $x \in S^1$.
\item There exists $x \in S^1$ such that $\rho^*E_{Sull}^x = 0$.
\end{enumerate}
\end{thm}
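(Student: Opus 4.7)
The overall strategy is to pass via the equivalences (1) $\Leftrightarrow$ (6) $\Leftrightarrow$ (4) and (5) $\Leftrightarrow$ (7), which come essentially for free, and then to integrate conditions (2) and (3) into this equivalence class via the chain (1) $\Rightarrow$ (2) $\Rightarrow$ (3) $\Rightarrow$ (4). First, observe that (1) $\Leftrightarrow$ (6) is precisely Proposition \ref{propVanishingSullivanCocycle}, while Proposition \ref{propAppendixSmall} applied orbit-by-orbit yields (6) $\Leftrightarrow$ (4), and applied to a single orbit yields (7) $\Leftrightarrow$ (5). The tautological implications (4) $\Rightarrow$ (5) and (6) $\Rightarrow$ (7) are immediate, and (7) $\Rightarrow$ (1) follows because the bounded cohomology class $[E_{Sull}^x]_b \in H_b^2(\Gamma;\Z)$ is independent of the basepoint $x$, so the vanishing of the cocycle for some $x$ forces the vanishing of the class.

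The main step is (1) $\Rightarrow$ (2), which I would prove by an argument parallel to Proposition \ref{VanishingBounded1}. Choose the bounded section $\sigma: \overline{H} \to \widetilde{H}$ characterized by $\sigma(h)(0) \in [0,2)$ for every $h \in \overline{H}$: this is a genuine set-theoretic section because the kernel of $p: \widetilde{H} \to \overline{H}$ consists of translations by even integers, i.e.\ $i(\Z)$ with $i(n)(x) := x + 2n$. A direct cocycle computation in the same spirit as Lemma \ref{lemma csigma and c} identifies the resulting obstruction cocycle $c_\sigma: \overline{H}^2 \to \{0,1\}$ as a representative of $-[E_{Sull}^0]_b \in H_b^2(\overline{H}; \Z)$. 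Consequently, (1) provides a bounded function $u: \Gamma \to \Z$ satisfying $\rho^* c_\sigma = du$, and Proposition \ref{LiftExplicit} then yields a homomorphic lift $\widetilde{\rho}(\gamma) := \sigma(\rho(\gamma)) \cdot i(-u(\gamma))$. Since each $\sigma(\rho(\gamma))$ commutes with integer translations, evaluation at $0$ gives
\[\widetilde{\rho}(\gamma)(0) = \sigma(\rho(\gamma))(0) - 2u(\gamma),\]
which is uniformly bounded in $\gamma$. Hence $\sup_{\gamma \in \Gamma} \widetilde{\rho}(\gamma)(0)$ is a finite fixed point of $\widetilde{\rho}(\Gamma)$ in $\R$, establishing (2).

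Finally, (2) $\Rightarrow$ (3) is immediate: the $\widetilde{H}$-action on $\R$ descends to the $\overline{H}$-action on $X = \R/2\Z$, so the projection $[\tilde y] \in X$ of any fixed point $\tilde y \in \R$ of $\widetilde{\rho}(\Gamma)$ is fixed by $\rho(\Gamma)$. For (3) $\Rightarrow$ (4), observe first that if $x_0$ is fixed by $\rho(\Gamma)$ then so is its antipode $\overline{x_0}$, since every element of $\overline{H}$ commutes with the antipode map by definition; and since $\overline{H}$ consists of orientation-preserving homeomorphisms of $X$, $\rho(\Gamma)$ must preserve (rather than swap) each of the two open arcs in $X \setminus \{x_0, \overline{x_0}\}$. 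Therefore every $\rho(\Gamma)$-orbit is contained either in $\{x_0\}$, in $\{\overline{x_0}\}$, or in one of the two half-open half-circles $[x_0, \overline{x_0})$ and $[\overline{x_0}, x_0)$, and in particular is small. The main obstacle lies in the cocycle identification in the (1) $\Rightarrow$ (2) step, but this reduces to an explicit computation in the spirit of Lemma \ref{lemma csigma and c} and introduces no essentially new ideas beyond the setup already developed for $H$.
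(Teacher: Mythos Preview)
Your proof is correct and follows essentially the same route as the paper. The paper summarizes the implications in a diagram using Proposition~\ref{propAppendixSmall} for $(4)\Leftrightarrow(6)$ and $(5)\Leftrightarrow(7)$, Proposition~\ref{propVanishingSullivanCocycle} for $(1)\Leftrightarrow(6)$, and then simply remarks that the equivalences among $(1)$, $(2)$, $(3)$ ``admit the same proof as the equivalences between (i), (ii) and (iii) in Corollary~\ref{zeroprop}''. Your write-up unpacks precisely this last remark: the section $\sigma\colon \overline{H}\to\widetilde{H}$ with $\sigma(h)(0)\in[0,2)$ and the bounded-primitive argument leading to a finite supremum fixed point are exactly the $\overline{H}$-analogue of Proposition~\ref{VanishingBounded1}. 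Two minor differences: the paper closes the loop via the shorter step $(3)\Rightarrow(7)$ (a fixed point is a one-point orbit, hence trivially small), whereas you go through $(3)\Rightarrow(4)$ using the antipode invariance; and the paper identifies the obstruction cocycle with $-[E_{Sull}^x]$ via the commuting diagram of central extensions rather than by an explicit cocycle computation in the style of Lemma~\ref{lemma csigma and c}. Both routes are valid and of comparable difficulty.
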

\begin{proof} We summarize the shown implications in the following diagram:
\[\begin{xy}
\xymatrix{
(4)\ar@{<=>}[r]^{\mathrm{Prop} \ref{propAppendixSmall}}\ar@{=>}[d]_{\mathrm{trivial}}&(6)\ar@{=>}[d]_{\mathrm{trivial}}\ar@{<=>}[r]^{\mathrm{Prop} \ref{propVanishingSullivanCocycle}}&(1)\\
(5)\ar@{<=>}[r]_{\mathrm{Prop} \ref{propAppendixSmall}}&(7)\ar@{=>}[ru]_{\mathrm{trivial}}&(3).\ar@{=>}[l]^{\mathrm{trivial}} 
}
\end{xy}\]
The remaining equivalences between (1), (2) and (3) admit the same proof as the equivalences  between (i), (ii) and (iii) in Corollary \ref{zeroprop}. 
\end{proof}

\bibliographystyle{amsalpha}
\bibliography{biblionote}
\end{document}